\numberwithin{equation}{section}
\newcommand{\bF}{\mathbb{F}}
\newcommand{\bH}{\mathbb{H}}
\newcommand{\bO}{\mathbb{O}}
\newcommand{\bR}{\mathbb{R}}
\newcommand{\bZ}{\mathbb{Z}}
\newcommand{\MTtheta}{\bold{MT \theta}}
\newcommand\lra{\longrightarrow}
\newcommand\lla{\longleftarrow}
\newcommand\Diff{\mathrm{Diff}}
\newcommand\hocolim{\operatorname*{hocolim}}
\newcommand\Ker{\operatorname*{Ker}}
\newcommand{\fr}{\mathrm{fr}}
\newcommand{\Fr}{\mathrm{Fr}}
\newcommand{\R}{\bR}
\newcommand{\Hom}{\mathrm{Hom}}
\newcommand{\Aut}{\mathrm{Aut}}
\newcommand{\IM}{\mathrm{Im}}
\renewcommand{\epsilon}{\varepsilon}
\newcommand{\Spin}{\mathrm{Spin}}
\mathchardef\ordinarycolon\mathcode`\:
\theoremstyle{plain}
\newtheorem{theorem}{Theorem}[section]
\newtheorem{proposition}[theorem]{Proposition}
\newtheorem{lemma}[theorem]{Lemma}
\newtheorem{corollary}[theorem]{Corollary}
\theoremstyle{definition}
\newtheorem{definition}[theorem]{Definition}
\newtheorem{construction}[theorem]{Construction}
\theoremstyle{remark}
\newtheorem{remark}[theorem]{Remark}
\newtheorem*{remark*}{Remark}
\title[Abelian quotients of mapping class groups]{Abelian quotients of mapping class groups of highly connected manifolds}
\author{S{\o}ren Galatius}
\email{galatius@stanford.edu}
\address{Department of Mathematics\\
	Stanford University\\
	Stanford CA, 94305}
\author{Oscar Randal-Williams}
\email{o.randal-williams@dpmms.cam.ac.uk}
\address{Centre for Mathematical Sciences\\
Wilberforce Road\\
Cambridge CB3 0WB\\
UK}
\subjclass[2010]{55N22, 57R15, 55P47, 55Q10, 57S05}
\begin{document}
\begin{abstract}
  We compute the abelianisations of the mapping class groups of the
  manifolds $W_g^{2n} = g(S^n \times S^n)$ for $n \geq 3$ and $g \geq
  5$.  The answer is a direct sum of two parts.  The first part arises
  from the action of the mapping class group on the middle homology,
  and takes values in the abelianisation of the automorphism group of
  the middle homology.  The second part arises from bordism classes of
  mapping cylinders and takes values in the quotient of the stable
  homotopy groups of spheres by a certain subgroup which in many cases
  agrees with the image of the stable $J$-homomorphism.  We relate its
  calculation to a purely homotopy theoretic problem.
\end{abstract}
\maketitle

\section{Introduction}

Let $W_g^{2n} = g(S^n \times S^n)$ denote the $g$-fold connected sum
and choose a fixed closed disc $D^{2n} \subset W_g^{2n}$. Let
$\Diff^+(W_g^{2n})$ be the topological group of orientation preserving
diffeomorphisms of $W_g^{2n}$, and $\Diff(W_g^{2n}, D^{2n})$ be the
subgroup of those diffeomorphisms which fix an open neighbourhood of
the disc. Define the \emph{mapping class groups}
$$\Gamma_{g,1}^n = \pi_0(\Diff(W_g^{2n}, D^{2n})) \quad\quad\quad \Gamma_{g}^n = \pi_0(\Diff^+(W_g^{2n})).$$
There is a homomorphism $\gamma : \Gamma_{g,1}^n \to \Gamma_g^n$,
which simply forgets that diffeomorphisms fix a disc. We will
construct two abelian quotients of these groups, one coming from
arithmetic properties of the intersection form of $W_g^{2n}$, and one
coming from a cobordism theoretic construction. Together, these will
give the abelianisation of either group.

\begin{construction}
  Recall that Wall \cite{Wall62} has constructed for each
  $(n-1)$-connected $2n$-manifold $W$ a certain quadratic form $Q_W$,
  which we shall describe later, whose underlying bilinear form is the intersection form on $H_n(W;\bZ)$. Diffeomorphisms of the manifold act by automorphisms
  of this quadratic form, so there is a group homomorphism
  \begin{equation*}
    \hat{f} : \Gamma_{g}^n \lra \mathrm{Aut}(Q_{W_g^{2n}}),
  \end{equation*}
  from which we can construct the map $f : \Gamma_{g}^n \to
  H_1(\mathrm{Aut}(Q_{W_g^{2n}}))$ to an abelian group. We will also
  write $\hat{f}$ for the composition $\hat{f} \circ \gamma :
  \Gamma_{g,1}^n \to \Gamma_g^n \to \mathrm{Aut}(Q_{W_g^{2n}})$, and
  similarly with $f$.
\end{construction}

\begin{construction}
  Let $\varphi \in \Diff(W_g^{2n}, D^{2n})$ be a diffeomorphism of
  $W_{g}^{2n}$ which is the identity on a fixed disc $D^{2n} \subset
  W_g^{2n}$. We may form the mapping torus
  \begin{equation*}
    T_\varphi = W_g^{2n}
  \times [0,1] / (x,0) \sim (\varphi(x),1),
  \end{equation*}
  which is a $(2n+1)$-dimensional manifold fibering over $S^1$, and
  contains an embedded $D^{2n} \times S^1$ given by the disc fixed by
  $\varphi$.  The $(n-1)$-connected manifold obtained by surgery along
  this embedded $D^{2n} \times S^1$ shall be denoted $T_\phi'$.  This
  construction is often called an open book. By obstruction theory, a
  map $\tau: T_\phi' \to BO$ classifying its stable normal bundle
  admits a lift $\ell: T_\phi' \to BO\langle n\rangle$, unique up to
  homotopy, where $BO\langle n \rangle \to BO$ denotes the
  $n$-connected cover. The pair $(T'_\varphi, \ell)$ represents an
  element of $\Omega_{2n+1}^{\langle n \rangle}$, the cobordism theory
  associated to the map $BO\langle n\rangle \to BO$, and one easily
  verifies that the function
  \begin{align*}
    t : \Gamma_{g,1}^n & \lra \Omega_{2n+1}^{\langle n \rangle}\\
    \varphi & \longmapsto [T_\varphi', \ell]
  \end{align*}
  is a group homomorphism.
\end{construction}

Our main theorem, proved in Sections
\ref{sec:low-dimens-cases}--\ref{sec:refin-mapp-torus} below, is that
these two homomorphisms combine to give the maximal abelian quotient
of the group $\Gamma_{g,1}^n$.

\begin{theorem}\label{thm:Main}
For all $n$ and $g$ (except we require $g \geq 2$ if $n=2$) the map
$$t \oplus f : \Gamma_{g,1}^n \lra \Omega_{2n+1}^{\langle n \rangle} \oplus H_1(\mathrm{Aut}(Q_{W_g^{2n}}))$$
is surjective, and for $n \neq 2$ and $g \geq 5$ it is the abelianisation. Furthermore, in this range
$$H_1(\mathrm{Aut}(Q_{W_g^{2n}})) \cong \begin{cases}
(\bZ/2)^2 & \text{$n$ even}\\
0 & \text{$n=1$, $3$ or $7$}\\
\bZ/4 & \text{otherwise}.
\end{cases}$$
\end{theorem}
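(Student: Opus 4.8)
I would prove the three parts --- surjectivity of $t\oplus f$, its identification with the abelianisation, and the formula for $H_1(\mathrm{Aut}(Q_{W_g^{2n}}))$ --- largely independently, and I begin with the last. Since $W_g^{2n}=g(S^n\times S^n)$ is stably parallelisable, any $x\in H_n(W_g^{2n};\bZ)$ is carried by an embedded $n$-sphere whose normal bundle, as an element of $\pi_n(BO(n))=\pi_{n-1}(\OO(n))$, lies in the kernel of stabilisation to $\pi_{n-1}(\OO)$; by the fibre sequence $\OO(n)\to\OO(n+1)\to S^n$ this kernel is generated by the clutching function $\partial\iota_n$ of $TS^n$. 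For $n$ even it is infinite cyclic, detected by the Euler number, so $Q_{W_g^{2n}}(x)=\tfrac12\lambda(x,x)$ is determined by the (even unimodular hyperbolic) intersection form; hence $\mathrm{Aut}(Q_{W_g^{2n}})=\OO_{g,g}(\bZ)$, the integral orthogonal group of the split even form of rank $2g$, whose abelianisation in the stable range is $(\bZ/2)^2$, detected by the determinant and the spinor norm. For $n$ odd the kernel is generated by $\partial\iota_n$, which vanishes exactly when $S^n$ is parallelisable, i.e.\ $n=1,3,7$, and has order $2$ otherwise; in the first case $Q_{W_g^{2n}}\equiv0$, so $\mathrm{Aut}(Q_{W_g^{2n}})=\mathrm{Sp}_{2g}(\bZ)$, which is perfect in the stable range, while in the second case $Q_{W_g^{2n}}$ becomes a $\bZ/2$-valued quadratic refinement of the mod-$2$ intersection form vanishing on a symplectic basis, hence of Arf invariant $0$, so $\mathrm{Aut}(Q_{W_g^{2n}})$ is the preimage $\mathrm{Sp}^q_{2g}(\bZ)$ of $\OO^+_{2g}(\bF_2)\subset\mathrm{Sp}_{2g}(\bF_2)$ under reduction mod $2$. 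That its abelianisation is $\bZ/4$ I would establish by analysing the Lyndon--Hochschild--Serre spectral sequence for $1\to\mathrm{Sp}_{2g}(\bZ)[2]\to\mathrm{Sp}^q_{2g}(\bZ)\to\OO^+_{2g}(\bF_2)\to1$, using the known first homology of the level-$2$ congruence subgroup together with $H_1(\OO^+_{2g}(\bF_2))=\bZ/2$ to bound $|H_1|$ by $4$, and a surjection onto $\bZ/4$ coming from the transformation law of even theta constants (equivalently, the Weil representation over $\bZ/8$).

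Next, surjectivity of $t\oplus f$. By Wall's classification and Kreck's realisation theorem every automorphism of $Q_{W_g^{2n}}$ is induced by a diffeomorphism fixing $D^{2n}$ --- this is the point at which $g\ge2$ is needed when $n=2$ --- so $\hat f$, and hence $f$, is surjective. For $t$: the subgroup of $\Gamma_{g,1}^n$ consisting of mapping classes supported in an auxiliary disc is $\pi_0\Diff(D^{2n},\partial)\cong\Theta_{2n+1}$, and on it $t$ is, up to sign, the forgetful map $\Theta_{2n+1}\to\Omega^{\langle n\rangle}_{2n+1}$, because the open book of a disc-supported diffeomorphism is the corresponding twisted sphere spliced into the trivial open book $\partial(W_{g,1}\times D^2)$, which bounds. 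This map is surjective: any class in $\Omega^{\langle n\rangle}_{2n+1}$ may be made $(n-1)$-connected by surgery below the middle dimension and then, framing embedded $n$-spheres by means of the $\langle n\rangle$-structure, surgered further to an $n$-connected --- hence homotopy-spherical --- representative. Since $f$ already hits the second summand, $t\oplus f$ is onto for all $g$ (and all $g\ge2$ when $n=2$).

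For the abelianisation statement with $n\neq2$ and $g\ge5$: the target of $t\oplus f$ is abelian, so the map factors through $H_1(\Gamma_{g,1}^n;\bZ)$, and since $B\Diff(W_g^{2n},D^{2n})\to B\Gamma_{g,1}^n$ induces an isomorphism on $\pi_1$ it induces one on $H_1$, identifying this with $H_1(B\Diff(W_g^{2n},D^{2n});\bZ)$. By the homological stability theorem of Galatius--Randal-Williams this group is independent of $g$ once $g\ge5$, and by their group-completion theorem it equals $\pi_1(MT\theta)$, where $\theta\colon BO\langle n\rangle(2n)\to BO(2n)$ is pulled back from the $n$-connected cover $BO\langle n\rangle\to BO$. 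Since the open book $T'_\varphi$ is obtained from the mapping torus $T_\varphi$ by surgery on a framed circle and so is bordant to it, $t$ corresponds under this identification to the natural map $\pi_1(MT\theta)\to\pi_1(\Sigma^{-2n}MO\langle n\rangle)=\Omega^{\langle n\rangle}_{2n+1}$; and an Atiyah--Hirzebruch/James spectral-sequence analysis of the cells of $BO\langle n\rangle(2n)$ in the relevant range shows that $\pi_1(MT\theta)$ splits as $\Omega^{\langle n\rangle}_{2n+1}$ plus a complementary group of order $|H_1(\mathrm{Aut}(Q_{W_g^{2n}}))|$ which is detected by $f$. Together with the surjectivity above, this exhibits $t\oplus f$ as the abelianisation.

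The crux, I expect, is the homotopy-theoretic analysis of $\pi_1(MT\theta)$: one must pin down the contribution of the low cells of $BO\langle n\rangle(2n)$, prove that it is split off the $\Omega^{\langle n\rangle}_{2n+1}$-summand with no cross terms, and identify it precisely with the arithmetically defined map $f$ rather than merely matching orders; interlaced with this are the group-homology computations $H_1(\mathrm{Sp}^q_{2g}(\bZ))=\bZ/4$ and $H_1(\OO_{g,g}(\bZ))=(\bZ/2)^2$ in the stable range. The intrinsic computation of $\Omega^{\langle n\rangle}_{2n+1}$ itself is, as the abstract signals, deliberately left open as a homotopy-theoretic problem.
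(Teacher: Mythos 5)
Your surjectivity argument and your reduction of $H_1(\Gamma_{g,1}^n;\bZ)$ to $\pi_1(\MTtheta_n)$ (homological stability plus the identification of the stable homology with that of $\Omega^\infty_0\MTtheta_n$) follow the same route as the paper, as does the identification of $\mathrm{Aut}(Q_{W_g^{2n}})$ with $\mathrm{O}_{g,g}(\bZ)$, $\mathrm{Sp}_{2g}(\bZ)$ or $\mathrm{Sp}^q_{2g}(\bZ)$; the abelianisation computations you sketch are quoted results in the paper (Charney's stability improved to $g\geq 5$, the theorem of Gritsenko--Hulek--Sankaran for $\mathrm{O}_{g,g}(\bZ)$, Johnson--Millson for the $\bZ/4$), and your sketches there are plausible but not proofs. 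The genuine gap is at the step you yourself flag as the crux and then simply assert: that ``an Atiyah--Hirzebruch/James spectral-sequence analysis'' shows $\pi_1(\MTtheta_n)$ splits as $\Omega_{2n+1}^{\langle n\rangle}$ plus a complement of order $|H_1(\mathrm{Aut}(Q_{W_g^{2n}}))|$ detected by $f$. A cell-level analysis of the stabilisation map $\MTtheta_n \to \Sigma^{-2n}\mathbf{MO}\langle n\rangle$ only yields the exact sequence
\begin{equation*}
\Omega_{2n+2}^{\langle n\rangle} \overset{\partial_*}\lra \pi_{2n+1}^s(SO/SO(2n)) \lra \pi_1(\MTtheta_n) \overset{t}\lra \Omega_{2n+1}^{\langle n\rangle} \lra 0,
\end{equation*}
with the middle group $\bZ/4$ ($n$ odd) or $(\bZ/2)^2$ ($n$ even) by Paechter, and everything then hinges on computing $\partial_*$ and on identifying the image of the low cells with the target of $f$. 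Neither is determined by the cell structure alone, and the uniform ``no cross terms'' splitting you posit is not even true in spirit for all $n$: for $n=3,7$ the map $\partial_*$ is surjective (the paper detects this geometrically with $[\bH P^2]$ and $[\bO P^2]$, whose odd Euler characteristic gives nonzero $\langle w_{2n+2},[W]\rangle$), so the low-cell contribution is killed and $t$ alone is injective there.

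For the remaining $n$, the paper does not prove $\partial_*=0$ homotopy-theoretically at all; it uses geometry you have not replaced: the image of $\Theta_{2n+1}$ in $\Gamma_{g,1}^n$ is central, acts trivially on homology (so lies in $\Ker(\hat f)$), and surjects onto $\Omega_{2n+1}^{\langle n\rangle}$, whence a diagram chase shows $f$ is already surjective on $\Ker(t)\subset H_1(\Gamma_{g,1}^n)$. This lower bound $|\Ker(t)|\geq 4$, played against Paechter's upper bound $|\pi_{2n+1}^s(SO/SO(2n))|=4$, simultaneously forces $\partial_*=0$ and shows that $f$ restricts to an isomorphism on the subgroup $\pi_{2n+1}^s(SO/SO(2n))\subset\pi_1(\MTtheta_n)$, splitting the sequence; this is exactly the identification ``detected by $f$ rather than merely matching orders'' that your proposal leaves open. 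Without this (or some substitute), injectivity of $t\oplus f$ on $H_1$ is unproved. Two further points: your abelianisation argument silently excludes $n=1$, where the stability and scanning theorems you invoke do not apply and the statement is instead Powell's theorem that the surface mapping class group is perfect; and the $g\geq 5$ independence of $H_1(\mathrm{Aut}(Q_{W_g^{2n}}))$ needs Charney-type stability, which you use implicitly (``in the stable range'') without justification.
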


We obtain the following table describing $H_1(\Gamma_{g,1}^n;\bZ)$ for small $n$, using known calculations of $\Omega_*^{\langle 3 \rangle} = \Omega_*^{\mathrm{Spin}}$ and $\Omega_*^{\langle 7 \rangle} = \Omega_*^{\mathrm{String}}$ (see \cite[p.\ 201]{MilnorSpin} for the former and \cite{Giambalvo} for the latter).
\begin{table}[h]
\centering
\caption{Abelianisations of $\Gamma_{g,1}^n$ for $g \geq 5$.}
\label{table:1}
\begin{tabular}{c|c c c c c c c}
$n$              & 1 & 2 & 3 & 4 & 5  & 6 & 7 \\
\hline
$H_1(\Gamma_{g,1}^n;\bZ)$ & 0 & $(\bZ/2)^2 \oplus ?$ & 0 & $(\bZ/2)^4$ & $\bZ/4$ & $(\bZ/2)^2 \oplus \bZ/3$ & $\bZ/2$  \\
\end{tabular}
\end{table}

In Section \ref{sec:Kreck} we compare our work with that of Kreck
\cite{KreckAut}, who has described the groups $\Gamma_{g,1}^n$ up to
extension problems. Using Theorem \ref{thm:Main} we are able to
resolve these extension problems when $n=6$ or $n \equiv 5 \mod 8$,
and hence give a complete description of these mapping class groups.

\subsection{The cobordism groups $\Omega_{2n+1}^{\langle n \rangle}$}

In light of Theorem \ref{thm:Main}, it is of interest to describe the cobordism group $\Omega_{2n+1}^{\langle n \rangle}$ in terms of more familiar objects. There is a homomorphism
$$\rho : \Omega_{2n+1}^{\fr} \lra \Omega_{2n+1}^{\langle n \rangle}$$
from framed cobordism obtained by simply remembering that a stably tangentially framed manifold in particular has a $BO\langle n \rangle$-structure. The cobordism theoretic interpretation of the $J$-homomorphism
$$J : \pi_{2n+1}(SO) \lra \pi_{2n+1}^s = \Omega_{2n+1}^{\fr}$$
is that it sends a map $f: S^{2n+1} \to SO$ to the stably framed
manifold obtained by taking the $(2n+1)$-sphere with its
usual---bounding---stable framing, and changing the framing using
$f$. The resulting stable framing need not extend over $D^{2n+2}$, but
the $BO\langle n \rangle$-structure does always extend (as the map
$BO\langle n\rangle \to BO$ is $n$-co-connected), so $\rho \circ
J$ is trivial. Thus there is an induced map
$$\rho' : \mathrm{Coker}(J)_{2n+1} \lra \Omega_{2n+1}^{\langle n \rangle}.$$
It follows from work of Stolz that this map is an isomorphism in many cases.

\begin{theorem}[Stolz]\label{thm:RhoIso}
The map $\rho'$ is surjective, and is an isomorphism if either
\begin{enumerate}[(i)]
\item $n+1 \equiv 2 \mod 8$ and $n+1 \geq 18$,

\item $n+1 \equiv 1 \mod 8$ and $n+1 \geq 113$,

\item $n+1 \not\equiv 0,1,2,4 \mod 8$.
\end{enumerate}
\end{theorem}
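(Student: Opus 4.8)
The plan is to pass to Thom spectra and extract everything from a single cofibre sequence. By Pontryagin--Thom, $\Omega_{2n+1}^{\langle n\rangle}\cong\pi_{2n+1}(MO\langle n\rangle)$, where $MO\langle n\rangle$ is the Thom spectrum of the pullback of the universal stable bundle along $BO\langle n\rangle\to BO$; it is connective, its unit $\bS\to MO\langle n\rangle$ induces $\rho$ on $\pi_{2n+1}$, and, writing $C$ for the cofibre of the unit, the Thom isomorphism gives $\widetilde H_*(C)\cong\widetilde H_*(BO\langle n\rangle)$ for $*\geq 2$. In particular $C$ is $n$-connected, with $H_{n+1}(C)\cong\pi_{n+1}(BO)$. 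The cofibre sequence $\bS\to MO\langle n\rangle\to C\to\Sigma\bS$ then produces an exact sequence
\begin{equation*}
\pi_{2n+2}(C)\xrightarrow{\partial}\pi_{2n+1}^s\xrightarrow{\rho}\Omega_{2n+1}^{\langle n\rangle}\longrightarrow\pi_{2n+1}(C)\xrightarrow{\partial}\pi_{2n}^s ,
\end{equation*}
and the theorem is equivalent to: (a) $\partial\colon\pi_{2n+1}(C)\to\pi_{2n}^s$ is injective, which gives surjectivity of $\rho$ and hence of $\rho'$; and (b) the image of $\partial\colon\pi_{2n+2}(C)\to\pi_{2n+1}^s$ is exactly $\mathrm{im}(J)_{2n+1}$, which gives injectivity of $\rho'$. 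The inclusion $\mathrm{im}(J)\subseteq\mathrm{im}(\partial)$ in (b) is precisely the identity $\rho\circ J=0$ already recorded above.

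Since $C$ is $n$-connected, the groups $\pi_{2n+1}(C)$ and $\pi_{2n+2}(C)$ lie at the edge of the metastable range, and I would compute them by running the Atiyah--Hirzebruch (or Adams) spectral sequence with $E^2$-term $H_p(BO\langle n\rangle;\pi_q^s)$ converging to $\pi_{p+q}(C)$ for $p+q\leq 2n+2$. The homology of $BO\langle n\rangle$ in the bottom range $n+1\leq *\leq 2n+2$ is controlled by Bott periodicity---only $\pi_{n+1}(BO),\dots,\pi_{2n+2}(BO)$ contribute, and cup products intrude only at the very top---together with the low-dimensional cohomology operations that are the $k$-invariants of the Whitehead tower $\cdots\to BO\langle n\rangle\to BO\langle n-1\rangle\to\cdots$; these operations also govern the non-trivial differentials. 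The residue of $n$ modulo $8$ determines the shape of the answer: in case (iii) one has $\pi_{n+1}(BO)=0$, so $C$ is actually $(n+1)$-connected and its bottom cell is pushed still higher; in cases (i) and (ii) one has $\pi_{n+1}(BO)\cong\bZ/2$, so $C$ begins with a mod-$2$ cell in degree $n+1$ whose interaction with $\eta,\eta^2\in\pi_*^s$ must be tracked carefully.

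For the surjectivity assertion (a), I would show that every class in $\pi_{2n+1}(C)$ is supported near the bottom cells of $BO\langle n\rangle$ (geometrically, $C$ parametrises differences of Thom classes), and that $\partial$ applied to such a class is computed by a Hopf- or $e$-invariant that one verifies is non-zero using the explicit bottom attaching map of $MO\langle n\rangle$---equivalently the first non-trivial $k$-invariant of $BO\langle n\rangle$, a Bockstein of a Steenrod square. I expect this step to go through uniformly in all residue classes.

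The injectivity assertion (b) is the main obstacle. One must compute $\partial$ on each class of $\pi_{2n+2}(C)$ that survives the spectral sequence and check that its image in $\pi_{2n+1}^s$ is detected, modulo $\mathrm{im}(J)$, by a vanishing $e$-invariant; this uses Adams' determination of $\mathrm{im}(J)$ and $\mathrm{Coker}(J)$ and the identification of the relevant bordism invariant with the $e$-invariant. The dimension hypotheses $n+1\geq 18$ in (i) and $n+1\geq 113$ in (ii) enter here and nowhere else: for small $n$ with $n+1\equiv 2$ or $n+1\equiv 1\pmod 8$ there are sporadic classes of $\pi_{2n+1}^s$ (connected with the Kervaire invariant and with the $\mu$-family) that are hit by $\partial$ but do not lie in $\mathrm{im}(J)$, and the argument succeeds only once $n$ is large enough for these exceptional classes to have disappeared. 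Controlling exactly when this occurs---and hence producing the stated bounds---is the delicate heart of the matter.
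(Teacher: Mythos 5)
Your reduction is set up correctly: the cofibre sequence $\mathbf{S} \to \mathbf{MO}\langle n\rangle \to C$ of the unit map does identify surjectivity of $\rho$ with injectivity of $\partial\colon \pi_{2n+1}(C)\to\pi_{2n}^s$, and injectivity of $\rho'$ with the statement that $\mathrm{Im}(\partial\colon \pi_{2n+2}(C)\to\pi_{2n+1}^s)$ equals $\mathrm{Im}(J)_{2n+1}$ (this $C$ is what the paper calls $\overline{\mathbf{MO}\langle n\rangle}$, and your exact sequence is the one displayed in Section 5). But from there the proposal is a plan, not a proof, and the proposed engine cannot run as stated. The groups $\pi_{2n+1}(C)$ and $\pi_{2n+2}(C)$ are not computable by an Atiyah--Hirzebruch or Adams spectral sequence ``at the edge of the metastable range'': since $C$ has cells in every degree from $n+1$ up to $2n+2$, the $E^2$-term involves $H_p(BO\langle n\rangle;\pi^s_q)$ with $q$ ranging up to about $n$, i.e.\ stable stems that are unknown in general. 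Likewise the claim that every class of $\pi_{2n+1}(C)$ is ``supported near the bottom cells'' is unjustified (and false as a statement about the filtration), so step (a) is not established by your e-invariant argument. The paper gets surjectivity by an entirely different, geometric route: by Kervaire--Milnor surgery below the middle dimension (Lemma \ref{lem:Theta}(\ref{it:Theta:2})), every class of $\Omega^{\langle n\rangle}_{2n+1}$ contains an $n$-connected, hence $2n$-connected, representative, i.e.\ a homotopy sphere, which is stably framed; this proves $\rho$ surjective uniformly in $n$ with no spectral sequence at all.

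For step (b) you explicitly defer ``the delicate heart of the matter,'' but that heart \emph{is} the theorem: it is Stolz's result, and the paper does not reprove it but imports it. Stolz's method is not to compute $\pi_{2n+2}(C)$ but to compare $C=\overline{\mathbf{MO}\langle n\rangle}$ with $\mathbf{ko}\langle n\rangle$ via a $KO$-Thom-class map and to show that the fibre $A[n+1]$ is $(2n+1)$-connected with $\pi_{2n+2}(A[n+1])$ equal to $\bZ$, $\bZ/2$ or $0$ according to $n+1 \bmod 8$ (Theorem \ref{thm:Stolz1}); since $\overline{J}$ splits off the $\mathbf{ko}$-part and $\partial\circ\overline{J}=J$, one is reduced to deciding whether the image of $\pi_{2n+2}(A[n+1])$ dies in $\mathrm{Coker}(J)_{2n+1}$, which is exactly where the congruence conditions and the bounds $n+1\geq 18$, $n+1\geq 113$ arise (Theorem \ref{thm:Stolz2}, citing \cite[Theorem B]{Stolz}); in the unresolved case $n+1\equiv 0 \bmod 4$ the obstruction is the plumbing-generated exotic sphere of Conjecture A. Your attribution of the bounds to sporadic classes ``disappearing'' from $\pi_{2n+1}^s$ is a guess and does not reflect this mechanism. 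So the proposal contains a genuine gap: both the surjectivity argument and, more seriously, the identification of $\mathrm{Im}(\partial)$ with $\mathrm{Im}(J)$ under hypotheses (i)--(iii) are asserted rather than proved, and the latter cannot be obtained by the spectral-sequence computation you outline.
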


In the cases not covered by this theorem, the kernel of $\rho'$ is at
most $\bZ/2$ if $n+1 \equiv 1,2 \mod 8$, and cyclic if $n+1 \equiv 0
\mod 4$. We give more detailed information in Section \ref{sec:Filtr}.

\subsection{Closed manifolds}

We can also use Theorem \ref{thm:Main} to calculate the abelianisation
of the mapping class group $\Gamma_g^n$, of orientation preserving
diffeomorphisms of the closed manifolds $W_g^{2n}$, because of the
following result of Kreck.

\begin{lemma}[Kreck]\label{lem:Closed}
  The map $\gamma: \Gamma_{g,1}^n \to \Gamma_{g}^n$ is an isomorphism
  for $n \geq 3$.
\end{lemma}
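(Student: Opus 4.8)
The plan is to study the fibration obtained by restricting diffeomorphisms to the distinguished disc. By the isotopy extension theorem, restriction is a locally trivial fibration $\Diff^+(W_g^{2n}) \to \Emb^+(D^{2n}, W_g^{2n})$ onto the space of orientation-preserving embeddings, with fibre $\Diff(W_g^{2n}, D^{2n})$; and taking $1$-jets at the centre of the disc gives a homotopy equivalence $\Emb^+(D^{2n}, W_g^{2n}) \simeq \Fr^+(W_g^{2n})$ with the oriented frame bundle. Since $W_g^{2n}$ and $\SO(2n)$ are connected, so is $\Fr^+(W_g^{2n})$, and the long exact sequence of homotopy groups immediately gives that $\gamma$ is surjective on $\pi_0$. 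The same sequence shows that $\gamma$ is injective if and only if the connecting map $\partial\colon \pi_1(\Fr^+(W_g^{2n})) \to \pi_0(\Diff(W_g^{2n}, D^{2n})) = \Gamma_{g,1}^n$ is zero, equivalently if and only if $\pi_1(\Diff^+(W_g^{2n})) \to \pi_1(\Fr^+(W_g^{2n}))$ is surjective.

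To identify $\pi_1(\Fr^+(W_g^{2n}))$ I would use the fibration $\SO(2n) \to \Fr^+(W_g^{2n}) \to W_g^{2n}$. This is where $n \geq 3$ enters: since $W_g^{2n}$ is $(n-1)$-connected, $\pi_1(W_g^{2n}) = \pi_2(W_g^{2n}) = 0$, so $\pi_1(\Fr^+(W_g^{2n})) \cong \pi_1(\SO(2n)) \cong \bZ/2$, generated by a loop $\rho$ contained in a fibre $\SO(2n)$. Injectivity of $\gamma$ thus reduces to showing that $\rho$ lifts to a loop of diffeomorphisms of $W_g^{2n}$, i.e.\ that there is a loop of diffeomorphisms fixing some point $x$ whose derivative at $x$ is the generating loop of $\pi_1(\SO(2n))$.

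This last point is the crux, and I would establish it by induction on $g$. For $g=0$ one has $W_0^{2n} = S^{2n}$, and a loop of rotations of $S^{2n}$ in two of the ambient coordinates fixes the poles on the complementary axis and differentiates, at a pole, to the generating loop of $\pi_1(\SO(2n))$; equivalently, the isometric action $\SO(2n+1) \cong \Fr^+(S^{2n})$ splits the fibration over $S^{2n}$. For the inductive step, write $W_g^{2n} = (S^n \times S^n) \# W_{g-1}^{2n}$. On the new summand, let $\SO(n) \subset \SO(n+1)$ act on the first $S^n$-factor only; this action fixes an $S^n$, and over a point $x$ of it the resulting loop of diffeomorphisms of $S^n \times S^n$ has derivative at $x$ equal to (a generator of $\pi_1(\SO(n))) \oplus \id$, which generates $\pi_1(\SO(2n))$ because $\SO(n) \hookrightarrow \SO(2n)$ as one block is $\pi_1$-surjective for $n \geq 3$. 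Near a second fixed point $c$ this loop acts by a linear isometry, so it preserves a small round ball about $c$ used to form the connected sum and restricts on the gluing sphere to a generating loop of $\pi_1(\SO(2n))$. After linearising the inductively constructed loop on $W_{g-1}^{2n}$ near its fixed point, it likewise restricts on a gluing sphere to a generating loop; since any two such loops in $\SO(2n)$ are conjugate by a suitably orientation-reversing element of $\OO(2n)$, the two loops can be matched along the connected-sum sphere and spliced to the required loop on $W_g^{2n}$.

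The main obstacle is precisely this construction of an actual \emph{loop} of diffeomorphisms. The naive attempt---rotate inside a ball and damp the rotation to the identity near its boundary---produces only a path of diffeomorphisms whose endpoint is the nontrivial ``shell twist'' in the ball; it is exactly this twist (the image of $\rho$ under $\partial$) that one must kill. The argument circumvents this by exploiting that $S^{2n}$ and the summands $S^n \times S^n$ carry honest compact group actions realising the relevant loops of frames, which can then be matched across connected-sum spheres. The remaining ingredients---local triviality of the restriction fibration, the equivalence $\Emb^+(D^{2n},-) \simeq \Fr^+(-)$, and the possibility of linearising a loop of diffeomorphisms near a fixed point---are standard.
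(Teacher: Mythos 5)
Your reduction is the same as the paper's: both proofs run the long exact sequences of $\Diff(W_g^{2n},D^{2n}) \to \Diff^+(W_g^{2n}) \to \Fr^+(W_g^{2n})$ and $\SO(2n) \to \Fr^+(W_g^{2n}) \to W_g^{2n}$ to get surjectivity and a kernel of order at most $2$, generated by $\partial(\rho)$ with $\rho$ the fibre loop. Where you differ is in killing $\partial(\rho)$. The paper does not construct anything: it quotes Kreck, who identifies the potential kernel element with an exotic sphere $\Sigma_{W_g^{2n}} \in \Theta_{2n+1}$ and shows it vanishes because $W_g^{2n}$ bounds the parallelisable handlebody $\natural^g S^n \times D^{n+1}$. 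You instead lift $\rho$ directly: the circle subgroup of $\SO(n)$ acting on the first factor of $S^n \times S^n$ is an isometric action whose isotropy representation at a fixed point is (2-plane rotation)$\oplus\mathrm{id}$, hence generates $\pi_1(\SO(2n))$ since $\pi_1(\SO(n)) \to \pi_1(\SO(2n))$ is an isomorphism for $n \geq 3$, and you propagate this over the connected sum by equivariant splicing at fixed points. This is a genuinely different, self-contained and elementary argument, but it is special to $\#^g(S^n\times S^n)$, whereas Kreck's invariant $\Sigma_W$ applies to general $(n-1)$-connected $2n$-manifolds; the paper's route also keeps the proof to a citation.

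Two places in your splicing need tightening, though both are fixable within your framework. First, it is not true that any two loops generating $\pi_1(\SO(2n))$ are pointwise conjugate in $\OO(2n)$ (a single 2-plane rotation loop and a simultaneous rotation in three 2-planes both generate but are not conjugate); what you actually need, and what your construction provides, is that both gluing loops are the \emph{standard} 2-plane rotation loop. So the induction hypothesis should carry a normal form: the loop is an isometric circle action which, in an exponential chart at a designated fixed point disjoint from the marked one, is exactly (2-plane rotation)$\oplus\mathrm{id}$. With that, the equivariant connected sum at fixed points with intertwined isotropy representations is standard, and a reflection orthogonal to the rotation plane supplies the orientation-reversing matching. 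Second, "linearising the inductively constructed loop near its fixed point" is not automatic for an arbitrary loop of diffeomorphisms; for your loops it is either unnecessary (if the normal form above is carried along, since the gluing regions can be kept rotation-invariant) or follows from Bochner linearisation because they are genuine circle actions; alternatively one can absorb any homotopy between the two boundary loops of rotations into a collar $S^{2n-1}\times[0,1]$. With these adjustments your argument is complete and gives an independent proof of Kreck's lemma for these particular manifolds.
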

\begin{proof}
  The homotopy fiber sequences
  \begin{equation*}
    \Fr^+(W_g^{2n}) \lra B\Diff(W_g^{2n}, D^{2n}) \lra B\Diff^+(W_g^{2n})
  \end{equation*}
  and
  \begin{equation*}
    SO(2n) \lra \Fr^+(W_g^{2n}) \lra W_g^{2n}
  \end{equation*}
  induce long exact sequence in homotopy groups, from which it is easy
  to see that $\Gamma_{g,1}^n \to \Gamma_g^n$ is surjective with
  kernel either trivial or $\bZ/2$, as long as $n \geq 3$.

  Kreck proves that the kernel is in fact trivial: Combine the
  discussion at the bottom of page 657 of \cite{KreckAut} with the
  fact that the manifolds $W_g^{2n}$ bound the parallelisable
  manifolds $\natural^g S^n \times D^{n+1}$, so the element
  $\Sigma_{W_g^{2n}}$ is trivial by Lemma 3b of that paper.
\end{proof}

\subsection{Perfection}

Recall that a group is called \emph{perfect} if it is equal to its
derived subgroup, or equivalently if its abelianisation is
trivial. Table \ref{table:1} shows that $\Gamma_{g,1}^n$ (or $\Gamma_g^n$, by
Lemma \ref{lem:Closed}) is perfect for $n=1$ or
$n=3$ and $g \geq 5$, but the fact that
$H_1(\mathrm{Aut}(Q_{W_g^{2n}}))$ is trivial only for $n=1,3,7$ and
the fact that $\Omega_{15}^{\langle 7 \rangle} \neq 0$ means that
these are the only examples.

\begin{corollary}\label{cor:perfection}
  For $g \geq 5$, the groups $\Gamma_{g,1}^n$ and $\Gamma_{g}^n$ are
  perfect if and only if $n$ is 1 or 3.
\end{corollary}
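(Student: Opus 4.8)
The plan is to use the elementary fact that a group is perfect exactly when its first integral homology vanishes, and then to read the answer off Theorem~\ref{thm:Main}. Two preliminary reductions let the closed case follow from the bounded one. First, $\gamma\colon \Gamma_{g,1}^n\to\Gamma_g^n$ is surjective for every $n\ge 1$: in the first homotopy fibre sequence used in the proof of Lemma~\ref{lem:Closed} the fibre $\Fr^+(W_g^{2n})$ is connected (since $W_g^{2n}$ and $SO(2n)$ are), so the long exact sequence forces surjectivity, and hence $H_1(\Gamma_g^n;\bZ)$ is a quotient of $H_1(\Gamma_{g,1}^n;\bZ)$. Second, by Lemma~\ref{lem:Closed} this surjection is even an isomorphism once $n\ge 3$. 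Thus it suffices to compute $H_1(\Gamma_{g,1}^n;\bZ)$, and then to deal separately with $\Gamma_g^n$ only in the low cases $n=1,2$ not covered by Lemma~\ref{lem:Closed}.

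Next I would invoke Theorem~\ref{thm:Main}: for $n\ne 2$ and $g\ge 5$ it identifies $H_1(\Gamma_{g,1}^n;\bZ)$ with $\Omega_{2n+1}^{\langle n\rangle}\oplus H_1(\mathrm{Aut}(Q_{W_g^{2n}}))$, and the stated formula for the second summand shows it vanishes precisely when $n\in\{1,3,7\}$. For $n\ge 4$ with $n\ne 7$ the second summand is already nonzero, so $\Gamma_{g,1}^n$ — and, by Lemma~\ref{lem:Closed}, $\Gamma_g^n$ — is not perfect. For $n=7$ the second summand vanishes, but the first is $\Omega_{15}^{\langle 7\rangle}=\Omega_{15}^{\mathrm{String}}$, which is nonzero (it is $\bZ/2$; see Table~\ref{table:1} and the surrounding discussion of Giambalvo's computation), so again neither group is perfect. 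For $n=1$ the first summand is $\Omega_3^{\langle 1\rangle}=\Omega_3^{\mathrm{SO}}=0$ and for $n=3$ it is $\Omega_7^{\langle 3\rangle}=\Omega_7^{\mathrm{Spin}}=0$, so $H_1(\Gamma_{g,1}^n;\bZ)=0$ in both cases; hence $\Gamma_{g,1}^n$ is perfect, and then so is $\Gamma_g^n$, being a quotient of it. At this point the corollary is settled for all $n\ge 3$ and for $n=1$.

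The remaining case is $n=2$, where Theorem~\ref{thm:Main} gives only surjectivity of $t\oplus f$ (valid for $g\ge 2$, hence for $g\ge 5$). That surjectivity alone yields an epimorphism $H_1(\Gamma_{g,1}^2;\bZ)\twoheadrightarrow \Omega_5^{\langle 2\rangle}\oplus H_1(\mathrm{Aut}(Q_{W_g^4}))$; since $\Omega_5^{\langle 2\rangle}=\Omega_5^{\mathrm{Spin}}=0$ this is just the surjection $f$, and the target $H_1(\mathrm{Aut}(Q_{W_g^4}))\cong(\bZ/2)^2$ is nonzero (the $n$ even computation of $H_1(\mathrm{Aut}(Q))$ applies here — it is the identification of the \emph{full} abelianisation that needs $n\ne 2$ — and this is the $(\bZ/2)^2$ recorded in Table~\ref{table:1}). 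So $\Gamma_{g,1}^2$ is not perfect. Because $f$ factors through $\gamma$, which is surjective, $f$ also surjects $\Gamma_g^2$ onto $H_1(\mathrm{Aut}(Q_{W_g^4}))\ne 0$, so $\Gamma_g^2$ is not perfect either; equivalently, $\mathrm{Aut}(Q_{W_g^4})$ is a non-perfect quotient of $\Gamma_g^2$ via $\hat f$.

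The argument is essentially pure bookkeeping — this is a corollary of Theorem~\ref{thm:Main}, and the only genuinely external inputs are the classical facts $\Omega_3^{\mathrm{SO}}=0$, $\Omega_7^{\mathrm{Spin}}=0$, and $\Omega_{15}^{\mathrm{String}}\ne 0$. If there is any ``hard part'' at all, it is simply making the $n=2$ case go through using only the surjectivity half of Theorem~\ref{thm:Main} (and the factorisation of $f$ through $\gamma$) rather than a full abelianisation computation; I do not anticipate any further difficulty.
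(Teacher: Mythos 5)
Your proof is correct and takes essentially the same route as the paper: perfection is detected by the vanishing of $H_1$, which Theorem~\ref{thm:Main} (together with the computation of $H_1(\mathrm{Aut}(Q_{W_g^{2n}}))$, Lemma~\ref{lem:Closed}, and the bordism facts $\Omega_3^{\langle 1\rangle}=\Omega_7^{\langle 3\rangle}=0$, $\Omega_{15}^{\langle 7\rangle}\neq 0$) reduces to the stated dichotomy. Your explicit treatment of $n=2$ via the surjectivity half of Theorem~\ref{thm:Main} and of the closed groups in low dimensions via surjectivity of $\gamma$ merely spells out details the paper leaves implicit.
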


If we denote by $\mathring{W}^{2n}_{g,1}$ the complement of the chosen
disc $D^{2n}$ in $W_{g}^{2n}$, then the group $\Diff(W_g^{2n},
D^{2n})$ is isomorphic to $\Diff_c(\mathring{W}^{2n}_{g,1})$, the
group of compactly supported diffeomorphisms of
$\mathring{W}^{2n}_{g,1}$. Thurston \cite{Thurston} has proved that
for any manifold $M$ without boundary the identity component
$\Diff_c(M)_0^\delta$, considered as a discrete group, is perfect (in
fact, it is simple). Thus the extension of discrete groups
$$1 \lra \Diff_c(\mathring{W}^{2n}_{g,1})^\delta_0 \lra
\Diff(W_g^{2n}, D^{2n})^\delta \lra \Gamma_{g,1}^n \lra 1$$ shows that
the discrete group $\Diff(W_g^{2n}, D^{2n})^\delta$ is perfect if and
only $\Gamma_{g,1}^n$ is, and more generally that the abelianisation
of the discrete group $\Diff(W_g^{2n}, D^{2n})^\delta$ is also
described by Theorem \ref{thm:Main}. Similarly, the abelianisation of
the discrete group $\Diff^+(W_g^{2n})^\delta$ is also described by
Theorem \ref{thm:Main}. See \cite{Nariman} for more
information about the homology of $\Diff(W_g^{2n}, D^{2n})^\delta$.

\subsection{Acknowledgements}

S.\ Galatius was partially supported by National Science Foundation
grants DMS-1105058 and DMS-1405001, O.\ Randal-Williams was supported
by the Herchel Smith Fund, and both authors were supported by ERC
Advanced Grant No.\ 228082 and the Danish National Research Foundation
through the Centre for Symmetry and Deformation.


\section{Wall's quadratic form}

The fibration $S^n \to BO(n) \to BO(n+1)$ gives a long exact sequence
on homotopy groups
\begin{equation*}
  \cdots \lra \pi_{n+1}(BO(n+1)) \overset{\partial}\lra \bZ \overset{\tau}\lra \pi_n(BO(n)) \overset{s}\lra \pi_n(BO(n+1)) \lra 0,
\end{equation*}
and we let $\Lambda_n = \IM(\partial) \subset \bZ$. The map $\tau$
sends 1 to the map which classifies the tangent bundle of the
$n$-sphere, so $\Lambda_n$ is trivial if $n$ is even, $\bZ$ if $n=1$,
$3$ or $7$, and $2\bZ$ otherwise, by the Hopf invariant 1 theorem. The
data $((-1)^n, \Lambda_n)$ is a \emph{form parameter} in the sense of
Bak \cite{Bak, Bak2}.

Suppose that $n \geq 4$, and let $W$ be an $(n-1)$-connected
$2n$-manifold which is stably parallelisable. We will describe how to
associate to it a non-degenerate quadratic form $Q_W$ having form
parameter $((-1)^n, \Lambda_n)$, following Wall \cite{Wall62}. The
$\bZ$-module
$$\pi_n(W) \cong H_n(W;\bZ)$$
has a $(-1)^n$-symmetric bilinear form
$$\lambda : H_n(W;\bZ) \otimes H_n(W;\bZ) \lra \bZ$$
given by the intersection form, which is non-degenerate by
Poincar{\'e} duality. If $x=[f] \in \pi_n(W)$, then by a theorem of
Haefliger \cite{Haefliger} as $n \geq 4$ we may represent it uniquely
up to isotopy by an embedding $f : S^n \hookrightarrow W$, which has
an $n$-dimensional normal bundle which is stable trivial. This
represents an element
\begin{equation*}
  \alpha(x) \in \bZ/\Lambda_n = \Ker\left(\pi_n(BO(n)) \overset{s}\to
    \pi_n(BO(n+1)\right),
\end{equation*}
and Wall has shown that this satisfies
\begin{enumerate}[(i)]
\item $\alpha(a \cdot x) = a^2 \cdot \alpha(x)$, for $a \in \bZ$,
\item $\alpha(x+y) = \alpha(x)+\alpha(y) + \lambda(x,y)$, where
  $\lambda(x,y)$ is reduced modulo $\Lambda_n$.
\end{enumerate}
Thus the data $(\pi_n(W), \lambda, \alpha)$ is a quadratic form with
form parameter $((-1)^n, \Lambda_n)$.

\begin{remark}
  This construction above does not quite work for $n \leq 3$, as
  Haefliger's theorem does not apply, but we can proceed anyway.  When
  $n=1$ or $3$ we have $\bZ/\Lambda_n = \{0\}$ and so a quadratic form
  with parameter $(-1, \Lambda_n)$ should be a module with
  skew-symmetric bilinear form. We take $H_n(W_g;\bZ)$ with its
  intersection form.

  When $n=2$ we have $\bZ/\Lambda_2 = \bZ$ and so a quadratic form
  with parameter $(1, \Lambda_2)$ should be an even symmetric bilinear
  form. The intersection form on $H_2(W_g;\bZ)$ \emph{is} even, so we
  can take this.
\end{remark}

By construction, it is clear that if $\varphi : W_0 \to W_1$ is a
diffeomorphism then $\varphi_* : H_n(W_0;\bZ) \to H_n(W_1;\bZ)$ is a
morphism of quadratic forms. The most elementary quadratic form is the
\emph{hyperbolic form}
\begin{equation*}
  H = \left(\bZ^2 \text{ with basis } e, f ; 
    \begin{pmatrix}
      0 & 1   \\
      (-1)^n & 0
    \end{pmatrix}
    ; \alpha(e)=\alpha(f)=0\right).
\end{equation*}
The manifold $W_g^{2n} = g(S^n \times S^n)$ has associated quadratic form $H^{\oplus g}$, the direct sum
of $g$ copies of the hyperbolic form, and so we have a homomorphism
$$\hat{f} : \Gamma_g^n \lra \mathrm{Aut}(H^{\oplus g}).$$
Kreck \cite{KreckAut} has shown that this map is surjective for $n
\geq 3$, Wall \cite{Wall4Mfld} has shown it is surjective for $n=2$ as
long as $g \geq 5$, and it is well-known to be surjective for $n=1$
and all $g$. We obtain an abelian quotient
\begin{equation}
  f : \Gamma_g^n \lra H_1(\mathrm{Aut}(H^{\oplus g});\bZ).
\end{equation}

\begin{proposition}\label{prop:AutAb}
  There are isomorphisms
  \begin{equation*}
    H_1(\mathrm{Aut}(H^{\oplus g});\bZ) \cong 
    \begin{cases}
      (\bZ/2)^2 & \text{$n$ even}\\
      0 & \text{$n=1$, $3$ or $7$}\\
      \bZ/4 & \text{otherwise}.
    \end{cases}
  \end{equation*}
  as long as $g \geq 5$.
\end{proposition}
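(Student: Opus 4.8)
The form parameter $((-1)^n,\Lambda_n)$ takes three qualitatively different shapes, and the plan is to treat each in turn, in each case identifying $\mathrm{Aut}(H^{\oplus g})$ with a classical arithmetic group and then invoking (or re-deriving) the structure of that group. When $n\in\{1,3,7\}$ one has $\Lambda_n=\bZ$, so $\bZ/\Lambda_n=0$, the quadratic refinement $\alpha$ carries no information, and $\mathrm{Aut}(H^{\oplus g})=\mathrm{Sp}_{2g}(\bZ)$. For $g\geq 5\geq 3$ this group is perfect --- it is generated by elementary symplectic transvections, each of which is a commutator of such transvections inside an $\mathrm{Sp}_4$-block (equivalently $\mathrm{Sp}_{2g}(\bZ)^{\mathrm{ab}}=0$ for $g\geq 3$, by Bass--Milnor--Serre) --- so $H_1=0$, as claimed.

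When $n$ is even one has $(-1)^n=+1$ and $\Lambda_n=0$, and then Wall's axioms (i) and (ii) force $\lambda(x,x)=\alpha(2x)-2\alpha(x)=2\alpha(x)$, i.e.\ $\alpha(x)=\tfrac12\lambda(x,x)$ is \emph{determined} by the bilinear form. So $H^{\oplus g}$ is just the even unimodular lattice of signature $(g,g)$, every isometry of $\lambda$ automatically preserves $\alpha$, and $\mathrm{Aut}(H^{\oplus g})=\mathrm{O}(H^{\oplus g})$ is its full orthogonal group. I would then appeal to the classical theory of indefinite unimodular lattices (Eichler, Wall, Kneser): for $g\geq 3$ the commutator subgroup of $\mathrm{O}(H^{\oplus g})$ is the spinorial kernel, the common kernel of $\det$ and the spinor norm $\theta$, and it is perfect (being generated by Eichler transvections, each a commutator); while the quotient, detected by $(\det,\theta)$, is $(\bZ/2)^2$ --- surjectivity is witnessed by the reflections in the norm $\pm2$ vectors $e_1\pm f_1$ of one hyperbolic summand, and the image is no larger by the local spinor-norm computation for unimodular forms. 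Hence $H_1(\mathrm{O}(H^{\oplus g}))\cong(\bZ/2)^2$ for $g\geq 5$.

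The case $n$ odd with $n\notin\{1,3,7\}$ is the one I expect to cost real work. Here $(-1)^n=-1$ and $\Lambda_n=2\bZ$: $\lambda$ is a unimodular symplectic form, $\alpha$ is $\bZ/2$-valued, and one checks that $\alpha$ descends to a quadratic refinement $\bar\alpha$ of the mod-$2$ symplectic form on $\bF_2^{2g}$ with $\mathrm{Arf}(\bar\alpha)=0$. Consequently $\mathrm{Aut}(H^{\oplus g})$ is the subgroup $\mathrm{Sp}^q_{2g}(\bZ)\subset\mathrm{Sp}_{2g}(\bZ)$ of matrices whose mod-$2$ reduction lies in the orthogonal group $\mathrm{O}^+_{2g}(\bF_2)=\mathrm{Aut}(\bar\alpha)$, so there is a short exact sequence
\begin{equation*}
1\lra \ker\bigl(\mathrm{Sp}_{2g}(\bZ)\to\mathrm{Sp}_{2g}(\bF_2)\bigr)\lra \mathrm{Sp}^q_{2g}(\bZ)\lra \mathrm{O}^+_{2g}(\bF_2)\lra 1 .
\end{equation*}
I would feed this into the five-term exact sequence in group homology. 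The outer terms are under control: $H_1(\mathrm{O}^+_{2g}(\bF_2))\cong\bZ/2$ (the Dickson invariant; $\Omega^+_{2g}(\bF_2)$ is simple for $g\geq 3$), and in this stable range the abelianisation of the principal level-$2$ congruence subgroup is $\mathrm{Sym}^2(\bF_2^{2g})$, whose $\mathrm{O}^+_{2g}(\bF_2)$-coinvariants one checks to be $\bF_2$ (the only invariant quadratic form being $\bar\alpha$ itself). Hence $H_1(\mathrm{Sp}^q_{2g}(\bZ))$ is an extension of $\bZ/2$ by a quotient of $\bF_2$, and in particular has order at most $4$.

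It remains --- and this is the crux --- to rule out $(\bZ/2)^2$ and show $H_1(\mathrm{Sp}^q_{2g}(\bZ))\cong\bZ/4$. For this I would exhibit an explicit surjective homomorphism $\mathrm{Sp}^q_{2g}(\bZ)\to\bZ/4$, arising from a theta-multiplier system --- equivalently from the $\bZ/8$-valued Arf--Brown--Kervaire invariant attached to the $\bZ/4$-valued quadratic refinements lifting $\bar\alpha$; its existence forces a surjection $H_1(\mathrm{Sp}^q_{2g}(\bZ))\twoheadrightarrow\bZ/4$, which together with the upper bound above gives $H_1(\mathrm{Sp}^q_{2g}(\bZ))\cong\bZ/4$. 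The three cases together then give the Proposition for $g\geq 5$. (A more uniform route would instead recognise $\mathrm{Aut}(H^{\oplus g})$ as the unitary group of the hyperbolic form over the form ring $(\bZ,\Lambda_n,(-1)^n)$, invoke homological stability for such unitary groups --- the relevant stable range accounting for the hypothesis $g\geq 5$ --- and quote the known values of the stable $H_1$, namely the hermitian $K_1$-groups of $\bZ$ for these three form parameters.)
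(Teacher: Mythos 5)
Your case division and identifications are exactly the paper's: $\mathrm{Sp}_{2g}(\bZ)$ for $n=1,3,7$, the full orthogonal group of the even unimodular lattice of signature $(g,g)$ for $n$ even (your observation that $\alpha=\tfrac12\lambda(x,x)$ is forced is correct), and $\mathrm{Sp}^q_{2g}(\bZ)$ for the remaining odd $n$. The first two cases are fine: perfection of $\mathrm{Sp}_{2g}(\bZ)$ for $g\geq 3$ is standard, and your appeal to the classical determinant/spinor-norm description of $H_1$ of the orthogonal group of an indefinite even unimodular lattice (Eichler, Wall, Kneser) is a legitimate substitute for the paper's citation of \cite[Theorem 1.7]{GHS}, whose subgroup $\widetilde{\mathrm{SO}}^+_{g,g}(\bZ)$ is precisely the spinorial kernel you describe.

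The gap is in the third case, which is the only genuinely hard one; the paper settles it by quoting \cite[Theorem 1.1]{JohnsonMillson}, which gives $H_1(\mathrm{Sp}^q_{2g}(\bZ))\cong\bZ/4$ for $g\geq 3$, whereas you attempt to re-derive it and both halves of your argument are incomplete. For the upper bound you feed the extension $1\to\mathrm{Sp}_{2g}(\bZ,2)\to\mathrm{Sp}^q_{2g}(\bZ)\to\mathrm{O}^+_{2g}(\bF_2)\to 1$ into the five-term sequence, but the key input --- that $H_1(\mathrm{Sp}_{2g}(\bZ,2))$ is $\mathrm{Sym}^2(\bF_2^{2g})$ with $\mathrm{O}^+_{2g}(\bF_2)$-coinvariants $\bF_2$ --- is asserted with no proof or reference, and it is exactly at the prime $2$ that the abelianisation of the level-$2$ congruence subgroup is delicate: one only knows a priori that $[\mathrm{Sp}_{2g}(\bZ,2),\mathrm{Sp}_{2g}(\bZ,2)]\subseteq\mathrm{Sp}_{2g}(\bZ,4)$, and elements such as $(I+2A)^2=I+4(A+A^2)$ lie in $\mathrm{Sp}_{2g}(\bZ,4)$ without obviously being commutators, so possible $4$-torsion and extra summands must be ruled out before your bound ``order at most $4$'' follows. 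More seriously, the lower bound --- the existence of a surjection $\mathrm{Sp}^q_{2g}(\bZ)\to\bZ/4$ coming from a theta-multiplier or $\bZ/8$-valued Brown--Kervaire invariant --- is only announced (``I would exhibit\ldots''), not constructed; but that construction \emph{is} the substance of the Johnson--Millson computation (their explicit homomorphism on p.~147, which the paper also uses elsewhere), so as written the crux of the proposition in the case $n$ odd, $n\neq 1,3,7$ is assumed rather than proved. Either carry out that construction in detail or simply cite \cite[Theorem 1.1]{JohnsonMillson}, as the paper does.
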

\begin{proof}
  By Charney's stability theorem \cite{Charney} for the homology of
  automorphism groups of quadratic forms over a PID, the group
  $H_1(\mathrm{Aut}(H^{\oplus g});\bZ)$ is independent of $g$ as long
  as $g \geq 5$. In fact, the statement in \cite{Charney} claims this
  only for $g \geq 6$, but using the slightly improved connectivity
  for the necessary poset / simplicial complex which is established in
  \cite[Theorem 3.2]{GR-WStability} this can be improved to $g \geq 5$
  (the poset $HU_g = HU(H^{\oplus g})$ of \cite{Charney} is the face
  poset of the simplicial complex $K^a(H^{\oplus g})$ of
  \cite{GR-WStability}, so they have homeomorphic geometric
  realisations).

  If $n$ is even, then $\mathrm{Aut}(H^{\oplus g}) = \mathrm{O}_{g,g}(\bZ)$ is
  the indefinite orthogonal group over the integers. This is a
  subgroup of $\mathrm{O}_{g,g}(\bR)$, which has maximal compact subgroup
  $\mathrm{O}_g(\bR) \times \mathrm{O}_g(\bR)$; the determinants of these two factors
  provides a surjective homomorphism $a: \mathrm{O}_{g,g}(\bZ) \to
  (\bZ/2)^2$. In \cite[Theorem 1.7]{GHS} it is shown that a certain
  index 4 normal subgroup $\widetilde{\mathrm{SO}}^+_{g,g}(\bZ)$ of
  $\mathrm{O}_{g,g}(\bZ)$, for the definition of which we refer to that paper,
  has trivial abelianisation. Thus the homomorphism $a$ is the
  abelianisation.

  If $n=1$, $3$ or $7$ then a quadratic form with parameter $(-1,
  \Lambda_n)$ is nothing but an antisymmetric bilinear form, so
  $\mathrm{Aut}(H^{\oplus g}) = \mathrm{Sp}_{2g}(\bZ)$ is the
  symplectic group over the integers. This is well-known to have
  trivial abelianisation, as long as $g \geq 3$.

  For the remaining odd $n$, $\mathrm{Aut}(H^{\oplus g}) = \mathrm{Sp}_{2g}^q(\bZ) \subset \mathrm{Sp}_{2g}(\bZ)$ is the subgroup of those symplectic matrices
  which stabilise the quadratic form $\alpha(e_i) =
  \alpha(f_i)=0$. The abelianisation of this group has been computed
  in \cite[Theorem 1.1]{JohnsonMillson} to be $\bZ/4$ as long as $g
  \geq 3$.
\end{proof}

\begin{remark}
  The argument above can be used to strengthen the ``only if'' part of
  Corollary~\ref{cor:perfection}: for $n \neq 1,3$, the mapping class
  groups are not perfect for \emph{any} $g \geq 1$.

  For $n=7$ this is in fact the case for $g \geq 0$, as the generator
  of $\Omega_{15}^{\langle 7 \rangle} = \bZ/2$ can be hit by a
  diffeomorphism supported inside a disc.  For $n \neq 7$ we argue as
  follows.  The matrix $\left (
    \begin{smallmatrix}
      -1 & 0 \\
      0 & -1
    \end{smallmatrix} \right )$ defines an element of
  $\mathrm{Aut}(H)$ for all $n$, and is easily seen to be realised by
  an element of $\Gamma_{1,1}^n$.  For $n$ even this maps to a
  non-trivial element of $(\bZ/2)^2$, and for $n$ odd apart from
  $1,3,7$, it follows from the formula \cite[p.\ 147]{JohnsonMillson}
  that it maps to the order-two element of $\bZ/4$.
\end{remark}

\section{Low-dimensional cases}
\label{sec:low-dimens-cases}
The cases $n < 3$ of Theorem \ref{thm:Main} require special treatment,
so let us dispense with them first. We will then focus on the generic
case $n \geq 3$.

\subsection{$\mathbf{n=1}$} The relevant bordism group is
$\Omega_3^{\langle 1 \rangle}$, third oriented bordism, which is
well-known to be zero. Thus the first part of Theorem \ref{thm:Main}
states that $\Gamma_{g,1}^1$ surjects onto the trivial group, which is
certainly true, and the second part states that the abelianisation of
$\Gamma_{g,1}^1$ is zero as long as $g \geq 5$. This is \cite[Theorem
1]{Powell} (which in fact only requires $g \geq 3$).

\subsection{$\mathbf{n=2}$} The relevant bordism group is
$\Omega_5^{\langle 2 \rangle}$, fifth Spin bordism, which is zero by
the results of \cite[p.\ 201]{MilnorSpin}. Thus in this case Theorem \ref{thm:Main}
just says that the map $f : \Gamma_{g,1}^2 \to
H_1(\mathrm{Aut}(Q_{W_g^4}))$ is surjective for $g \geq 2$. But the
homomorphism $\hat{f} : \Gamma_{g,1}^2 \to \mathrm{Aut}(Q_{W_g^4})$ is
already surjective in this case, by \cite[Theorem 2]{Wall4Mfld}. Though we do not require it for our results, Kreck \cite[Theorem 1]{KreckAut} has shown that for $g \geq 2$ the kernel of the surjective map $\Gamma_g^2 \to \mathrm{Aut}(Q_{W_g^4})$ is precisely the subgroup of those diffeomorphisms pseudoisotopic to the identity.


\section{Nontriviality of the mapping torus construction}\label{sec:Nontriv}
\label{sec:nontr-mapp-torus}

Using the stabilisation maps we have a homomorphism
$$d: \Gamma_{0,1}^n \lra \Gamma_{g,1}^n$$
for any $g$, and the group $\Gamma_{0,1}^n$---the mapping class group
of the sphere relative to a disc---is isomorphic to the group
$\Theta_{2n+1}$ of exotic $(2n+1)$-spheres via the clutching
construction.

\begin{lemma}\label{lem:Theta}\mbox{}
\begin{enumerate}[(i)]
\item The image of $\Theta_{2n+1}$ in $\Gamma_{g,1}^n$ is central. 

\item\label{it:Theta:2} The composition $\Theta_{2n+1} \overset{d}\to \Gamma_{g,1}^n \overset{t}\to \Omega_{2n+1}^{\langle n \rangle}$ is surjective, so in particular $t$ is surjective.

\item The composition $\Theta_{2n+1} \overset{d}\to \Gamma_{g,1}^n \overset{\hat{f}}\to \mathrm{Aut}(H^{\oplus g})$ is trivial.
\end{enumerate}
\end{lemma}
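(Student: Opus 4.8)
The plan is to treat the three statements in order, each time exploiting that an element of $\Theta_{2n+1}$ is represented by a diffeomorphism of $W_g^{2n}$ supported inside a disc $D^{2n}$ disjoint from the fixed disc.

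For (i), let $\sigma \in \Theta_{2n+1}$ be represented by $\psi \in \Diff(W_g^{2n}, D^{2n})$ with $\supp(\psi)$ contained in an embedded disc $D' \subset \mathring{W}^{2n}_{g,1}$. Given any $\varphi \in \Diff(W_g^{2n}, D^{2n})$, I would use that the space of embeddings of $D'$ into $\mathring{W}^{2n}_{g,1}$ is connected (since $2n \geq 3$, any two embedded discs in a connected manifold are isotopic, and we may choose the isotopy to be compactly supported), so there is an isotopy from $\varphi|_{D'}$ to $\incl_{D'}$; conjugating $\psi$ by $\varphi$ then gives a diffeomorphism supported in $\varphi(D')$, and this isotopy identifies it up to isotopy with a diffeomorphism supported in $D'$ built from $\sigma$ transported by the (contractible choice of) isotopy. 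Hence $\varphi \psi \varphi^{-1}$ is isotopic to $\psi$, i.e.\ $d(\sigma)$ is central.

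For (iii), I would compute $\hat f(d(\sigma))$ directly: since $\psi$ is supported in a disc $D'$, it acts trivially on $H_n(W_g^{2n};\bZ)$ (every homology class can be represented by a cycle avoiding $D'$, as $D'$ is contractible and $2n \geq 4$), so $\hat f(d(\sigma)) = \id$, proving that the composite to $\Aut(H^{\oplus g})$ is trivial.

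For (ii), the main content, I would identify the mapping torus $T_\psi$ with $\psi$ supported in a disc. Writing $W_g^{2n} = \mathring{W}^{2n}_{g,1} \cup D^{2n}$ with $\psi = \id$ outside $D'$, the mapping torus $T_\psi$ is the union of the trivial bundle $\mathring{W}^{2n}_{g,1} \times S^1$ (away from $D'$) with the mapping torus of $\psi|_{D'}$; after the open-book surgery along $D^{2n} \times S^1$, the resulting closed $(n-1)$-connected manifold $T'_\psi$ is diffeomorphic to the $(2n+1)$-sphere $\Sigma_\sigma \in \Theta_{2n+1}$ glued from two discs along $\sigma$ — more precisely, $T'_\psi$ is the connected sum of $S^{2n+1}$ with $\Sigma_\sigma$, hence $\Sigma_\sigma$ itself, carrying its unique $BO\langle n\rangle$-structure. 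So $t(d(\sigma)) = [\Sigma_\sigma, \ell] \in \Omega_{2n+1}^{\langle n\rangle}$. Now every class in $\Omega_{2n+1}^{\langle n\rangle}$ is represented by a framed $(2n+1)$-manifold (since $BO\langle n\rangle \to BO$ is $n$-co-connected, a $(2n+1)$-manifold with $BO\langle n\rangle$-structure admits a stable framing), and by surgery below the middle dimension we may assume this framed representative is $(n-1)$-connected; by a theorem of Kervaire–Milnor such a framed $(2n+1)$-manifold becomes a homotopy sphere after surgery, without changing its $BO\langle n\rangle$-bordism class, so the map $\Theta_{2n+1} \to \Omega_{2n+1}^{\langle n\rangle}$ sending $\sigma \mapsto [\Sigma_\sigma,\ell]$ is surjective. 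This forces $t \circ d$, and a fortiori $t$, to be surjective.

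The main obstacle will be step (ii): one must carefully verify that the open-book construction applied to a disc-supported $\psi$ yields exactly the clutching sphere $\Sigma_\sigma$ (including a check that the $BO\langle n\rangle$-structure is the canonical one and does not contribute), and then that the induced map $\Theta_{2n+1} \to \Omega_{2n+1}^{\langle n\rangle}$ hits everything — the latter requiring the surgery-theoretic reduction of an arbitrary $BO\langle n\rangle$-bordism class in the critical dimension $2n+1$ to a homotopy sphere, which is where the Kervaire–Milnor-type argument enters. Steps (i) and (iii) are routine isotopy and homology arguments.
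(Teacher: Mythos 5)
Parts (i) and (iii) of your proposal are fine and essentially match the paper: for (i) the paper isotopes the disc-supported diffeomorphism into a fixed neighbourhood of the marked disc, on which the other diffeomorphism is the identity, which is an equivalent (slightly slicker) version of your disc-embedding argument; (iii) is the same one-line homology argument. Within (ii), one small point: your claim that $T'_\psi$ is diffeomorphic to $S^{2n+1}\#\Sigma_\sigma=\Sigma_\sigma$ is false for $g\geq 1$, since the Wang sequence and Mayer--Vietoris show $H_n(T'_\psi)$ contains $\bZ^{2g}$; what is true, and all you need, is $T'_\psi\cong T'_{\mathrm{id}}\#\Sigma_\sigma$ together with $[T'_{\mathrm{id}}]=t(\mathrm{id})=0$, so that $t(d(\sigma))=[\Sigma_\sigma]$ in bordism. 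This is easily repaired.

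The genuine gap is in your surjectivity argument in (ii). You assert that a closed $(2n+1)$-manifold with $BO\langle n\rangle$-structure admits a stable framing ``since $BO\langle n\rangle\to BO$ is $n$-co-connected''. Co-connectivity of that map only controls lifting problems over complexes of dimension at most $n$ (this is exactly what lets the structure on a twisted sphere extend over the disc); it says nothing about refining a $BO\langle n\rangle$-structure on a $(2n+1)$-dimensional manifold to a framing, where the obstructions live in $H^{i}(M;\pi_{i-1}(O))$ for $i$ up to $2n+1>n$. Concretely, for $n=3$ a closed Spin $7$-manifold with $p_1\neq 0$ (for instance a suitable $S^3$-bundle over $S^4$ with vanishing Euler class) carries a $BO\langle 3\rangle$-structure but is not stably parallelisable. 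The statement you actually need is weaker---that every class in $\Omega^{\langle n\rangle}_{2n+1}$ has a stably framed representative---and it is true, but it amounts to the surjectivity of $\Omega^{\mathrm{fr}}_{2n+1}\to\Omega^{\langle n\rangle}_{2n+1}$ (Theorem \ref{thm:RhoIso}), which in this paper is obtained as a consequence of Lemma \ref{lem:Theta}(\ref{it:Theta:2}) itself (every class is represented by a homotopy sphere, and homotopy spheres are stably parallelisable); invoking it here is therefore circular, or at best replaces a short direct argument by a substantial external theorem of Stolz. The paper's route avoids the framing issue entirely: it performs surgery directly on $BO\langle n\rangle$-manifolds, observing that the Kervaire--Milnor argument (their Theorem 6.6) uses stable parallelisability only (a) to represent homotopy classes of dimension $\leq n$ by framed embeddings and (b) to put a structure on the trace of the surgery, and both steps go through verbatim for $BO\langle n\rangle$-structures; this makes every bordism class $n$-connected, hence a homotopy sphere by Poincar\'{e} duality, with no framing ever chosen. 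You should reorganise your step (ii) along these lines.
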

\begin{proof}
  Let $f$ be a diffeomorphism of $W_{g}$ fixing a neighbourhood $U$ of
  $D^{2n}$, and $g$ be a diffeomorphism supported in a disc disjoint
  from the marked one. Then $g$ is isotopic to a diffeomorphism $g'$
  supported in $U$ but still disjoint from $D^{2n}$, and now $g'$
  commutes with $f$. Thus $\IM(d) \subset \Gamma_{g,1}^n$ is central.

  The map $t \circ d$ sends an exotic $(2n+1)$-sphere to its
  $BO\langle n \rangle$-bordism class (such an exotic sphere has a
  canonical $BO\langle n \rangle$-structure by virtue of being
  highly-connected), so we must show that any $(2n+1)$-dimensional
  manifold with $BO\langle n \rangle$-structure $(W, \ell_W)$ is
  cobordant to a $n$-connected manifold (as it is then $2n$-connected
  by Poincar{\'e} duality).

  This follows from the methods of Kervaire and Milnor, specifically
  \cite[Theorem 6.6]{KervaireMilnor}. They work with manifolds which
  are stably parallelisable, but this is only used in two ways: to
  show that homotopy classes of dimension $* \leq n$ can be
  represented by framed embeddings, and to show that the trace of the
  surgery is stably parallelisable. A $BO\langle n \rangle$-structure
  still allows one to represent homotopy classes of dimension $* \leq
  n$ by framed embeddings, and a $BO\langle n \rangle$-structure can
  be induced on the trace of the surgery, too.

  Finally, a mapping class in the image of $d$ is supported in a small
  disc, and so acts trivially on the homology of $W_g$, so $\hat{f}
  \circ d$ is trivial.
\end{proof}

This lemma has the following implication regarding the kernel of the
mapping torus construction $t$.

\begin{corollary}\label{cor:Kernel}
  The kernel of the homomorphism $t : H_1(\Gamma_{g,1}^n) \to
  \Omega_{2n+1}^{\langle n \rangle}$ has cardinality at least 4 if $n
  \neq 1$, $3$ or $7$ and $g \geq 5$.
\end{corollary}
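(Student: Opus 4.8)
The plan is to combine Lemma \ref{lem:Theta} with Proposition \ref{prop:AutAb} to lift the lower bound on the abelianisation side to a lower bound on $\ker(t)$. Concretely, consider the composition $f \circ d : \Theta_{2n+1} \to \Gamma_{g,1}^n \to H_1(\mathrm{Aut}(H^{\oplus g});\bZ)$. Since $\Gamma_{g,1}^n$ acts through $\hat f$ on the quadratic form, and by part (iii) of Lemma \ref{lem:Theta} the composite $\hat f \circ d$ is trivial, it follows that $f \circ d$ is trivial as well. Meanwhile part (ii) of Lemma \ref{lem:Theta} says $t \circ d : \Theta_{2n+1} \to \Omega_{2n+1}^{\langle n \rangle}$ is surjective. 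So the image of $d$ in $\Gamma_{g,1}^n$ maps onto all of $\Omega_{2n+1}^{\langle n \rangle}$ under $t$, but dies under $f$.

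Now pass to abelianisations. Since $\Theta_{2n+1}$ is abelian, $d$ factors through $H_1(\Gamma_{g,1}^n)$, and on $H_1(\Gamma_{g,1}^n)$ we have the two homomorphisms $t$ and $f$. Pick any element $\xi \in H_1(\mathrm{Aut}(H^{\oplus g});\bZ)$ of order $2$; by Proposition \ref{prop:AutAb}, for $n \neq 1,3,7$ the group $H_1(\mathrm{Aut}(H^{\oplus g});\bZ)$ is either $(\bZ/2)^2$ or $\bZ/4$ and in particular has even order $\geq 4$, so it contains a subgroup of order $\geq 4$ (namely itself). Since $f : \Gamma_{g,1}^n \to H_1(\mathrm{Aut}(H^{\oplus g});\bZ)$ is surjective (the underlying map $\hat f$ is surjective onto $\mathrm{Aut}(H^{\oplus g})$ by Kreck/Wall as recalled before Proposition \ref{prop:AutAb}), we may choose lifts $y_i \in H_1(\Gamma_{g,1}^n)$ of the elements of a subgroup $A \leq H_1(\mathrm{Aut}(H^{\oplus g});\bZ)$ of order $4$. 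For each such $y_i$, using surjectivity of $t \circ d$ choose $z_i$ in the image of $d$ with $t(z_i) = t(y_i)$; then $x_i := y_i - z_i$ satisfies $t(x_i) = 0$ and $f(x_i) = f(y_i) - f(z_i) = f(y_i)$ since $f \circ d = 0$. Thus the four classes $x_i \in \ker(t : H_1(\Gamma_{g,1}^n) \to \Omega_{2n+1}^{\langle n \rangle})$ have pairwise distinct images under $f$, hence are pairwise distinct, so $|\ker(t)| \geq 4$.

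The only point requiring a little care — and the step I expect to be the mildest "obstacle" — is making sure that the hypotheses line up: $g \geq 5$ is exactly what Proposition \ref{prop:AutAb} needs for the computation of $H_1(\mathrm{Aut}(H^{\oplus g}))$ and what the surjectivity statements for $\hat f$ require (Kreck for $n \geq 3$, with the low cases $n=1,2$ excluded here anyway since we assume $n \neq 1,3,7$ and are free to also handle $n=2$ separately, or simply note $n \geq 3$ is the relevant regime). Everything else is formal diagram-chasing in abelian groups. One should also double-check that $Q_{W_g^{2n}} = H^{\oplus g}$ so that $H_1(\mathrm{Aut}(Q_{W_g^{2n}})) = H_1(\mathrm{Aut}(H^{\oplus g}))$, which was established in Section 2.
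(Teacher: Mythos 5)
Your argument is correct and is essentially the paper's own proof: both use Lemma \ref{lem:Theta} (ii) and (iii) to run the same diagram chase showing that $f$ restricted to $\Ker(t) \subset H_1(\Gamma_{g,1}^n)$ still surjects onto $H_1(\mathrm{Aut}(H^{\oplus g}))$, which has order $4$ by Proposition \ref{prop:AutAb}. The paper packages this as a commutative diagram with exact middle row, but the chase is identical to your explicit lift-and-correct construction.
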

\begin{proof}
  Consider the commutative diagram
  \begin{equation*}
    \xymatrix{
      & \Theta_{2n+1} \ar[d]^-{d} \ar@{->>}[rd]\\
      \Ker(t) \ar@{-->}[rd] \ar@{^(->}[r] & H_1(\Gamma_{g,1}^n) \ar@{->>}[r]^-{t} \ar@{->>}[d] \ar@{->>}[rd]^-{f} & \Omega_{2n+1}^{\langle n \rangle} \\
      & H_1(\Gamma_{g,1}^n)/\Theta_{2n+1} \ar@{->>}[r] & H_1(\mathrm{Aut}(H^{\oplus g}))
    }
  \end{equation*}
  where the middle row is exact. Diagram chasing shows that the dashed
  arrow is surjective, and so $\Ker(t) \to H_1(\Gamma_{g,1}^n) \to
  H_1(\mathrm{Aut}(H^{\oplus g}))$ is surjective. The target has
  cardinality 4 in these cases by Proposition \ref{prop:AutAb}.
\end{proof}

\section{A refinement of the mapping torus construction}
\label{sec:refin-mapp-torus}

From now on we suppose that $n \geq 3$. The proof of the remainder of
Theorem \ref{thm:Main} uses two more involved theorems proved recently
by the authors, which concern not the mapping class groups but the
entire diffeomorphism groups of the manifolds $W_g^{2n}$. There
are continuous homomorphisms
$$\Diff(W_g^{2n}, D^{2n}) \lra \Diff(W_{g+1}^{2n}, D^{2n})$$
given by connect-sum with $W_1^{2n}$ inside the marked disc, and
extending diffeomorphisms by the identity. In \cite[Theorem
1.2]{GR-WStability} we showed that for $n \geq 3$ the maps on classifying
spaces
$$B\Diff(W_g^{2n}, D^{2n}) \lra B\Diff(W_{g+1}^{2n}, D^{2n})$$
induce homology isomorphisms in degrees $2\ast \leq g-3$. In particular, as long as $g \geq 5$ they induce
isomorphisms on first homology. The map $H_1(B\Diff(W_g^{2n},
D^{2n});\bZ) \to H_1(\Gamma_{g,1}^n;\bZ)$ is also an isomorphism,
which shows that the stabilisation map
$$H_1(\Gamma_{g,1}^n;\bZ) \lra H_1(\Gamma_{g+1,1}^n;\bZ)$$
is an isomorphism for $g \geq 5$.

Secondly, we showed how to identify the stable homology, that is, the
homology of $\hocolim_{g \to \infty} B\Diff(W_g^{2n}, D^{2n})$, as
follows.  Let $\theta_n : BO(2n)\langle n \rangle \to BO(2n)$ denote
the $n$-connected cover, and $\theta_n^*\gamma_{2n}$ denote the
pullback of the tautological $2n$-dimensional vector bundle. Write
$\MTtheta_n$ for the Thom spectrum of the virtual bundle
$-\theta_n^*\gamma_{2n} \to BO(2n)\langle n \rangle$. Parametrised
Pontrjagin--Thom theory provides maps
$$\alpha_g : B\Diff(W_g^{2n}, D^{2n}) \lra \Omega^\infty_0 \MTtheta_n$$
which assemble to a map $\alpha_\infty :
\hocolim_{g\to\infty}B\Diff(W_g^{2n}, D^{2n}) \to \Omega^\infty_0
\MTtheta_n$ which we show in \cite[Theorem 1.1]{GR-W2} induces an
isomorphism on homology as long as $n \geq 3$. Given these two
theorems, we are reduced to calculating $H_1(\Omega^\infty_0
\MTtheta_n)$.

Recall that $\MTtheta_n = \mathbf{Th}(-\theta_n^*\gamma_{2n} \to
BO(2n)\langle n \rangle)$. Let us write $\mathbf{MO}\langle n \rangle$
for the Thom spectrum\footnote{Some authors denote the $(n-1)$-connected cover of a space $X$ by $X\langle n \rangle$, and so write $\mathbf{MO}\langle n \rangle$ for the Thom spectrum associated to the $(n-1)$-connected cover of $BO$. We emphasise that our notation is different.} of the tautological bundle over $BO\langle n
\rangle$, so the stabilisation map induces a spectrum map
$$s : \MTtheta_n \lra \Sigma^{-2n} \mathbf{MO}\langle n \rangle.$$

\begin{lemma}
  The composition
  $$H_1(\Gamma_{g,1}^n) \overset{\sim}\longleftarrow H_1(B\Diff(W_g^{2n}, D^{2n})) \overset{\alpha_g}\lra H_1(\Omega^\infty_0\MTtheta_n) \cong \pi_1(\MTtheta_n) \overset{s_*}\lra \Omega_{2n+1}^{\langle n \rangle}$$
  agrees with the mapping torus construction $t$.
\end{lemma}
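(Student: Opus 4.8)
The plan is to unwind both constructions simultaneously on the level of the parametrised Pontrjagin--Thom map and check that they produce the same $BO\langle n\rangle$-bordism class. Concretely, the map $\alpha_g$ records, for a family of manifolds over a base $B$, the family of ``scanning'' maps; over a point $\varphi \in \Diff(W_g^{2n},D^{2n})$ the relevant piece of the family is the mapping torus $T_\varphi \to S^1$, viewed as a bundle of $2n$-manifolds over the circle. First I would make precise the identification $H_1(\Omega^\infty_0 \MTtheta_n) \cong \pi_1(\MTtheta_n)$ (Hurewicz, since $\MTtheta_n$ is connective and we are in degree $1$) and recall that $\pi_1(\MTtheta_n)$ is computed by the Pontrjagin--Thom construction as the bordism group of closed $1$-manifolds equipped with a $\theta_n$-structure on their stable normal bundle together with a null-bordism datum of dimension $2n$ --- equivalently, by the usual Thom-spectrum manipulation, as bordism classes of $(2n+1)$-manifolds $N$ fibering over $S^1$ with a $BO(2n)\langle n\rangle$-structure on the vertical tangent bundle.

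Next I would trace the image of the mapping class $[\varphi]$ under $\alpha_g$. A loop in $B\Diff(W_g^{2n},D^{2n})$ representing $[\varphi]$ classifies the $W_g^{2n}$-bundle $T_\varphi \to S^1$ (with its trivialised $D^{2n}\times S^1$ sub-bundle), and the Pontrjagin--Thom map sends this to the class in $\pi_1(\MTtheta_n)$ given by $T_\varphi$ with the vertical tangent bundle and its canonical (up to homotopy) $\theta_n$-structure, which exists and is unique because $T_\varphi$ is a bundle of $2n$-manifolds over a $1$-complex so the obstruction/uniqueness groups for lifting the vertical Gauss map along the $n$-connected cover vanish. Then I would apply the stabilisation map $s$: passing from $\MTtheta_n$ to $\Sigma^{-2n}\mathbf{MO}\langle n\rangle$ forgets the vertical framing data and remembers only the stable normal (equivalently stable tangential) $BO\langle n\rangle$-structure of the total space $T_\varphi$. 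Thus $s_* \circ (\alpha_g)_* [\varphi] = [T_\varphi, \ell']$ where $\ell'$ is the induced $BO\langle n\rangle$-structure on the stable normal bundle of $T_\varphi$, which by obstruction theory is the unique such lift.

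Finally I would reconcile this with the definition of $t$, which uses $T'_\varphi$ --- the result of surgering out the embedded $D^{2n}\times S^1 \subset T_\varphi$ to make it $(n-1)$-connected --- rather than $T_\varphi$ itself. The point is that this surgery is a sequence of surgeries on classes of degree $\leq n$ in a $(2n+1)$-manifold, hence does not change the $BO\langle n\rangle$-bordism class: the trace of each such surgery inherits a $BO\langle n\rangle$-structure (exactly as in the proof of Lemma~\ref{lem:Theta}, invoking the Kervaire--Milnor surgery argument), so $[T_\varphi,\ell'] = [T'_\varphi, \ell]$ in $\Omega_{2n+1}^{\langle n\rangle}$. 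Combining the three steps gives $s_*\circ (\alpha_g)_* \circ (\text{iso from } H_1(\Gamma_{g,1}^n)) = t$, as claimed. The main obstacle I expect is the first bookkeeping step: carefully matching the homotopy-theoretic description of $\pi_1(\MTtheta_n)$ coming out of the parametrised Pontrjagin--Thom machine (Madsen--Tillmann--Weiss style, i.e. families of embedded submanifolds and their collapse maps) with the geometric ``mapping torus with vertical $\theta_n$-structure'' picture, and checking that the canonical $\theta_n$-structure produced by the scanning map on $T_\varphi$ agrees under $s$ with the obstruction-theoretic $BO\langle n\rangle$-structure on the stable normal bundle used to define $t$; once that dictionary is fixed the remaining identifications are formal.
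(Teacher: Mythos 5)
Your proposal is correct and follows essentially the same route as the paper, whose entire proof is the remark that both maps apply the Pontrjagin--Thom construction to the mapping torus; your unwinding of the parametrised Pontrjagin--Thom map, the effect of the stabilisation $s$, and the surgery-invariance of the $BO\langle n\rangle$-bordism class under the surgery along $D^{2n}\times S^1$ is exactly the intended content. (One small inaccuracy: the $BO\langle n\rangle$-lift on $T_\varphi$ itself need not be unique, since $H^1(T_\varphi;\bZ/2)\neq 0$, but this is harmless because uniqueness is only needed on the $(n-1)$-connected $T'_\varphi$ after the surgery.)
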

\begin{proof}
  Both apply the Pontryagin--Thom construction to the mapping
  torus.
\end{proof}

\subsection{A long exact sequence in stable homotopy}

Let us write $\mathbf{F}_n$ for the homotopy fibre of the
spectrum map $s$.  There is a commutative diagram
\begin{equation}\label{eq:1}
  \begin{aligned}
  \xymatrix{
    SO/SO(2n) \ar[r] \ar[d] &
    BO(2n)\langle n \rangle \ar[r] \ar[d]& BO(2n) \ar[d]\\
    \ast \ar[r] &
    BO\langle n \rangle \ar[r]& BO,
  }
  \end{aligned}
\end{equation}
where both squares are homotopy pullback.  The left square induces a
map of homotopy cofibres $\Sigma(SO/SO(2n)) \to (BO\langle
n\rangle)/(BO(2n)\langle n\rangle)$ which we see from the Serre
spectral sequence to be
$(3n+1)$-connected.  The whole diagram maps to $BO$, and may be
Thomified.  The map of cofibres, desuspended $(2n+1)$ times, gives a
map
\begin{equation*}
  \Sigma^{-2n} SO/SO(2n) \lra \mathbf{F}_n,
\end{equation*}
which is $n$-connected.

We may therefore rewrite the long exact sequence in stable homotopy
for the map $s:\MTtheta_n \to \Sigma^{-2n} \mathbf{MO}\langle n\rangle$ in the
following way,
\begin{equation}\label{eq:LES}
\begin{gathered}
  \xymatrix{
    & \cdots \ar[r]^-{s_*}& \pi_{2n+2}(\mathbf{MO}\langle n \rangle) \ar[lld]_{\partial_*}\\
    \pi_{2n+1}^s(SO / SO(2n)) \ar[r]& \pi_1(\MTtheta_n) \ar[r]^-{s_*} & \pi_{2n+1}(\mathbf{MO}\langle n \rangle) \ar[lld]\\
    \pi_{2n}^s(SO / SO(2n)) \ar[r] & \pi_0(\MTtheta_n) \ar[r]^-{s_*} & \pi_{2n}(\mathbf{MO}\langle n \rangle) \ar[r] & 0,
  }
\end{gathered}
\end{equation}
as $SO/SO(2n)$ is $(2n-1)$-connected, so $\pi_{2n-1}^s(SO/SO(2n))=0$. By this connectivity property, the group $\pi_{2n+1}^s(SO/SO(2n))$ is in the range of the Freudenthal suspension theorem as long as $2n+1 \leq 2(2n-1)$ i.e.\ $n \geq 2$, as is $\pi_{2n}^s(SO/SO(2n))$. Thus to compute these groups we may as well compute the associated unstable homotopy groups of $SO/SO(2n)$. As $SO(2n+m) \to SO$ is $(2n+m-1)$-connencted, the homotopy groups of $SO/SO(2n)$ agree with those of the Stiefel manifold $V_{2n+m,m}$ of $m$-frames in $\mathbb{R}^{2n+m}$ in degrees $* \leq 2n+m-2$.

Now Paechter \cite{Paechter} has computed the homotopy groups of Stiefel manifolds in a range of degrees, which along with the discussion above gives the following.

\begin{lemma}[Paechter]\label{lem:Paechter}
  For $n \geq 2$, we have $\pi_{2n}^s(SO / SO(2n)) \cong \bZ$, and
  the group $\pi_{2n+1}^s(SO/SO(2n))$ is isomorphic to $\bZ/4$ when $n$ is odd
  and to $(\bZ/2)^2$ when $n$ is even.
\end{lemma}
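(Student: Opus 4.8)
The plan is to trade the two stable homotopy groups for the two bottom non-trivial homotopy groups of a single finite Stiefel manifold, continuing the reduction begun just above, and then to resolve the one extension problem that remains. For $n\geq 2$ both $\pi_{2n}^s(SO/SO(2n))$ and $\pi_{2n+1}^s(SO/SO(2n))$ lie in the Freudenthal range, hence equal the unstable groups $\pi_{2n}(SO/SO(2n))$ and $\pi_{2n+1}(SO/SO(2n))$; and since $SO(2n+m)\to SO$ is $(2n+m-1)$-connected these agree with $\pi_{2n}(V_{2n+3,3})$ and $\pi_{2n+1}(V_{2n+3,3})$ upon taking $m=3$, for which $2n+1\leq 2n+m-2$. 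These two groups are again in the stable range of the $(2n-1)$-connected manifold $V_{2n+3,3}=\SO(2n+3)/\SO(2n)$, so I may work in the stable homotopy category throughout; there, by the classical (James) description of the low cells of Stiefel manifolds, $V_{2n+3,3}$ coincides through dimension $2n+2$ with the stunted real projective space $\RP^{2n+2}/\RP^{2n-1}$, which has exactly one cell in each of the dimensions $2n$, $2n+1$, $2n+2$. The attaching map of the $(2n+1)$-cell has degree $1+(-1)^{2n+1}=0$, so the $(2n+1)$-skeleton is $S^{2n}\vee S^{2n+1}$, and the top cell is attached along a map $S^{2n+1}\to S^{2n}\vee S^{2n+1}$ whose two components are a degree-$2$ self-map of $S^{2n+1}$ and a class $\varepsilon\eta\in\pi_{2n+1}(S^{2n})\cong\bZ/2$, with $\varepsilon\in\{0,1\}$.

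The group $\pi_{2n}$ is now immediate from the Hurewicz theorem: $\pi_{2n}(V_{2n+3,3})\cong H_{2n}(\RP^{2n+2}/\RP^{2n-1};\bZ)\cong\bZ$, for either parity of $n$, which gives the first assertion. For $\pi_{2n+1}$ I would feed the cofibre sequence $S^{2n+1}\to S^{2n}\vee S^{2n+1}\to \RP^{2n+2}/\RP^{2n-1}$ into the stable homotopy long exact sequence; since $\pi_{2n}^s(S^{2n+1})=0$ this identifies $\pi_{2n+1}^s(\RP^{2n+2}/\RP^{2n-1})$ with the cokernel of the map $\bZ\to\pi_1^s\oplus\pi_0^s=\bZ/2\oplus\bZ$ sending $1\mapsto(\varepsilon,2)$ — namely $(\bZ/2)^2$ if $\varepsilon=0$ and $\bZ/4$ if $\varepsilon=1$. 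It then remains to evaluate $\varepsilon$, which is precisely the coefficient of $\Sq^2$ on the bottom cohomology class $x_{2n}$: from the formula $\Sq^i(x^j)=\binom{j}{i}x^{i+j}$ in $H^*(\RP^\infty;\bZ/2)$ one gets $\Sq^2 x_{2n}=\binom{2n}{2}x_{2n+2}=n(2n-1)x_{2n+2}$, so $\varepsilon\equiv n\bmod 2$, and hence $\pi_{2n+1}^s(SO/SO(2n))$ is $\bZ/4$ for $n$ odd and $(\bZ/2)^2$ for $n$ even. All of this is of course contained in Paechter's tables \cite{Paechter}.

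The one genuinely delicate point is the evaluation of $\varepsilon$, equivalently the (non-)triviality of the extension $0\to\bZ/2\to\pi_{2n+1}^s(SO/SO(2n))\to\bZ/2\to 0$ when $n$ is odd. The purely additive bookkeeping — whether via the cell structure above or via the homotopy exact sequences of the Stiefel sphere bundles $S^{2n}\to V_{2n+2,2}\to S^{2n+1}$ and $V_{2n+2,2}\to V_{2n+3,3}\to S^{2n+2}$ — only pins the group down to order $4$; it is the secondary information carried by $\Sq^2$ (equivalently, James' intrinsic formula for the attaching maps of these skeleta) that decides between $\bZ/4$ and $(\bZ/2)^2$, and this is the step I expect to require the most care.
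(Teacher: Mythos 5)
Your argument is correct, but it is genuinely more than what the paper does: after the same initial reduction (Freudenthal plus the $(2n+2)$-connectivity of $V_{2n+3,3} \to SO/SO(2n)$), the paper simply quotes Paechter's tables for the unstable homotopy of Stiefel manifolds, whereas you give a self-contained computation. Your route — identifying the $(2n+2)$-skeleton of $V_{2n+3,3}$ with the stunted projective space $\RP^{2n+2}/\RP^{2n-1}$ (legitimate, since in the Schubert/quasi-projective cell structure the first cell outside the stunted projective subcomplex has dimension $4n+1$), splitting off the middle cell because its attaching degree $1+(-1)^{2n+1}$ vanishes, and reading $\pi_{2n+1}^s$ off as the cokernel of $(\varepsilon\eta,2)$ — is sound, and your resolution of the extension is the right one: after collapsing the $S^{2n+1}$ wedge summand the top cell is attached by $\varepsilon\eta$ alone, so naturality of $\Sq^2$ shows $\varepsilon$ equals the coefficient in $\Sq^2 x_{2n}=\binom{2n}{2}x_{2n+2}$, i.e.\ $\varepsilon\equiv n \bmod 2$, giving $\bZ/4$ for $n$ odd and $(\bZ/2)^2$ for $n$ even, in agreement with the lemma. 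What your approach buys is independence from Paechter's (rather intricate) tables and an explicit identification of where the $\bZ/4$ comes from (the $\eta$-component of the top attaching map, i.e.\ the $\Sq^2$ in the stunted projective space), which is in the same spirit as the paper's own use of $x_{2n+1}$ and $w_{2n+2}$ in Lemma 4.4; the cost is that you must invoke the classical cell-structure results of James/Steenrod--Epstein for Stiefel manifolds, which the citation to Paechter sidesteps.
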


It will not be quite enough for us to know these as abstract groups, we shall need to know a little about their behaviour under the Hurewicz map.

\begin{lemma}\label{lem:HurewiczStuff}
Suppose that $n \geq 2$.
\begin{enumerate}[(i)]
\item\label{it:HurewiczStuff:1} The group $H^{2n+1}(SO/SO(2n);\bF_2)$ is 1-dimensional, generated by a class $x_{2n+1}$ which maps under
$$\partial^* : H^{2n+1}(SO/SO(2n);\bF_2) \lra H^{2n+2}(\mathbf{MO}\langle n \rangle;\bF_2)$$
to $w_{2n+2} \cdot u$, where $u \in H^0(\mathbf{MO}\langle n \rangle;\bF_2)$ is the Thom class. 

\item\label{it:HurewiczStuff:3} The Hurewicz map $\pi_{2n+1}^s(SO/SO(2n)) \to H_{2n+1}(SO/SO(2n);\bF_2)$ is surjective.

\item\label{it:HurewiczStuff:4} The pullback of the Euler class along $SO/SO(2n) \to BSO(2n)$ gives twice a generator of $H^{2n}(SO/SO(2n);\bZ) \cong \bZ$.
\end{enumerate}
\end{lemma}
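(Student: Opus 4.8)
The three parts are essentially independent; the common ingredient is the classical structure of the stable Stiefel manifold $SO/SO(2n)\simeq\colim_m SO(m)/SO(2n)$, which is $(2n-1)$-connected and has, in the range $*\le 4n-1$, a CW structure with one cell in each dimension $2n,2n+1,\dots$ — equivalently, $H^*(SO/SO(2n);\bF_2)$ has a simple system of generators $x_{2n},x_{2n+1},\dots$ with $|x_j|=j$. In particular $H^{2n+1}(SO/SO(2n);\bF_2)=\bF_2\langle x_{2n+1}\rangle$, and dually $H_{2n+1}(SO/SO(2n);\bF_2)=\bF_2$; this is the first clause of (i).

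For the remaining clause of (i), I would identify $\partial^*$ with the Thom-twisted cohomology transgression of the fibration $SO/SO(2n)\to BO(2n)\langle n\rangle\to BO\langle n\rangle$ from diagram \eqref{eq:1}: unwinding the construction of \eqref{eq:LES}, $\partial$ is obtained by Thomifying the connecting map $BO\langle n\rangle\to BO\langle n\rangle/BO(2n)\langle n\rangle$ and using the $(3n+1)$-connected comparison $\Sigma(SO/SO(2n))\to BO\langle n\rangle/BO(2n)\langle n\rangle$, so on $\bF_2$-cohomology $\partial^*$ becomes $x\mapsto\tau(x)\cdot u$ with $\tau\colon H^{2n+1}(SO/SO(2n);\bF_2)\to H^{2n+2}(BO\langle n\rangle;\bF_2)$ the transgression and $u$ the Thom class. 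The fibration maps to the universal Stiefel fibration $SO/SO(2n)\to BO(2n)\to BO$ (via $\theta_n$ and the cover $BO\langle n\rangle\to BO$), so by naturality $\tau(x_{2n+1})$ is the pullback to $BO\langle n\rangle$ of the universal transgression, and classically the generators $x_j$ transgress to the Stiefel--Whitney classes $w_{j+1}$ there; hence $\partial^*(x_{2n+1})=w_{2n+2}\cdot u$. As a cross-check, by exactness of \eqref{eq:LES} one has $\partial^*(x_{2n+1})\in\ker(s^*)$, which the Thom isomorphisms identify with $(\ker\pi^*)\cdot u$ for $\pi^*\colon H^{2n+2}(BO\langle n\rangle;\bF_2)\to H^{2n+2}(BO(2n)\langle n\rangle;\bF_2)$; since $\pi^*(w_{2n+2})$ is $w_{2n+2}$ of a rank-$2n$ bundle, hence $0$, and a short spectral sequence argument shows this kernel is spanned by $w_{2n+2}$, the formula holds even without knowing whether $w_{2n+2}$ is nonzero on $BO\langle n\rangle$ (it need not be visibly so, e.g.\ on $BString$). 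The main obstacle lies here: making the identification $\partial^*=\tau\cdot u$ genuinely precise (tracking the desuspensions and Thom isomorphisms), and quoting correctly the two classical Stiefel-manifold facts used.

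For (iii): Freudenthal's theorem and Lemma~\ref{lem:Paechter} give $\pi_{2n}(SO/SO(2n))\cong\pi_{2n}^s(SO/SO(2n))\cong\bZ$, which being the first nonzero homotopy (hence homology) group yields $H^{2n}(SO/SO(2n);\bZ)\cong\bZ$. In the fibration $SO/SO(2n)\to BSO(2n)\to BSO$ the image of $\pi_{2n}(SO/SO(2n))\to\pi_{2n}(BSO(2n))=\pi_{2n-1}(SO(2n))$ is $\ker(\pi_{2n-1}(SO(2n))\to\pi_{2n-1}(SO))$, which by the fibration $SO(2n)\to SO(2n+1)\to S^{2n}$ is the cyclic subgroup generated by the clutching class $\tau_{2n}=[TS^{2n}]$; as $\tau_{2n}$ has infinite order (its Euler number being $\chi(S^{2n})=2$) and the connecting map $\pi_{2n+1}(BSO)=\pi_{2n}(SO)\to\pi_{2n}(SO/SO(2n))=\bZ$ vanishes ($\pi_{2n}(SO)$ being finite), the map $\pi_{2n}(SO/SO(2n))\to\langle\tau_{2n}\rangle$ is an isomorphism. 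Thus the rank-$2n$ bundle classified by $SO/SO(2n)\to BSO(2n)$ restricts over the bottom cell $SO(2n+1)/SO(2n)=S^{2n}$ to $TS^{2n}$; since that inclusion induces an isomorphism on $H^{2n}(-;\bZ)\cong\bZ$, the pulled-back Euler class equals $e(TS^{2n})=2\cdot(\text{generator})$.

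For (ii): since $H_{2n}(SO/SO(2n);\bZ)\cong\bZ$ is torsion-free the $(2n+1)$-cell is attached trivially to the bottom $2n$-cell, so the $(2n+1)$-skeleton is $S^{2n}\vee S^{2n+1}$ and $\pi_{2n+1}^s(S^{2n}\vee S^{2n+1})=\pi_1^s\oplus\pi_0^s$ surjects onto $\pi_{2n+1}^s(SO/SO(2n))$. By naturality of the Hurewicz map the composite $\pi_{2n+1}^s(S^{2n}\vee S^{2n+1})\to\pi_{2n+1}^s(SO/SO(2n))\to H_{2n+1}(SO/SO(2n);\bF_2)$ equals $\pi_{2n+1}^s(S^{2n}\vee S^{2n+1})\to H_{2n+1}(S^{2n}\vee S^{2n+1};\bF_2)\to H_{2n+1}(SO/SO(2n);\bF_2)$; the first arrow is onto (on the $S^{2n+1}$ summand it is the surjection $\pi_0^s=\bZ\to H_{2n+1}(S^{2n+1};\bF_2)=\bF_2$) and the second is an isomorphism (dually, $x_{2n+1}$ restricts to a generator of $H^{2n+1}(S^{2n}\vee S^{2n+1};\bF_2)$), so the stable Hurewicz map is surjective.
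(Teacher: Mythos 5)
Your proposal is correct, and for the key part (i) it runs on the same mechanism as the paper: identify $\partial^*$ with (transgression)$\cdot u$ by Thomifying the map of pairs and using the $(3n+1)$-connected comparison $\Sigma(SO/SO(2n)) \to (BO\langle n\rangle)/(BO(2n)\langle n\rangle)$ together with the Thom isomorphism; the only difference is sourcing. The paper extracts both the one-dimensionality of $H^{2n+1}(SO/SO(2n);\bF_2)$ and the transgression $x_i \mapsto w_{i+1}$ directly from the Serre spectral sequence of $SO/SO(2n) \to BSO(2n) \to BSO$ (polynomial cohomologies of base and total space force exterior generators transgressing to the $w_{i+1}$), then transports this by naturality exactly as you do; you instead quote the classical simple-system-of-generators description of the stable Stiefel manifold and the classical transgression of Stiefel--Whitney classes, which is legitimate but should carry a reference (Borel; James for the cell structure/stunted projective space model you also use later). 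For part (ii) your route genuinely differs: the paper applies the stable Hurewicz theorem to get $\pi^s_{2n+1} \twoheadrightarrow H_{2n+1}(-;\bZ)$ and then a Bockstein argument (using that $H_{2n}\cong\bZ$ is torsion-free) to get $H_{2n+1}(-;\bZ)\twoheadrightarrow H_{2n+1}(-;\bF_2)$, which needs no cell structure at all; your skeleton argument ($(2n+1)$-skeleton $\simeq S^{2n}\vee S^{2n+1}$ since the $(2n+1)$-cell attaches by a degree-zero, hence null, map) is correct but leans on the one-cell-per-dimension structure, so the paper's version is the more economical of the two. For part (iii) the conclusions coincide but the bookkeeping differs: you prove (via the homotopy exact sequences, using finiteness of $\pi_{2n}(SO)$) that the bottom-cell composite classifies $TS^{2n}$ and that restriction to $S^{2n}$ is an isomorphism on $H^{2n}(-;\bZ)$, which reads off the answer directly; the paper simply asserts the tangent-bundle identification, records only injectivity of the restriction, and instead pins down the factor of $2$ by the two-sided divisibility argument (Euler number $2$ on $S^{2n}$ bounds the divisibility above, vanishing of $w_{2n}$ on the fibre bounds it below). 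Your cross-check in (i) via $\ker(s^*)$ is sound but redundant.
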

\begin{proof}
For (\ref{it:HurewiczStuff:1}), we consider the Serre spectral sequence for the fibre sequence
  \begin{equation*}
    SO/SO(2n) \lra BSO(2n) \lra BSO.
  \end{equation*}
  The $\bF_2$-cohomology of $BSO$ is the polynomial algebra on the
  Stiefel--Whitney classes $w_i \in H^i(BSO;\bF_2)$ for $i \geq 2$,
  and the $\bF_2$-cohomology of $BSO(2n)$ is the polynomial algebra on
  the Stiefel--Whitney classes $w_i \in H^i(BSO;\bF_2)$ for $2 \leq i
  \leq 2n$. Thus for each $i \geq 2n$ there must be a class
  $$x_i \in H^i(SO/SO(2n);\bF_2)$$
  which transgresses to $w_{i+1}$, and $H^*(SO/SO(2n);\bF_2)$ is
  isomorphic as a vector space to the exterior algebra on the classes $x_i$. (This could also be computed using the
  Eilenberg--Moore spectral sequence.) As we have assumed that $n \geq 2$,
  it follows that in degrees $2n \leq i \leq 2n+2$ the $i$th
  cohomology of $SO/SO(2n)$ is 1-dimensional and is generated by
  $x_i$. 
  
  The $(3n+1)$-connected map $\Sigma(SO/SO(2n)) \to (BO\langle n
  \rangle) / (BO(2n)\langle n \rangle)$ induces an isomorphism
  $H_{i-1}(SO/SO(2n)) \cong H_i(BO\langle n\rangle, BO(2n)\langle
  n\rangle)$ when $i \leq 3n$, under which the transgression in the
  Serre spectral sequence for the fibre sequence
  \begin{equation*}
    SO/SO(2n) \lra BO(2n)\langle n \rangle \lra BO\langle n \rangle
  \end{equation*}
  may be identified with the connecting homomorphism in the long exact
  sequence for homology of the pair $(BO\langle n\rangle,
  BO(2n)\langle n\rangle)$.
  This long exact sequence is isomorphic, via the Thom isomorphism,
  with the long exact sequence for the cofibre sequence
  $$\mathbf{F}_n \lra \MTtheta_n \lra \Sigma^{-2n} \mathbf{MO}\langle n \rangle.$$
  Hence $x_{2n+1}$ maps to $w_{2n+2} \cdot u$ under the connecting homomorphism.

For (\ref{it:HurewiczStuff:3}), recall that the Hurewicz map for a $(k-1)$-connected space with $k \geq 2$ is an isomorphism in degree $k$ and a surjection in degree $(k+1)$. Hence
\begin{equation}\label{eq:Hurewicz1}
\bZ \cong \pi_{2n}^s(SO/SO(2n)) \lra H_{2n}(SO/SO(2n);\bZ)
\end{equation}
is an isomorphism and
\begin{equation}\label{eq:Hurewicz2}
\pi_{2n+1}^s(SO/SO(2n)) \lra H_{2n+1}(SO/SO(2n);\bZ)
\end{equation}
is a surjection. By \eqref{eq:Hurewicz1} multiplication by 2 on $H_{2n}(SO/SO(2n);\bZ)$ is an injection: it then follows from the Bockstein exact sequence that
$$H_{2n+1}(SO/SO(2n);\bZ) \lra H_{2n+1}(SO/SO(2n);\bF_2)$$
is a surjection, which combined with \eqref{eq:Hurewicz2} gives the result.

For (\ref{it:HurewiczStuff:4}), observe that the map
$$S^{2n} = SO(2n+1)/SO(2n) \lra SO/SO(2n)$$
is $2n$-connected, so induces an injection on $H^{2n}(-;\bZ)$ (and
both spaces have $2n$th cohomology $\bZ$). Now $S^{2n} \to SO/SO(2n)
\to BSO(2n)$ classifies the tangent bundle of $S^{2n}$, which has
Euler number 2, so the pullback of the Euler class to $SO/SO(2n)$ is
not divisible by more than 2; on the other hand, the Euler class
reduces to $w_{2n}$ modulo 2, which vanishes on $SO/SO(2n)$. Hence it
is divisible by precisely 2.
\end{proof}

We now analyse the long exact sequence \eqref{eq:LES} in low degrees.

\begin{lemma}
  The map $\pi_{2n}^s(SO / SO(2n)) \to \pi_0(\MTtheta_n)$ is injective.
\end{lemma}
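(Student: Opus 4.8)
The plan is to read the statement off the long exact sequence \eqref{eq:LES}. By exactness at the term $\pi_{2n}^s(SO/SO(2n))$, the map $\pi_{2n}^s(SO/SO(2n)) \to \pi_0(\MTtheta_n)$ is injective if and only if the connecting homomorphism
$$\partial : \pi_{2n+1}(\mathbf{MO}\langle n \rangle) \lra \pi_{2n}^s(SO/SO(2n))$$
vanishes. By Lemma \ref{lem:Paechter} the target $\pi_{2n}^s(SO/SO(2n)) \cong \bZ$ is torsion-free, so it suffices to prove that the source $\pi_{2n+1}(\mathbf{MO}\langle n\rangle)$ is a torsion group: a homomorphism from a torsion group to $\bZ$ is necessarily zero.

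To see that $\pi_{2n+1}(\mathbf{MO}\langle n\rangle)$ is torsion (indeed finite), I would first note that it equals the bordism group $\Omega_{2n+1}^{\langle n\rangle}$, hence is a finitely generated abelian group; equivalently $\mathbf{MO}\langle n\rangle$ is a connective spectrum whose homology is of finite type, being the Thom spectrum of a bundle over $BO\langle n\rangle$. It then remains to check it is rationally trivial. For any spectrum $X$ one has $\pi_*(X)\otimes\bQ \cong H_*(X;\bQ)$, so by the Thom isomorphism
$$\pi_{2n+1}(\mathbf{MO}\langle n\rangle)\otimes\bQ \cong H_{2n+1}(\mathbf{MO}\langle n\rangle;\bQ) \cong H_{2n+1}(BO\langle n\rangle;\bQ).$$
Since $H^*(BO\langle n\rangle;\bQ)$ is a polynomial algebra on the Pontryagin classes $p_i$ with $4i>n$, all of even degree, the rational homology of $BO\langle n\rangle$ is concentrated in even degrees; as $2n+1$ is odd, the displayed group vanishes. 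Hence $\pi_{2n+1}(\mathbf{MO}\langle n\rangle)$ is finite, $\partial=0$, and the lemma follows. Cobordism-theoretically the last step just records that $\Omega_{2n+1}^{\langle n\rangle}$ has rank zero because no Pontryagin numbers are available in a dimension not divisible by $4$.

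I do not anticipate a genuine obstacle: the argument is a short rational computation fed into exactness. The only point meriting a little care is one that is already folded into the formulation of \eqref{eq:LES}, namely the identification of the relevant homotopy group of the homotopy fibre $\mathbf{F}_n$ with $\pi_{2n}^s(SO/SO(2n))$ via the $n$-connected map $\Sigma^{-2n}SO/SO(2n)\to\mathbf{F}_n$, which is legitimate in degree $0$ since $n\geq 3$.
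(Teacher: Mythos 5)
Your argument is correct, but it is not the route the paper takes. You read injectivity off exactness in \eqref{eq:LES}: the kernel of $\pi_{2n}^s(SO/SO(2n)) \to \pi_0(\MTtheta_n)$ is the image of the connecting map from $\pi_{2n+1}(\mathbf{MO}\langle n \rangle)$, and that image must vanish because the source is a torsion group (rational stable Hurewicz plus the Thom isomorphism identify $\pi_{2n+1}(\mathbf{MO}\langle n \rangle)\otimes\bQ$ with $H_{2n+1}(BO\langle n \rangle;\bQ)$, which is zero since the rational cohomology of $BO\langle n \rangle$ is evenly graded) while the target $\pi_{2n}^s(SO/SO(2n))\cong\bZ$ is torsion-free; you also correctly flag that identifying $\pi_0(\mathbf{F}_n)$ with $\pi_{2n}^s(SO/SO(2n))$ uses the $n$-connectivity of $\Sigma^{-2n}SO/SO(2n)\to\mathbf{F}_n$. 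The paper instead argues directly on $\pi_0$: under the Thom isomorphism the Euler class gives a spectrum map $E:\MTtheta_n \to \mathbf{H}\bZ$, and by Lemma \ref{lem:HurewiczStuff}~(iv) the composite $\Sigma^{-2n}(SO/SO(2n)) \to \MTtheta_n \to \mathbf{H}\bZ$ is twice a generator of $H^{2n}(SO/SO(2n);\bZ)\cong\bZ$, so on $\pi_0$ the composite $\bZ \to \pi_0(\MTtheta_n) \to \bZ$ is multiplication by $\pm 2$ and the first map is injective. Your version is a short, standard rational computation that needs nothing beyond Paechter/Hurewicz (and does not use Lemma \ref{lem:HurewiczStuff}~(iv) at all); the paper's version buys slightly more, namely an explicit detection of the embedded $\bZ$ inside $\pi_0(\MTtheta_n)$ by the Euler-characteristic map (up to the factor $2$), and it recycles a computation the paper needs anyway. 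Both are complete proofs.
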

\begin{proof}
  Under the Thom isomorphism, the Euler class gives a map $E:
  \MTtheta_n \to \mathbf{H}\bZ$, and the composition
  $$\Sigma^{-2n}(SO / SO(2n)) \lra \MTtheta_n \overset{E}\lra \mathbf{H}\bZ$$
  is twice a generator of $H^{2n}(SO/SO(2n);\bZ) \cong \bZ$ by Lemma \ref{lem:HurewiczStuff} (\ref{it:HurewiczStuff:4}). The claim
  follows by taking $\pi_0$ of this composition.
\end{proof}

The long exact sequence \eqref{eq:LES} thus simplifies to
$$\cdots \lra \Omega_{2n+2}^{\langle n \rangle} \overset{\partial_*}\lra \pi_{2n+1}^s(SO / SO(2n)) \lra \pi_1(\MTtheta_n) \overset{s_*}\lra \Omega_{2n+1}^{\langle n \rangle} \lra 0,$$
and we recall that under the isomorphism
$$H_1(\Gamma_{g,1}^n) \overset{\sim}\longleftarrow H_1(B\Diff(W_g^{2n}, D^{2n})) \overset{\sim}\lra H_1(\Omega^\infty_0\MTtheta_n) \cong \pi_1(\MTtheta_n)$$
the map $s_*$ coincides with the map $t$.

\begin{lemma}\label{lem:PartialZero}
  If $n \neq 3$ or $7$ then the map $\partial_* : \Omega_{2n+2}^{\langle n \rangle} \to \pi_{2n+1}^s(SO / SO(2n))$ is zero.
\end{lemma}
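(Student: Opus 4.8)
The plan is to understand the boundary map $\partial_* : \Omega_{2n+2}^{\langle n \rangle} = \pi_{2n+2}(\mathbf{MO}\langle n \rangle) \to \pi_{2n+1}^s(SO/SO(2n))$ by factoring it through mod-$2$ homology, exploiting that its target is a $2$-group (by Lemma \ref{lem:Paechter}, $\pi_{2n+1}^s(SO/SO(2n))$ is $\bZ/4$ or $(\bZ/2)^2$) and that the relevant Hurewicz maps have been controlled in Lemma \ref{lem:HurewiczStuff}. First I would note that since the target is a finite $2$-group it suffices to show that $\partial_*$ becomes zero after composing with $\pi_{2n+1}^s(SO/SO(2n)) \to \pi_{2n+1}^s(SO/SO(2n)) \tensor \bF_2$, and then, via the surjectivity of the Hurewicz map in Lemma \ref{lem:HurewiczStuff}(\ref{it:HurewiczStuff:3}), after composing further with the Hurewicz map into $H_{2n+1}(SO/SO(2n);\bF_2)$, which is $1$-dimensional on the class dual to $x_{2n+1}$. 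Dualising, the obstruction to $\partial_*$ being zero is detected by whether the class $x_{2n+1} \in H^{2n+1}(SO/SO(2n);\bF_2)$ pulls back nontrivially under $\partial^*$ composed with the Hurewicz-dual of a given bordism class.

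Next I would use Lemma \ref{lem:HurewiczStuff}(\ref{it:HurewiczStuff:1}): $\partial^*(x_{2n+1}) = w_{2n+2}\cdot u$ in $H^{2n+2}(\mathbf{MO}\langle n\rangle;\bF_2)$, where $u$ is the Thom class. So the composite $\Omega_{2n+2}^{\langle n\rangle} \overset{\partial_*}\to \pi_{2n+1}^s(SO/SO(2n)) \to H_{2n+1}(SO/SO(2n);\bF_2)$ is, up to the identifications above, evaluation of the class $w_{2n+2}\cdot u$ against the $\bF_2$-Hurewicz image of a $BO\langle n\rangle$-bordism class in $H_{2n+2}(\mathbf{MO}\langle n\rangle;\bF_2)$. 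Under the Thom isomorphism this is exactly evaluation of the Stiefel--Whitney number $w_{2n+2}$ against the fundamental class of a closed $(2n+2)$-manifold $M$ equipped with a $BO\langle n\rangle$-structure, i.e.\ the top Stiefel--Whitney number $w_{2n+2}[M]$. So the lemma reduces to: \emph{every closed $(2n+2)$-manifold admitting a $BO\langle n\rangle$-structure has $w_{2n+2}[M] = 0$}, for $n \neq 3, 7$.

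Finally I would prove this Stiefel--Whitney number vanishes. For $M$ closed, $w_{2n+2}[M] = \chi(M) \bmod 2$ by Wu's formula, so it suffices to show $\chi(M)$ is even. A $BO\langle n\rangle$-structure in particular kills $w_1$ through $w_n$, and since $\dim M = 2n+2$ the Wu classes $v_i$ vanish for $i > n+1$; one then checks that the surviving Wu class $v_{n+1}$, which could a priori detect $\chi \bmod 2$, is itself forced to vanish or to pair trivially once $n \neq 3, 7$. Concretely, $v_{n+1}$ is a polynomial in $w_1,\dots,w_{n+1}$ with the only term not involving $w_1,\dots,w_n$ being $w_{n+1}$ itself; a $BO\langle n\rangle$-structure makes $w_{n+1}$ the image of a class pulled back from $SO/SO(2n)$ in the range where (by the cohomology computation in the proof of Lemma \ref{lem:HurewiczStuff}) it is detected by $x_{n+1}$, and $x_{n+1}$ is the transgression of $w_{n+2}$ — the point where the Hopf-invariant-one phenomenon enters, since $\Lambda_{n+1}$-type obstructions force $w_{n+1}$ to vanish on such structures precisely when $n+1$ is not $2,4,8$, i.e.\ $n \neq 1,3,7$ (and $n=1$ is outside our hypotheses $n \geq 3$). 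Hence $v_{n+1} = 0$, so all Wu classes vanish, so $\chi(M)$ is even and $w_{2n+2}[M]=0$, giving $\partial_* = 0$.

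The main obstacle is the last paragraph: pinning down exactly why the $BO\langle n\rangle$-structure forces the relevant Wu class (equivalently the obstruction to $\chi(M)$ being even) to vanish, and isolating the role of $n \neq 3,7$ — this is where one must carefully track the transgression $x_{n+1}\mapsto w_{n+2}$ together with the Hopf-invariant-one input, rather than appealing to a soft argument. Everything before that is formal: reducing to a $2$-group statement, to an $\bF_2$-homology statement via the Hurewicz surjectivity of Lemma \ref{lem:HurewiczStuff}(\ref{it:HurewiczStuff:3}), and to a Stiefel--Whitney number via Lemma \ref{lem:HurewiczStuff}(\ref{it:HurewiczStuff:1}) and the Thom isomorphism.
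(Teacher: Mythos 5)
There is a genuine gap, and it occurs already in your first, ``formal'' reduction. The target $\pi_{2n+1}^s(SO/SO(2n))$ is $\bZ/4$ for $n$ odd and $(\bZ/2)^2$ for $n$ even, whereas $H_{2n+1}(SO/SO(2n);\bF_2) \cong \bZ/2$; so the Hurewicz map of Lemma \ref{lem:HurewiczStuff} (\ref{it:HurewiczStuff:3}), although surjective, has kernel of order $2$ (and already $\pi \to \pi\otimes\bF_2$ kills the subgroup $2\bZ/4$ in the odd case: a homomorphism into $\bZ/4$ with image $\{0,2\}$ dies after tensoring with $\bF_2$ without being zero). Consequently, proving that the composite $\Omega_{2n+2}^{\langle n \rangle} \xrightarrow{\partial_*} \pi_{2n+1}^s(SO/SO(2n)) \xrightarrow{h} H_{2n+1}(SO/SO(2n);\bF_2)$ vanishes --- i.e.\ that $w_{2n+2}[M]=0$ for every closed $(2n+2)$-manifold with $BO\langle n\rangle$-structure --- would only place $\mathrm{Im}(\partial_*)$ inside the index-two kernel of $h$; it cannot show $\partial_*=0$. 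Mod-$2$ detection is exactly how the paper argues in the \emph{opposite} direction for $n=3,7$ (Lemma \ref{lem:PartialOnto}), where surjectivity of $h\circ\partial_*$ does force surjectivity of $\partial_*$, but it is intrinsically unable to prove vanishing. The paper's proof of the present lemma is of a different nature entirely: by Corollary \ref{cor:Kernel} (which uses the quotient $f$ onto $H_1(\mathrm{Aut}(H^{\oplus g}))$, of order $4$ by Proposition \ref{prop:AutAb}, together with the surjectivity of $t$ on $\Theta_{2n+1}$ from Lemma \ref{lem:Theta}) the kernel of $s_*\colon \pi_1(\MTtheta_n)\to\Omega_{2n+1}^{\langle n\rangle}$ has order at least $4$, while the exact sequence \eqref{eq:LES} and Lemma \ref{lem:Paechter} bound it above by $4$; hence $\pi_{2n+1}^s(SO/SO(2n))\to\pi_1(\MTtheta_n)$ is injective and $\partial_*=0$. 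Some extra input of this kind is unavoidable to close your argument; the cohomological detection alone does not suffice.

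A secondary problem is the final step you yourself flag. Your claim that the only term of $v_{n+1}$ not involving $w_1,\dots,w_n$ is $w_{n+1}$ is not correct in general: for instance $v_3=w_1w_2$ has no $w_3$ term, and in fact $w_{n+1}$ occurs in $v_{n+1}$ precisely when $n+1$ is a power of two. So for $n+1$ not a power of two the Wu class argument is easy (no Hopf-invariant input needed, and for $n$ even it is easier still, since the skew intersection form on $H^{n+1}$ forces $\chi(M)$ even), whereas the genuinely hard cases are $n+1=2^s\geq 16$, which your appeal to Hopf invariant one (i.e.\ $n+1\neq 2,4,8$) does not touch: there $v_{n+1}=w_{n+1}$ on a $BO\langle n\rangle$-manifold and one would need something like Adams's decomposition of $Sq^{2^s}$ into secondary operations, applied at the level of the Thom spectrum, to conclude $w_{2n+2}[M]=0$. (The transgression $x_{n+1}\mapsto w_{n+2}$ you invoke lives in the fibration $SO/SO(2n)\to BSO(2n)\to BSO$ and gives no direct information about classes on $M$ or on $BO\langle n\rangle$.) The Stiefel--Whitney statement is true --- but in the paper it comes out as a consequence of the lemma, not as an ingredient of its proof.
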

\begin{proof}
  By Corollary \ref{cor:Kernel} the kernel of the map $t$, and hence
  $s_*$, has cardinality at least 4, and so the kernel of $s_* :
  \pi_1(\MTtheta_n) \to \Omega_{2n+1}^{\langle n \rangle}$ also has
  cardinality at least 4. On the other hand, the exact sequence and Lemma \ref{lem:Paechter} shows that it has cardinality at most 4, hence it has cardinality precisely 4, so $\partial$ is zero.
\end{proof}

\begin{lemma}\label{lem:PartialOnto}
  If $n = 3$ or $7$ then $\partial_* : \Omega_{2n+2}^{\langle n
    \rangle} \to \pi_{2n+1}^s(SO / SO(2n))$ is surjective (so $s_*$ is
  injective).
\end{lemma}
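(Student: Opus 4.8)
The plan is to combine the long exact sequence \eqref{eq:LES} with Lemma~\ref{lem:HurewiczStuff}. Since $n=3$ and $n=7$ are both odd, Lemma~\ref{lem:Paechter} identifies $\pi_{2n+1}^s(SO/SO(2n))$ with $\bZ/4$; its only proper subgroups are $0$ and the subgroup of order $2$, and this subgroup of order $2$ is precisely the kernel of the mod $2$ Hurewicz homomorphism $h : \pi_{2n+1}^s(SO/SO(2n))\to H_{2n+1}(SO/SO(2n);\bF_2)$, which is surjective by Lemma~\ref{lem:HurewiczStuff}(\ref{it:HurewiczStuff:3}) and whose target is one-dimensional by Lemma~\ref{lem:HurewiczStuff}(\ref{it:HurewiczStuff:1}). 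So it will suffice to produce a single class $\beta\in\Omega_{2n+2}^{\langle n\rangle}$ with $h(\partial_*\beta)\neq 0$: such a $\beta$ maps to a generator of $\bZ/4$, hence $\partial_*$ is onto, and the injectivity of $s_*$ is then immediate from exactness of \eqref{eq:LES}.

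The crucial step will be to recognise $h\circ\partial_*$ as a characteristic number. After the identifications already made, $\partial_*$ is the boundary homomorphism on homotopy groups of the cofibre sequence $\mathbf{F}_n\to\MTtheta_n\to\Sigma^{-2n}\mathbf{MO}\langle n\rangle$, so by naturality of the Hurewicz homomorphism $h\circ\partial_*$ equals the mod $2$ Hurewicz map $\Omega_{2n+2}^{\langle n\rangle}=\pi_{2n+2}(\mathbf{MO}\langle n\rangle)\to H_{2n+2}(\mathbf{MO}\langle n\rangle;\bF_2)$ followed by the homology boundary map of this cofibre sequence; and the latter is $\bF_2$-linearly dual to the cohomology boundary map computed in Lemma~\ref{lem:HurewiczStuff}(\ref{it:HurewiczStuff:1}), namely $x_{2n+1}\mapsto w_{2n+2}\cdot u$. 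Tracing this through the Thom isomorphism should show that for a closed $BO\langle n\rangle$-manifold $M^{2n+2}$ representing $\beta$, the value $h(\partial_*\beta)$ is the pairing of $w_{2n+2}\cdot u$ against the Hurewicz image of $M$, i.e.\ the top Stiefel--Whitney number $\langle w_{2n+2}(TM),[M]\rangle$, which by Wu's formula equals $\chi(M)\bmod 2$. This is the one point that needs care, because the \emph{normal} top class $w_{2n+2}(\nu_M)$ vanishes identically on every $BO\langle n\rangle$-manifold of dimension $2n+2$: there the total Wu class is $v(M)=1+v_{n+1}$, so $w_{2n+2}(TM)=\Sq^{n+1}v_{n+1}=v_{n+1}^2=w_{n+1}(TM)^2$, and hence $w_{2n+2}(\nu_M)=w_{2n+2}(TM)+w_{n+1}(TM)^2=0$; one must therefore keep the Thom-isomorphism and suspension conventions straight to ensure that the invariant detecting $\partial_*$ is the tangential number $\langle w_{2n+2}(TM),[M]\rangle=\chi(M)\bmod 2$, not the vanishing normal one.

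Granting this, it remains to exhibit, for $n=3$ and for $n=7$, a closed $(2n+2)$-manifold carrying a $BO\langle n\rangle$-structure and having odd Euler characteristic. For $n=3$ I would take the quaternionic projective plane $M=\mathbb{H}P^{2}$: it is simply connected with $H^2(M)=0$, hence Spin, i.e.\ a $BO\langle 3\rangle$-manifold, and $\chi(\mathbb{H}P^{2})=3$. For $n=7$ I would take the Cayley projective plane $M=\mathbb{O}P^{2}$: it is $7$-connected with $H^2(M)=H^4(M)=0$, so $w_2=0$ and $p_1=0$, hence it carries a String, i.e.\ $BO\langle 7\rangle$, structure, and $\chi(\mathbb{O}P^{2})=3$. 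In each case $h(\partial_*[M])=\chi(M)\bmod 2=1\neq 0$, which would complete the proof. These are the natural choices: for $n=3,7$ there is an $(n+1)$-plane bundle over $S^{n+1}$ of Euler number $\pm1$---namely from the quaternionic and octonionic Hopf fibrations---and $M$ is its associated disc bundle capped off by a $(2n+2)$-disc, which is why the argument singles out $n=3,7$ among $n\geq 3$. The main obstacle is accordingly the bookkeeping of the second paragraph; everything else is contained in Lemma~\ref{lem:HurewiczStuff} or is the elementary input that $\mathbb{H}P^{2}$ and $\mathbb{O}P^{2}$ carry the required tangential structures and have Euler characteristic $3$.
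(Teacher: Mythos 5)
Your proposal is correct and follows essentially the same route as the paper: the same Hurewicz-map diagram, the same identification of $h\circ\partial_*$ with the functional $[W]\mapsto\langle w_{2n+2}(TW),[W]\rangle$, and the same witnesses $\bH P^2$ and $\bO P^2$ with odd Euler characteristic. The convention point you flag does resolve as you expect: since the target of $s$ is (a desuspension of) the Thom spectrum of \emph{minus} the tautological bundle over $BO\langle n\rangle$, the structure map of a bordism class classifies its stable \emph{tangent} bundle, so $w_{2n+2}\cdot u$ evaluates to the tangential, not the (identically vanishing) normal, Stiefel--Whitney number, which is exactly how the paper's proof reads it.
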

\begin{proof}
Consider the diagram
  \begin{equation*}
    \xymatrix{
      \pi_{2n+2}(\mathbf{MO}\langle n \rangle) \ar[r]^-{\partial_*} \ar[d]^h& \pi_{2n+1}^s(SO/SO(2n)) \cong \bZ/4 \ar@{->>}[d]^h\\
      H_{2n+2}(\mathbf{MO}\langle n \rangle;\bF_2) \ar[r] & H_{2n+1}(SO/SO(2n);\bF_2) \cong \bZ/2,
    }
  \end{equation*}
  where $h$ denotes the Hurewicz map, and the surjectivity on the
  right is by Lemma \ref{lem:HurewiczStuff} (\ref{it:HurewiczStuff:3}). By Lemma \ref{lem:HurewiczStuff} (\ref{it:HurewiczStuff:1}), the isomorphism  $H_{2n+1}(SO/SO(2n);\bF_2) \cong \bZ/2$ is given by evaluating against the class
  $x_{2n+1} \in H^{2n+1}(SO/SO(2n);\bF_2)$, which under $\partial^*$
  corresponds to $w_{2n+2} \cdot u$. Thus the composition $\partial_* \circ h$
  can be identified with the functional $\Omega_{2n+2}^{\langle n
    \rangle} \to \bZ/2$ given by $[W^{2n+2}] \mapsto \langle [W],
  w_{2n+2}(TW)\rangle$.

  The manifolds $[\bH P^2] \in \Omega_{8}^{\langle 3 \rangle}$ and
  $[\bO P^2] \in \Omega_{16}^{\langle 7 \rangle}$ have Euler
  characteristic 3, so non-trivial top Stiefel--Whitney class. Thus
  $\partial_* \circ h = h \circ \partial_*$ is surjective in these
  cases, but it follows that $\partial_*$ must then be surjective.
\end{proof}

\subsection{Proof of Theorem \ref{thm:Main}}

For the surjectivity part of the statement, we have already explained
how Kreck's result (\cite{KreckAut}) implies the surjectivity of the
homomorphism $f: \Gamma^n_{g,1} \to H_1(\Aut(Q_{W_g^{2n}}))$.  To see
surjectivity of $t \oplus f$ it suffices to see that the restriction
$t\vert_{\Ker(f)}: \Ker(f) \to \Omega^{\langle n\rangle}_{2n+1}$ is
surjective, but that follows from Lemma~\ref{lem:Theta}.

It remains to see that the map $t \oplus f:
H_1(\Gamma_{g,1}^n) \to \Omega^{\langle n\rangle}_{2n+1} \oplus
H_1(\Aut(Q_{W_g^{2n}}))$ is injective for $n\neq 2$ and $g \geq 5$.
For $n=3$ or $7$, the second summand vanishes, so it suffices to prove
that $t$ is injective, which we did in Lemma \ref{lem:PartialOnto}.
In the remaining cases, Lemma~\ref{lem:PartialZero} gives a short
exact sequence fitting into the diagram
\begin{equation*}
\xymatrix{
0 \ar[r]& \pi_{2n+1}^s(SO/SO(2n)) 
\ar[d]
 \ar@{^(->}[r] & \pi_1(\MTtheta_n) \ar@{->>}[r]^-{t} \ar@{=}[d]  &
 \Omega_{2n+1}^{\langle n \rangle} \ar[r]& 0\\
& 
H_1(\mathrm{Aut}(Q_{W_g}))
& H_1(\Gamma_{g,1}^n). \ar@{->>}[l]_-f 
}
\end{equation*}
By the proof of Corollary~\ref{cor:Kernel}, the map
$\pi_{2n+1}^s(SO/SO(2n)) \to H_1(\mathrm{Aut}(Q_{W_g}))$ is an
isomorphism, giving a splitting of the exact sequence in the top row
of the diagram.  This proves Theorem \ref{thm:Main} in these cases.


\section{A filtration of the sphere spectrum}\label{sec:Filtr}

In this section we shall describe and study a filtration of the sphere
spectrum, and a resulting filtration of the stable homotopy groups of
spheres. This plays a role in computing the cobordism groups
$\Omega_{2n+1}^{\langle n \rangle}$ in terms of the stable homotopy
groups of spheres and the $J$-homomorphism.

Recall that $BO\langle n\rangle \to BO$ denotes the $n$-connected
cover, and there is an associated Thom spectrum $\mathbf{MO}\langle n
\rangle$.  Thus $\mathbf{MO}\langle 0\rangle = \mathbf{MO}$, the
spectrum representing unoriented cobordism theory, $\mathbf{MO}\langle
1\rangle = \mathbf{MSO}$, $\mathbf{MO}\langle 2\rangle =
\mathbf{MO}\langle 3\rangle = \mathbf{MSpin}$, etc.  There are maps
\begin{equation*}
  \mathbf{MO} = \mathbf{MO}\langle 0 \rangle \lla \mathbf{MO}\langle 1
  \rangle \lla \mathbf{MO}\langle 2 \rangle \lla \mathbf{MO}\langle 3
  \rangle \lla \cdots 
\end{equation*}
with inverse limit $\mathbf{S}$, the sphere spectrum. We write
$\iota_n : \mathbf{S} \to \mathbf{MO}\langle n \rangle$, and define a
filtration of the stable homotopy groups of spheres by
$$F^n \pi_k(\mathbf{S}) = \mathrm{Ker}\left(\pi_k(\iota_n) : \pi_k(\mathbf{S}) \to \pi_k(\mathbf{MO}\langle n \rangle)\right).$$
Let us write $\overline{\mathbf{MO}\langle n \rangle}$ for the
homotopy cofibre of $\mathbf{S} \to \mathbf{MO}\langle n \rangle$.

\begin{lemma}\label{lem:filt}\mbox{}
  \begin{enumerate}[(i)]
  \item\label{it:1} $F^n \pi_k(\mathbf{S}) =0$ for $k < n$.
  \item\label{it:2} $F^n \pi_k(\mathbf{S})$ contains the image of $J : \pi_k(O) \to \pi_k(\mathbf{S})$ for $k \geq n$.
  \item\label{it:3} $F^n \pi_k(\mathbf{S})$ is equal to the image of $J : \pi_k(O) \to \pi_k(\mathbf{S})$ for $2n \geq k \geq n$.
  \end{enumerate}
\end{lemma}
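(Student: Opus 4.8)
The plan is to analyse the Thom spectra $\mathbf{MO}\langle n\rangle$ and their cofibres $\overline{\mathbf{MO}\langle n\rangle}$ via connectivity and via the Atiyah--Hirzebruch/Thom isomorphism, and to use the $J$-homomorphism's geometric meaning to pin down $F^n\pi_k(\mathbf{S})$ exactly in the stated range.

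\emph{Part (i).} First I would observe that the unit map $\iota_n : \mathbf{S} \to \mathbf{MO}\langle n\rangle$ factors the Thom class and is highly connected: since $BO\langle n\rangle \to BO$ is $n$-connected, the basepoint inclusion $\ast \to BO\langle n\rangle$ is $(n-1)$-connected after one checks that $BO\langle n\rangle$ is $(n-1)$-connected by definition. Thomifying, the map $\mathbf{S} \to \mathbf{MO}\langle n\rangle$ is $n$-connected, i.e.\ $\overline{\mathbf{MO}\langle n\rangle}$ is $n$-connected, so $\pi_k(\iota_n)$ is an isomorphism for $k<n$ (and surjective for $k=n$); in particular its kernel $F^n\pi_k(\mathbf{S})$ vanishes for $k<n$. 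This is the easy part.

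\emph{Part (ii).} For the containment of $\mathrm{Im}(J)$, I would use the cobordism-theoretic description of $J$ already recalled in the paper: $J(f)$ for $f : S^k \to SO$ is the $k$-sphere with its bounding stable framing modified by $f$. A framed manifold has a canonical $BO\langle n\rangle$-structure for \emph{any} $n$ once it is stably framed, and the relevant point — exactly as in the paper's discussion of $\rho\circ J$ being trivial — is that the $BO\langle n\rangle$-structure on $S^k$ obtained this way extends over the bounding disc $D^{k+1}$ because the map $BO\langle n\rangle \to BO$ is $n$-co-connected and $k \geq n$ (so the obstruction to extending lives in groups that vanish in this range). Hence $\iota_{n*}(J(f)) = 0$ in $\pi_k(\mathbf{MO}\langle n\rangle) = \Omega_k^{\langle n\rangle}$, giving $\mathrm{Im}(J) \subseteq F^n\pi_k(\mathbf{S})$ for $k \geq n$. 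I would be slightly careful to phrase the extension argument purely in terms of lifting a classifying map $D^{k+1} \to BO$ to $BO\langle n\rangle$ rel boundary, which is unobstructed since $D^{k+1}$ is contractible; the content is that the induced framing-twist on the boundary is killed.

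\emph{Part (iii).} This is the main obstacle. I want the reverse inclusion $F^n\pi_k(\mathbf{S}) \subseteq \mathrm{Im}(J)$ for $2n \geq k \geq n$. The strategy is to identify $F^n\pi_k(\mathbf{S})$ with the image of the connecting map $\partial : \pi_{k+1}(\overline{\mathbf{MO}\langle n\rangle}) \to \pi_k(\mathbf{S})$ from the cofibre sequence $\mathbf{S} \to \mathbf{MO}\langle n\rangle \to \overline{\mathbf{MO}\langle n\rangle}$. In the range $k \leq 2n$ the spectrum $\overline{\mathbf{MO}\langle n\rangle}$ is $n$-connected, and I would compute its bottom homotopy/homology: by the Thom isomorphism $H_*(\mathbf{MO}\langle n\rangle) \cong H_*(BO\langle n\rangle)$, and $BO\langle n\rangle$ has $H_*$ agreeing with a point through degree $n$ and first nonzero reduced homology in degree $n+1$ (governed by $\pi_n(O)$), so $\overline{\mathbf{MO}\langle n\rangle}$ looks, through dimension $2n+1$, like a wedge built from a single "cell" in degree $n+1$ coming from $\pi_{n+1}(BO\langle n\rangle) \cong \pi_n(O)$ — more precisely I would use that $BO\langle n+1\rangle \to BO\langle n\rangle$ is $(n+1)$-connected and that the fibre of $BO\langle n\rangle \to BO$ has the homotopy type of $\Omega^\infty$ of a spectrum whose bottom cell is $\Sigma^{n+1}$ times $H\pi_n(O)$-ish data. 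The cleanest route: the cofibre sequence $\mathbf{MO}\langle n+1\rangle \to \mathbf{MO}\langle n\rangle \to ?$ together with the fibration sequence $BO\langle n+1\rangle \to BO\langle n\rangle \to K(\pi_n(O), n)$ (since $\pi_n(O)$ is the first homotopy group of $BO\langle n\rangle$) identifies, through the range $\ast \leq 2n+1$, the map $\mathbf{S} \to \mathbf{MO}\langle n\rangle$ with the inclusion of the sphere into a spectrum whose cofibre is $(\Sigma^{n}\mathbf{H}\pi_n(O))$-like, and the connecting map $\pi_{k+1}(\text{cofibre}) \to \pi_k(\mathbf{S})$ is precisely the $J$-homomorphism $\pi_k(O) \to \pi_k(\mathbf{S})$ composed with a suspension/Hurewicz isomorphism that is iso in this range. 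I would then conclude $F^n\pi_k(\mathbf{S}) = \mathrm{Im}(\partial) = \mathrm{Im}(J : \pi_k(O) \to \pi_k(\mathbf{S}))$, combining with (ii). The delicate points are: (a) making the identification of $\partial$ with $J$ on the nose (not just up to the relevant indeterminacy), for which I would trace through the standard fact that the attaching map of the bottom cell of $\mathbf{MO}\langle n\rangle/\mathbf{S}$ over $\mathbf{S}$ is detected by the $J$-homomorphism — this is essentially the statement that the $e$-invariant / the $d$-invariant of the cofibre sequence recovers $J$; (b) checking the connectivity bookkeeping so that all the "agrees in a range" statements are valid precisely when $k \leq 2n$, which forces the dimension bound in (iii).
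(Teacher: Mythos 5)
Your parts (i) and (ii) are correct and coincide with the paper's argument: (i) is the $n$-connectivity of $\overline{\mathbf{MO}\langle n \rangle}$, and (ii) is exactly the extension of the $BO\langle n\rangle$-structure over $D^{k+1}$ using that $BO\langle n\rangle \to BO$ is $n$-co-connected.

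Part (iii), however, has a genuine gap. Your plan rests on the claim that, through dimension $2n+1$, the cofibre $\overline{\mathbf{MO}\langle n\rangle}$ is equivalent to a single ``cell'' in degree $n+1$ built from $\pi_n(O)$, so that the connecting map $\partial : \pi_{k+1}(\overline{\mathbf{MO}\langle n\rangle}) \to \pi_k(\mathbf{S})$ is identified with $J$ for all $n \leq k \leq 2n$. This is false: by the Thom isomorphism $H_*(\overline{\mathbf{MO}\langle n\rangle}) \cong \widetilde{H}_*(BO\langle n\rangle)$, and $BO\langle n\rangle$ has plenty of reduced homology in degrees strictly between $n+1$ and $2n+1$ (for instance $n=3$, where $BO\langle 3\rangle = B\mathrm{Spin}$ has nontrivial mod $2$ homology in degrees $6$ and $7$ coming from $w_6$ and $w_7$, well below $2n+1=7$). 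So the identification of $\partial$ with $J$ only holds on the bottom homotopy group, i.e.\ it proves the case $k=n$; for $k>n$ the groups $\pi_{k+1}(\overline{\mathbf{MO}\langle n\rangle})$ receive contributions from these higher cells, and nothing in your argument controls where $\partial$ sends them---which is precisely the content of the statement. The correct equality $F^n\pi_k(\mathbf{S}) = \mathrm{Im}(\partial)$ is fine, but the inclusion $\mathrm{Im}(\partial) \subseteq \mathrm{Im}(J)$ in the whole range $n \leq k \leq 2n$ is not formal. The paper proves it geometrically, following Kervaire--Milnor: a framed $M^k$ representing a class in $F^n\pi_k(\mathbf{S})$ bounds a $BO\langle n\rangle$-manifold $W^{k+1}$ with $k+1 \leq 2n+1$; surgery on the interior (legitimate because a $BO\langle n\rangle$-structure lets one frame embedded spheres of dimension $\leq n$) makes $W$ $\lfloor k/2\rfloor$-connected, and a handle decomposition with no handles of index between $1$ and $\lfloor k/2\rfloor$ exhibits $W' \setminus D^{k+1}$ as a cobordism from $M$ to $S^k$ built from handles of index $\leq n$, over which the framing extends since $EO \to BO\langle n\rangle$ is $n$-connected; hence $M$ is framed cobordant to a framed sphere, i.e.\ lies in $\mathrm{Im}(J)$. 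If you want to keep a homotopy-theoretic flavour you would need a genuinely different input (this metastable comparison is where the dimension bound $k \leq 2n$ really enters), not just the bottom-cell computation.
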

\begin{proof}
  The spectrum $\overline{\mathbf{MO}\langle n \rangle}$ is
  $n$-connected, and so $\pi_k(\mathbf{S}) \to
  \pi_k(\mathbf{MO}\langle n \rangle)$ is injective for $k < n$; this
  establishes (\ref{it:1}).

  In the cobordism-theoretic interpretation of the homotopy groups of
  spheres, $J(\alpha : S^k \to O)$ is given by the manifold $S^k$ with
  the framing given by twisting the standard (bounding) framing of
  $S^k$ using $\alpha$ to obtain a new framing $\xi_\alpha$.
  \begin{equation*}
    \xymatrix{
      S^k \ar[d] \ar[r]^-{\xi_\alpha} & EO \ar[r] & BO\langle n \rangle \ar[d]\\
      D^{k+1} \ar[rr] & & BO
    }
  \end{equation*}
  While this framing cannot necessarily be extended to $D^{k+1}$, the
  associated $BO\langle n \rangle$-structure can be extended as long
  as $k \geq n$, as the right-hand map is $n$-co-connected; this
  establishes (\ref{it:2}).

  Let $(M^k, \xi)$ be a framed cobordism class representing an element
  of $F^n \pi_k(\mathbf{S})$, so considered as a $BO\langle n
  \rangle$-manifold $M$ bounds a $BO\langle n \rangle$-manifold
  $W$. Now $k+1 \leq 2n+1$ so (similarly to the proof of Lemma \ref{lem:Theta}) by the techniques of \cite[Theorems 5.5
  and 6.6]{KervaireMilnor} we may perform surgery on the interior of $W$ to obtain a new
  $BO\langle n \rangle$-manifold $W'$ which is $\lfloor k/2
  \rfloor$-connected, with the same framed boundary $M$. We may then
  find a handle structure on $W'$ having no handles of index between 1
  and $\lfloor k/2 \rfloor$, and so $W' \setminus D^{k+1}$ is a
  $BO\langle n \rangle$-cobordism from $M$ to $S^k$ which may be
  obtained from $M$ by attaching handles of index at most $k-\lfloor
  k/2 \rfloor \leq n$. As $EO \to BO\langle n \rangle$ is
  $n$-connected, it follows that the framing $\xi$ on $M$ may be
  extended to $W' \setminus D^{k+1}$, and so $(M, \xi)$ is framed
  cobordant to $(S^k, \zeta)$ for some framing $\zeta$ of the
  sphere. But those cobordism classes represented by spheres with some
  framing are precisely the image of the $J$-homomorphism; this
  establishes (\ref{it:3}).
\end{proof}

We wish to understand the group $\Omega_{2n+1}^{\langle n \rangle} = \pi_{2n+1}(\mathbf{MO}\langle n \rangle)$, which is related to $F^n \pi_{2n+1}(\mathbf{S})$ and lies just outside of the range treated in Lemma \ref{lem:filt} (\ref{it:3}). However, the groups $F^n \pi_k(\mathbf{S})$ for $k > 2n$ have been studied, though not quite expressed in this form, by Stolz \cite{Stolz}. Let us explain his technique. 

For $n \geq 2$ the universal (virtual) bundle $\gamma\langle n \rangle$ over
$BO\langle n \rangle$ is Spin, and so the Thom spectrum
$\mathbf{MO}\langle n \rangle$ has a KO-theory Thom class,
$\lambda_n$. There is thus a KO-theory class $\gamma\langle n \rangle \cdot \lambda_n
\in KO^0(\mathbf{MO}\langle n \rangle)$, which we represent by a map
$\alpha_n: \mathbf{MO}\langle n \rangle \to \mathbf{ko}$ to the
connective KO-theory spectrum. As the bundle $\gamma\langle n \rangle \in
KO^0(BO\langle n \rangle)$ becomes trivial when restricted to a point,
the class lifts to $\gamma\langle n \rangle \in KO^0(BO \langle n\rangle,\ast)$, and $\alpha_n$ factors through a map $\alpha'_n :
\overline{\mathbf{MO}\langle n \rangle} \to \mathbf{ko}$, and as
$\overline{\mathbf{MO}\langle n \rangle}$ is $n$-connected this lifts
further to a map
\begin{equation*}
  \overline{\alpha}_n : \overline{\mathbf{MO}\langle n \rangle} \lra
  \mathbf{ko}\langle n \rangle.
\end{equation*}
Stolz defines $A[n+1]$ to be the homotopy fibre of
$\overline{\alpha}_n$. Under the Thom isomorphism we have 
\begin{equation*}
  H^*(\overline{\mathbf{MO}\langle n \rangle}) \cong H^*(BO \langle n
  \rangle, \ast) = H^*(\Omega^\infty(\mathbf{ko}\langle n \rangle), \ast),
\end{equation*}
and using the known cohomology of $BO \langle n \rangle$ and
$\mathbf{ko}\langle n \rangle$ as modules over the Steenrod algebra
Stolz establishes the following.

\begin{theorem}[Stolz \cite{Stolz}]\label{thm:Stolz1}
The spectrum $A[n+1]$ is $(2n+1)$-connected, and
$$\pi_{2n+2}(A[n+1]) = \begin{cases}
\bZ & n+1 \equiv 0, 4 \mod 8\\
\bZ/2 & n+1 \equiv 1, 2 \mod 8\\
0 & \text{otherwise}.
\end{cases}$$
\end{theorem}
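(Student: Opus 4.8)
The plan is to reduce the statement to a homology computation. Granting for the moment the $(2n+1)$-connectivity of $A[n+1]$, the Hurewicz theorem identifies $\pi_{2n+2}(A[n+1])$ with $H_{2n+2}(A[n+1];\bZ)$, the first potentially non-zero homology group. From the homology long exact sequence of the fibre sequence
$$A[n+1] \lra \overline{\mathbf{MO}\langle n \rangle} \overset{\overline{\alpha}_n}\lra \mathbf{ko}\langle n \rangle$$
both assertions then follow from sufficient control of $(\overline{\alpha}_n)_*$ on homology in degrees $\leq 2n+3$: the spectrum $A[n+1]$ is $(2n+1)$-connected as soon as $(\overline{\alpha}_n)_*$ is an isomorphism in degrees $\leq 2n+1$ and surjective in degree $2n+2$, and then $H_{2n+2}(A[n+1];\bZ)$ is determined, up to an extension, by $\Ker\big((\overline{\alpha}_n)_*\text{ in degree }2n+2\big)$ and $\Coker\big((\overline{\alpha}_n)_*\text{ in degree }2n+3\big)$.

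Next I would compute the cohomology of both sides. Via the Thom isomorphism $H^*(\overline{\mathbf{MO}\langle n \rangle};\bF_2) \cong \widetilde{H}^*(BO\langle n\rangle;\bF_2)$ as modules over the Steenrod algebra $\mathcal{A}$ (and similarly at odd primes); in the range $* \leq 2n+3$ this module is small and is read off from the Serre spectral sequence of $BO\langle n \rangle \to BO$, exactly as in the proof of Lemma~\ref{lem:HurewiczStuff}, with bottom class in degree $n+1$. On the other side $H^*(\mathbf{ko};\bF_2) = \mathcal{A}/\!\!/\mathcal{A}(1)$ is classical (and $\mathbf{ko}$ splits into Adams summands at odd primes), and $H^*(\mathbf{ko}\langle n\rangle;\bF_2)$ is obtained by passing to the $n$-connected cover; through the relevant range it again carries a single $\mathcal{A}$-generator near the bottom together with whatever further generator Bott periodicity forces just above degree $2n+1$. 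The map $\overline{\alpha}_n$ realises, through a range, the identification $H^*(BO\langle n\rangle,*) = H^*(\Omega^\infty\mathbf{ko}\langle n\rangle,*)$ recorded before the statement: it is an isomorphism on $\pi_{n+1}$, where both groups are $\pi_n(O)$, and the $KO$-theory Thom class pins down its behaviour above that. Comparing the two $\mathcal{A}$-modules degree by degree shows $\overline{\alpha}_n^*$ is an isomorphism for $* \leq 2n+1$, which gives the connectivity statement.

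The group $\pi_{2n+2}(A[n+1])$ is then governed by the failure of $\overline{\alpha}_n$ to remain a homology isomorphism in degrees $2n+2$ and $2n+3$, and here the residue of $n+1$ modulo $8$ enters: both the $\mathcal{A}$-module structure of $\mathbf{ko}\langle n\rangle$ just above degree $2n+1$ and the Bott-periodic group $\pi_{2n+2}(\mathbf{ko})$ are $8$-periodic in $n$, and together they dictate whether there is a ``missing'' class in degree $2n+2$ and whether it survives rationally. To tell the free case from the $\bZ/2$ case one cannot argue mod $2$ alone: I would compare the rationalisations, where $\overline{\mathbf{MO}\langle n\rangle}_\bQ$ and $\mathbf{ko}\langle n\rangle_\bQ$ are wedges of rational Eilenberg--MacLane spectra and $\overline{\alpha}_n$ is computed from Pontryagin classes and the Pontryagin character; a one-dimensional rational kernel in degree $2n+2$ occurs precisely when $n+1 \equiv 0,4 \bmod 8$, producing the summand $\bZ$, whereas for $n+1 \equiv 1,2 \bmod 8$ the obstruction is $2$-torsion of order $2$ (arising from the $2$-torsion in $H_*(BO\langle n\rangle;\bZ)$) and it vanishes otherwise. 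A check at odd primes, where the analogous comparison is cleaner, confirms there is no further torsion.

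The main obstacle is the precise bookkeeping of $H^*(BO\langle n\rangle;\bF_2)$ and $H^*(\mathbf{ko}\langle n\rangle;\bF_2)$ as $\mathcal{A}$-modules in exactly the degrees $2n+1$, $2n+2$, $2n+3$ --- the window where $\overline{\alpha}_n$ stops being an isomorphism --- together with the verification that $\overline{\alpha}_n^*$ is genuinely the expected map, which uses the $KO$-theory Thom class rather than an abstract module comparison. A secondary subtlety is that the cohomology of the \emph{spectrum} $\mathbf{ko}\langle n\rangle$ agrees with that of its infinite loop space $BO\langle n\rangle$ only in a range, and degree $2n+2$ sits near the edge of it, so one must rule out interference from unstable classes. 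Once these module computations are in hand, the rational and odd-primary arguments that separate the three cases are comparatively routine.
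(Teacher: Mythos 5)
The paper does not actually prove this statement: it is quoted verbatim from Stolz, with only the remark that he establishes it ``using the known cohomology of $BO\langle n\rangle$ and $\mathbf{ko}\langle n\rangle$ as modules over the Steenrod algebra''. Your outline is in the same spirit as Stolz's actual argument (reduce to the bottom homotopy group of the bounded-below spectrum $A[n+1]$ via stable Hurewicz, then analyse $\overline{\alpha}_n$ through the critical range of degrees), so as a strategy it is the right one, and your rational analysis is genuinely checkable: in degree $2n+2$ the source has the extra Pontryagin monomial $p_{(n+1)/4}^2$ exactly when $n+1\equiv 0 \bmod 4$, while the target has a single Bott class, which accounts for the $\bZ$ in the cases $n+1\equiv 0,4 \bmod 8$.

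However, as a proof the proposal has a real gap: the $2$-primary part of the case analysis is asserted rather than derived, and it \emph{is} the content of the theorem. Deciding between $\bZ/2$ and $0$ for the various odd residues of $n+1 \bmod 8$ requires knowing precisely the $\mathcal{A}$-module structure of $H^*(\mathbf{ko}\langle n\rangle;\bF_2)$ and $H^*(\overline{\mathbf{MO}\langle n\rangle};\bF_2)$ in degrees $2n+1$ through $2n+3$ --- which differs according to $n \bmod 8$ because the bottom few cells of $\mathbf{ko}\langle n\rangle$ over its lowest class are governed by $\pi_*(\mathbf{ko})$, not merely ``a generator forced by Bott periodicity'' --- together with the verification that $\overline{\alpha}_n^*$ does what the $KO$-Thom class suggests there, and finally the resolution of the integral extension and possible higher $2$-torsion via Bocksteins. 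None of this is carried out, and your phrase ``comparatively routine'' undersells it: this bookkeeping is exactly where the $8$-fold periodic answer comes from, and it occupies the bulk of Stolz's computation. A smaller point: your worry about unstable classes is best dissolved rather than ruled out --- one should work throughout with the spectrum cohomology of the connective cover $\mathbf{ko}\langle n\rangle$ (computable from $H^*(\mathbf{ko};\bF_2)\cong \mathcal{A}/\!\!/\mathcal{A}(1)$ by killing homotopy groups) and with $H^*(\overline{\mathbf{MO}\langle n\rangle};\bF_2)\cong \widetilde{H}^*(BO\langle n\rangle;\bF_2)$ via the Thom isomorphism, so the infinite loop space $BO\langle n\rangle$ only enters as a convenient name for the latter and no stable range comparison is needed.
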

Let us write $J : \pi_k(O) \to \pi_k(\mathbf{S})$ for the
$J$-homomorphism. For $\alpha \in \pi_k(O)$, $J(\alpha)$ is given by
the stably framed manifold obtained by changing the bounding framing
on $S^k$ using $\alpha$. As explained in the proof of Lemma
\ref{lem:filt} (\ref{it:2}), the associated $BO\langle n
\rangle$-structure extends canonically over $D^{k+1}$ as long as $k
\geq n$, which gives a map
\begin{equation*}
  \overline{J} : \pi_k(O) \lra \pi_{k+1}(\overline{\mathbf{MO}\langle
    n \rangle})
\end{equation*}
such that $\partial \circ \overline{J} = J$. The composition
$$\pi_k(O) \overset{\overline{J}}\lra \pi_{k+1}(\overline{\mathbf{MO}\langle n \rangle}) \overset{\overline{\alpha_n}}\lra \pi_{k+1}(\mathbf{ko}\langle n \rangle)$$
is an isomorphism (cf.\ \cite[Lemma 3.7]{Stolz}), and it follows from
the commutative diagram
\begin{equation*}
\xymatrix{
 & & \pi_{2n+2}({\mathbf{MO}\langle n \rangle}) \ar[d]\\
0 \ar[r]& \pi_{2n+2}(A[n+1]) \ar[r]& \pi_{2n+2}(\overline{\mathbf{MO}\langle n \rangle}) \ar[r]^-{\overline{\alpha_n}} \ar[d]^-\partial& \pi_{2n+2}(\mathbf{ko}\langle n \rangle) \ar[r]& 0\\
 & & \pi_{2n+1}(\mathbf{S}) \ar[d] & \pi_{2n+1}(O) \ar[l]_-J \ar[lu]_-{\overline{J}} \ar@{=}[u]\\
 & & \pi_{2n+1}({\mathbf{MO}\langle n \rangle}) \ar[d]\\
 & & 0
}
\end{equation*}
that there is an exact sequence
\begin{equation*}\label{eq:exactseq}
\pi_{2n+2}(A[n+1]) \overset{\sigma}\lra \mathrm{Coker}(J)_{2n+1} \lra \pi_{2n+1}({\mathbf{MO}\langle n \rangle}) \lra 0.
\end{equation*}
Hence, given the description of $\pi_{2n+2}(A[n+1])$ in Theorem \ref{thm:Stolz1}, it follows that the quotient $F^n \pi_{2n+1}(\mathbf{S}) / \mathrm{Im}(J)$ is cyclic. Stolz finds various conditions under which the quotient $F^n \pi_{2n+1}(\mathbf{S}) / \mathrm{Im}(J)$ is in fact trivial, i.e.\ the map $\sigma$ is zero.

\begin{theorem}[Stolz \cite{Stolz}]\label{thm:Stolz2}
If either
\begin{enumerate}[(i)]
\item $n+1 \equiv 2 \mod 8$ and $n+1 \geq 18$,

\item $n+1 \equiv 1 \mod 8$ and $n+1 \geq 113$,

\item $n+1 \not\equiv 0, 1,2, 4 \mod 8$,
\end{enumerate}
then $F^n \pi_{2n+1}(\mathbf{S}) = \mathrm{Im}(J)_{2n+1}$. 

If $n+1 = 4\ell$ then $F^n \pi_{2n+1}(\mathbf{S}) / \mathrm{Im}(J)$ is generated by the exotic sphere $\Sigma$ which is the boundary of the manifold obtained by plumbing together two copies of the linear $4\ell$-dimensional disc bundle over $S^{4\ell}$ having trivial Euler class and representing a generator of $\pi_{4\ell}(BO)\cong\bZ$.
\end{theorem}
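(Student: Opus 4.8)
The plan is to read everything off the exact sequence
\[
\pi_{2n+2}(A[n+1]) \overset{\sigma}\lra \mathrm{Coker}(J)_{2n+1} \lra \pi_{2n+1}(\mathbf{MO}\langle n \rangle) \lra 0
\]
established above, together with the computation of $\pi_{2n+2}(A[n+1])$ in Theorem~\ref{thm:Stolz1}. Since $\mathrm{Im}(J)_{2n+1} \subseteq F^n\pi_{2n+1}(\mathbf{S})$ by Lemma~\ref{lem:filt}~(\ref{it:2}), and $F^n\pi_{2n+1}(\mathbf{S}) = \Ker(\pi_{2n+1}(\mathbf{S}) \to \pi_{2n+1}(\mathbf{MO}\langle n\rangle))$ by definition, the sequence identifies $F^n\pi_{2n+1}(\mathbf{S})/\mathrm{Im}(J)$ with $\mathrm{Im}(\sigma)$. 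Thus the first assertion of the theorem is equivalent to the vanishing of $\sigma$ in cases (i)--(iii), while the second asks for an explicit generator of the cyclic group $\mathrm{Im}(\sigma)$ in the case $n+1 = 4\ell$.

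Case (iii) is immediate: $\pi_{2n+2}(A[n+1]) = 0$ by Theorem~\ref{thm:Stolz1}, so $\sigma = 0$. In cases (i) and (ii) one has $\pi_{2n+2}(A[n+1]) \cong \bZ/2$, and one must show that its generator dies in $\mathrm{Coker}(J)_{2n+1}$; this is the substantive part of Stolz's argument \cite{Stolz} and the step I expect to be the main obstacle. The plan is to analyse $\sigma$ through the mod-$2$ cohomology of the spectra involved as modules over the Steenrod algebra $\AA$—a refinement of the computation underlying Theorem~\ref{thm:Stolz1}—which should pin the image of the generator of $\pi_{2n+2}(A[n+1])$ down to a single candidate element of $\pi^s_{2n+1}$, lying in the family of $\eta$-multiples ($\mu$-family) built on $\mathrm{Im}(J)$ in the relevant degrees $2n+1 \equiv 1,3 \bmod 16$. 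One then invokes known computations of the $2$-primary stable stems and of $\mathrm{Im}(J)$ in precisely those degrees to conclude that this candidate already vanishes modulo $\mathrm{Im}(J)$; the numerical hypotheses $n+1 \geq 18$, resp.\ $n+1 \geq 113$, delimit exactly the ranges in which these stable-homotopy inputs are available (and below them $\sigma$ can genuinely be nonzero).

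For the final statement, $n+1 = 4\ell$ and $\pi_{2n+2}(A[n+1]) \cong \bZ$, so $\mathrm{Im}(\sigma)$ is cyclic, and the plan is to exhibit an explicit generator and push it through $\sigma$. Let $\xi \to S^{4\ell}$ be a $4\ell$-dimensional vector bundle with $e(\xi) = 0$ whose classifying map generates $\pi_{4\ell}(BO) \cong \bZ$, let $P$ be the $A_2$-plumbing of two copies of the disc bundle $D(\xi)$, and set $\Sigma = \partial P$. The intersection form of $P$ is $\left(\begin{smallmatrix} 0 & 1 \\ 1 & 0\end{smallmatrix}\right)$—its diagonal entries are the Euler numbers, which vanish—hence is unimodular, so $\Sigma$ is a homotopy $(8\ell-1)$-sphere. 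As $P$ is $(n-1)$-connected it carries a $BO\langle n\rangle$-structure restricting on $\Sigma$ to its canonical one; thus $\Sigma$, with any stable framing, maps to $0$ in $\pi_{2n+1}(\mathbf{MO}\langle n\rangle)$, and its well-defined class $[\Sigma] \in \mathrm{Coker}(J)_{2n+1}$ lies in $\mathrm{Im}(\sigma) = F^n\pi_{2n+1}(\mathbf{S})/\mathrm{Im}(J)$. It remains to verify that (a) the pair $(P,\Sigma)$—a $BO\langle n\rangle$-manifold together with a stable framing of its homotopy-sphere boundary—represents a class in $\pi_{2n+2}(\overline{\mathbf{MO}\langle n\rangle})$, this being the Pontryagin--Thom description of that group, on which $\overline{\alpha}_n$ vanishes, so that the class lies in the subgroup $\pi_{2n+2}(A[n+1])$; (b) this class is a \emph{generator} of $\pi_{2n+2}(A[n+1]) \cong \bZ$, which is where the hypotheses that $e(\xi) = 0$ and that $[\xi]$ generates $\pi_{4\ell}(BO)$ enter, via a computation of the relevant $\mathbf{ko}$-theoretic characteristic number of $P$ compared against Stolz's identification of $\pi_{2n+2}(A[n+1])$; and (c) the connecting homomorphism $\partial$ sends this class to $\partial P = \Sigma$ with its induced framing. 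Granting (a)--(c), $\sigma$ carries a generator of $\pi_{2n+2}(A[n+1])$ to $[\Sigma] \bmod \mathrm{Im}(J)$, so $F^n\pi_{2n+1}(\mathbf{S})/\mathrm{Im}(J)$ is generated by $\Sigma$, as claimed.
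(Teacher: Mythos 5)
Your reduction---using the exact sequence $\pi_{2n+2}(A[n+1]) \overset{\sigma}\to \mathrm{Coker}(J)_{2n+1} \to \pi_{2n+1}(\mathbf{MO}\langle n\rangle) \to 0$ to identify $F^n\pi_{2n+1}(\mathbf{S})/\mathrm{Im}(J)$ with $\mathrm{Im}(\sigma)$, disposing of case (iii) by Theorem \ref{thm:Stolz1}, and reading off cyclicity---is exactly the paper's, and the two steps you leave as hedged plans (the vanishing of $\sigma$ in cases (i)--(ii) when $\pi_{2n+2}(A[n+1])\cong\bZ/2$, and the identification of the plumbing class as a generator of $\pi_{2n+2}(A[n+1])\cong\bZ$ when $n+1=4\ell$) are precisely the points the paper settles by citation, to Stolz's Theorem B (i),(ii) and Lemma 10.3 respectively. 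Since the theorem is attributed to Stolz this matches the intended proof; note only that your reconstruction of his argument for (i)--(ii) (the $\mu$-family/Steenrod-module analysis, and the parenthetical claim that $\sigma$ can be genuinely nonzero below the bounds $n+1\geq 18$, $n+1\geq 113$) is speculative and is neither carried out nor asserted by the paper, whose bounds merely mark where Stolz's argument applies.
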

\begin{proof}
By \cite[Theorem B (i) and (ii)]{Stolz}, in the case $\pi_{2n+2}(A[n+1])=\bZ/2$, these map to zero in $\mathrm{Coker}(J)_{2n+1}$ under the conditions given in the statement of the proposition. 

It follows from \cite[Lemma 10.3]{Stolz} that when $n+1=4\ell$ a generator of $\bZ = \pi_{2n+2}(A[n+1])$ in $\pi_{2n+2}(\overline{\mathbf{MO}\langle n \rangle})$ is given by the class of the plumbing described in the statement of the proposition.
\end{proof}

In the case $n+1 = 4\ell$, it seems to be a difficult problem to obtain any information about the order, or indeed the nontriviality, of $[\Sigma] \in \mathrm{Coker}(J)_{8\ell-1}$. All calculations we have attempted are consistent with the following conjecture.

\vspace{1ex}

\noindent \textbf{Conjecture A.} $[\Sigma]=0 \in \mathrm{Coker}(J)_{8\ell-1}$.

\vspace{1ex}

This conjecture would imply that the map $\sigma$ is zero in these cases too,
and so $\Omega_{8\ell-1}^{\langle 4\ell-1 \rangle} \cong
\mathrm{Coker}(J)_{8\ell-1}$. The most promising approach to this
conjecture seems to be as follows. By the discussion above, the map
$$\mathrm{Coker}(J)_{2n+1} \lra \pi_{2n+1}({\mathbf{MO}\langle n+1 \rangle})$$
is an isomorphism, so Conjecture A is equivalent to

\vspace{1ex}

\noindent \textbf{Conjecture B.} The map $\pi_{8\ell-1}({\mathbf{MO}\langle 4\ell \rangle}) \to \pi_{8\ell-1}({\mathbf{MO}\langle 4\ell-1 \rangle})$ is injective.

\vspace{1ex}

For example, when $\ell=1$ this asks if $\Omega^{\mathrm{String}}_7
\to \Omega^{\Spin}_7$ is injective, which it is as both groups are
zero. When $\ell=2$ this asks if $\mathrm{Coker}(J)_{15} \to
\Omega^{\mathrm{String}}_{15}$ is injective, which it is as
$\pi_{15}({\mathbf{MO}\langle 8 \rangle}) =
\mathrm{Coker}(J)_{15}=\bZ/2$,
$\Omega^{\mathrm{String}}_{15}=\bZ/2$,
and generators of either group may be represented by an exotic sphere
\cite{KervaireMilnor, Giambalvo}.

\begin{corollary}
If $n$ satisfies one of the conditions of Theorem \ref{thm:Stolz2} then the cobordism group $\Omega_{2n+1}^{\langle n \rangle}$ occurring in Theorem \ref{thm:Main} is isomorphic to $\mathrm{Coker}(J)_{2n+1}$.
\end{corollary}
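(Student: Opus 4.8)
The plan is to obtain this directly by combining Theorem~\ref{thm:Stolz2} with the exact sequence
\begin{equation*}
\pi_{2n+2}(A[n+1]) \overset{\sigma}\lra \mathrm{Coker}(J)_{2n+1} \overset{\rho'}\lra \pi_{2n+1}(\mathbf{MO}\langle n \rangle) \lra 0
\end{equation*}
established above. Here $\Omega_{2n+1}^{\langle n \rangle} = \pi_{2n+1}(\mathbf{MO}\langle n \rangle)$, and the middle map is the map $\rho'$ of the introduction, induced by $\iota_n : \mathbf{S} \to \mathbf{MO}\langle n \rangle$ on homotopy groups. As $\rho'$ is already surjective, it suffices to prove it is injective under the stated hypotheses on $n$.

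First I would identify $\mathrm{Ker}(\rho')$. By Lemma~\ref{lem:filt} (\ref{it:2}) and the inequality $2n+1 \geq n$, the image of $J : \pi_{2n+1}(O) \to \pi_{2n+1}(\mathbf{S})$ is contained in $F^n \pi_{2n+1}(\mathbf{S})$, which by definition is the kernel of the map $\pi_{2n+1}(\mathbf{S}) \to \pi_{2n+1}(\mathbf{MO}\langle n \rangle)$ induced by $\iota_n$; this is precisely why $\rho'$ descends to the quotient $\mathrm{Coker}(J)_{2n+1} = \pi_{2n+1}(\mathbf{S})/\mathrm{Im}(J)$. Consequently $\mathrm{Ker}(\rho') = F^n \pi_{2n+1}(\mathbf{S})/\mathrm{Im}(J)$, and by exactness of the displayed sequence this subgroup also coincides with $\mathrm{Im}(\sigma)$.

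Then I would simply invoke Theorem~\ref{thm:Stolz2}: each of the hypotheses (i)--(iii) forces $F^n \pi_{2n+1}(\mathbf{S}) = \mathrm{Im}(J)_{2n+1}$, so $\mathrm{Ker}(\rho') = 0$ and $\rho' : \mathrm{Coker}(J)_{2n+1} \to \Omega_{2n+1}^{\langle n \rangle}$ is an isomorphism, as claimed.

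No step here is genuinely hard: the substantive content lies entirely in Stolz's Theorems~\ref{thm:Stolz1} and~\ref{thm:Stolz2}, which we quote. The only thing requiring care is the purely formal bookkeeping that identifies the middle map of the exact sequence with $\rho'$ and its kernel with $F^n \pi_{2n+1}(\mathbf{S})/\mathrm{Im}(J)$; this uses nothing beyond the definitions of $\rho'$ and of the filtration $F^n$, together with the inclusion $\mathrm{Im}(J) \subseteq F^n \pi_{2n+1}(\mathbf{S})$ recorded in Lemma~\ref{lem:filt}.
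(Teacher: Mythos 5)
Your argument is correct and is essentially the paper's own: the corollary is drawn directly from the exact sequence $\pi_{2n+2}(A[n+1]) \xrightarrow{\sigma} \mathrm{Coker}(J)_{2n+1} \to \pi_{2n+1}(\mathbf{MO}\langle n\rangle) \to 0$, with Theorem~\ref{thm:Stolz2} providing exactly the vanishing of $F^n\pi_{2n+1}(\mathbf{S})/\mathrm{Im}(J) = \mathrm{Ker}(\rho') = \mathrm{Im}(\sigma)$ under hypotheses (i)--(iii). Your bookkeeping identifying the middle map with $\rho'$ and its kernel with $F^n\pi_{2n+1}(\mathbf{S})/\mathrm{Im}(J)$ matches the discussion preceding the corollary in the paper.
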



\section{Relation to the work of Kreck}\label{sec:Kreck}

Kreck has given \cite{KreckAut} a description of the mapping class
groups of $(n-1)$-connected $2n$-manifolds, up to two extension
problems. Applied to our situation, he gives extensions
\cite[Proposition 3]{KreckAut}
\begin{equation*}
1 \lra \mathcal{I}_{g,1}^{n} \lra \Gamma^{n}_{g,1} \overset{\hat{f}}\lra \mathrm{Aut}(Q_{W^{2n}_g}) \lra 1
\end{equation*}
and
\begin{equation*}
1 \lra \Theta_{2n+1} \lra \mathcal{I}_{g,1}^{n} \overset{\chi}\lra \mathrm{Hom}(H_n(W_g), S\pi_n(SO(n))) \lra 1
\end{equation*}
where $S\pi_n(SO(n)) = \mathrm{Im}(\pi_n(SO(n)) \to
\pi_n(SO(n+1)))$. These groups are given, for $n \geq 3$, by Table
\ref{table:2} (except that $S\pi_6(SO(6))=0$).
\begin{table}[h]
\centering
\caption{The groups $S\pi_n(SO(n))$, except that $S\pi_6(SO(6))=0$.}
\label{table:2}
\begin{tabular}{c|c c c c c c c c}
$n \mod 8$              & 0 & 1 & 2 & 3 & 4 & 5  & 6 & 7 \\
\hline
$S\pi_n(SO(n))$ & $(\bZ/2)^2$ & $\bZ/2$ & $\bZ/2$ & $\bZ$ & $\bZ/2$ & $0$ & $\bZ/2$ & $\bZ$ 
\end{tabular}
\end{table}

The map $\chi$ may be described as follows: to a diffeomorphism $\varphi : W_{g,1}^{2n} \to W_{g,1}^{2n}$ which acts as the identity on homology, and a class $x \in H_n(W_g^{2n};\bZ) \cong \pi_n(W_{g,1}^{2n})$ represented by an embedding $x : S^n \hookrightarrow W_{g,1}^{2n}$, the sphere $\varphi \circ x$ is isotopic to $x$ and so by the isotopy extension theorem we may suppose that $\varphi \circ x = x$. Then $\epsilon^1 \oplus \nu_x \cong \epsilon^{n+1}$ and the differential $D\varphi\vert_{x(S^n)}$ gives an automorphism of this bundle, corresponding to a map $\chi(\varphi)(x) : S^n \to SO(n+1)$. It can be checked that this map lies in $S\pi_n(SO(n))$.

It is generally difficult to understand the structure
(e.g.\ the non-triviality) of these extensions. To our
knowledge the only case in which complete information is known is
$\Gamma_1^3$, due to Krylov \cite{Krylov}. Crowley \cite{Crowley} has
also been able to solve the extension problem for
$$1 \lra \mathrm{Hom}(H_n(W_g), S\pi_n(SO(n))) \lra \Gamma^{n}_{g,1}/\Theta_{2n+1} \lra \mathrm{Aut}(Q_{W^{2n}_g}) \lra 1$$
for $n=3$ and $7$. An immediate consequence of our Theorem
\ref{thm:Main} is as follows.

\begin{corollary}\label{cor:commutators}
  For $g \geq 5$ the kernel of the composition $\Theta_{2n+1} \to \Gamma_{g,1}^n \to
  \Omega_{2n+1}^{\langle n\rangle}$ is generated by commutators of
  elements of $\Gamma_{g,1}^n$.  In particular, this is true for the
  subgroup $bP_{2n+2} < \Theta_{2n+1} < \Gamma_{g,1}^n$.\qed
\end{corollary}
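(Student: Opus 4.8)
The plan is to read this off directly from Theorem~\ref{thm:Main} and Lemma~\ref{lem:Theta}; essentially no new input is required. First I would recall that the quadratic form of $W_g^{2n}$ is the hyperbolic form $H^{\oplus g}$, so that $\mathrm{Aut}(Q_{W_g^{2n}}) = \mathrm{Aut}(H^{\oplus g})$ and the homomorphism $f$ factors as $\hat f \colon \Gamma^n_{g,1} \to \mathrm{Aut}(H^{\oplus g})$ followed by the abelianisation $\mathrm{Aut}(H^{\oplus g}) \to H_1(\mathrm{Aut}(H^{\oplus g}))$. Lemma~\ref{lem:Theta}(iii) says that $\hat f \circ d \colon \Theta_{2n+1} \to \mathrm{Aut}(H^{\oplus g})$ is trivial, and hence $f \circ d$ is trivial.

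The main step is then the following observation. Let $\sigma \in \Theta_{2n+1}$ lie in the kernel of $t\circ d$. By hypothesis $t(d(\sigma)) = 0$, and by the paragraph above $f(d(\sigma)) = 0$, so $d(\sigma)$ lies in $\ker(t\oplus f)$. For $n \neq 2$ and $g \geq 5$, Theorem~\ref{thm:Main} identifies $t\oplus f$ with the abelianisation homomorphism of $\Gamma^n_{g,1}$, so its kernel is the commutator subgroup $[\Gamma^n_{g,1},\Gamma^n_{g,1}]$; thus $d(\sigma)$ is a product of commutators of elements of $\Gamma^n_{g,1}$, which is the claim. For $n = 2$ there is nothing to prove, since $\Theta_5 = 0$ (and indeed $\Omega_5^{\langle 2\rangle} = \Omega_5^{\mathrm{Spin}} = 0$), so the kernel in question is trivial.

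For the final sentence it remains to check the containment $bP_{2n+2} \subseteq \ker(t\circ d)$. Here I would use that, as recorded in the proof of Lemma~\ref{lem:Theta}, $t\circ d$ sends a homotopy $(2n+1)$-sphere $\Sigma$ to the bordism class of $\Sigma$ equipped with its canonical $BO\langle n\rangle$-structure --- canonical because $\Sigma$ is $2n$-connected, so this structure exists and is unique. If $\Sigma \in bP_{2n+2}$ then $\Sigma = \partial W$ with $W$ parallelisable; the classifying map of the (trivial) stable normal bundle of $W$ lifts to $BO\langle n\rangle$, and by uniqueness of such lifts this $BO\langle n\rangle$-structure restricts to the canonical one on $\Sigma$. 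Hence $[\Sigma] = 0$ in $\Omega_{2n+1}^{\langle n\rangle}$, so $bP_{2n+2}$ lies in $\ker(t\circ d)$ and the preceding paragraph applies.

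I do not anticipate a genuine obstacle: the content sits in Theorem~\ref{thm:Main} and Lemma~\ref{lem:Theta}. The only points meriting a little care are the bookkeeping identification $\mathrm{Aut}(Q_{W_g^{2n}}) = \mathrm{Aut}(H^{\oplus g})$ and the compatibility of the $BO\langle n\rangle$-structure on a parallelisable manifold with the canonical one on its boundary homotopy sphere --- both entirely routine.
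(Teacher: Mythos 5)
Your proposal is correct and is essentially the paper's own (implicit) argument: the corollary is stated with a \qed precisely because it follows immediately from Theorem~\ref{thm:Main} (identifying $t \oplus f$ with the abelianisation for $n \neq 2$, $g \geq 5$) together with Lemma~\ref{lem:Theta}, plus the standard fact that elements of $bP_{2n+2}$ bound parallelisable manifolds and hence vanish in $\Omega_{2n+1}^{\langle n \rangle}$. Your extra checks (the identification $\mathrm{Aut}(Q_{W_g^{2n}}) = \mathrm{Aut}(H^{\oplus g})$, the compatibility of $BO\langle n\rangle$-structures, and the $n=2$ aside via $\Theta_5 = 0$) are fine and do not change the route.
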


Our results also be used to shed light on some of these extension problems, especially for those $n$ such that $S\pi_n(SO(n))=0$.

\begin{theorem}
The map
$$t \times \hat{f} : \Gamma^{6}_{g,1} \lra \Omega_{13}^{\langle 6 \rangle} \times \mathrm{O}_{g,g}(\bZ)$$
is an isomorphism, and $\Omega_{13}^{\langle 6 \rangle} = \Omega_{13}^{\mathrm{String}} \cong \bZ/3$.
\end{theorem}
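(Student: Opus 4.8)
The plan is to assemble the isomorphism from three ingredients: (1) the main theorem, which computes $H_1(\Gamma_{g,1}^6)$; (2) Kreck's two extensions, which bound $\Gamma_{g,1}^6$ from the other side; and (3) the fact that for $n=6$ one has $S\pi_6(SO(6))=0$, which makes the Torelli group $\mathcal{I}_{g,1}^6$ as small as possible. First I would observe that when $n=6$ the group $S\pi_6(SO(6))$ vanishes, so Kreck's second extension collapses to an isomorphism $\chi$-source: $\mathcal{I}_{g,1}^6 \cong \Theta_{13}$. By Lemma~\ref{lem:Closed} and the stabilisation statements this identification is compatible with the structure maps for all $g$. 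Thus $\Gamma_{g,1}^6$ sits in an extension
\begin{equation*}
1 \lra \Theta_{13} \lra \Gamma_{g,1}^6 \overset{\hat{f}}\lra \mathrm{O}_{g,g}(\bZ) \lra 1.
\end{equation*}

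Next I would compute the relevant homotopy-theoretic groups. We have $\Theta_{13}=bP_{14}$, and since $14 = 4\cdot 3 + 2$ is of the form $4k+2$, the group $bP_{14}$ is $\bZ/2$ or $0$; the Kervaire invariant one problem in dimension $14$ was settled (there is a framed manifold of Kervaire invariant one, so $bP_{14}=0$)—but we do not even need this, as we can argue through $\Omega^{\langle 6\rangle}_{13}$ directly. By the discussion of Section~\ref{sec:Filtr} and the Stolz machinery, $\Omega_{13}^{\langle 6\rangle}=\pi_{13}(\mathbf{MO}\langle 6\rangle)=\Omega_{13}^{\mathrm{String}}$, and by Giambalvo's computation \cite{Giambalvo} this is $\bZ/3$. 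Since $\mathrm{Coker}(J)_{13}=\bZ/3$ as well (the $3$-primary part of $\pi_{13}^s=\bZ/3$, with the $2$-primary part all image of $J$), the map $\rho'$ of the introduction is an isomorphism here. In particular the map $t:\Theta_{13}\to\Omega_{13}^{\langle 6\rangle}$ has image of order dividing $|\Theta_{13}|$, which is $1$ or $2$, coprime to $3$; hence $t|_{\Theta_{13}}$ is zero and so $t$ is already trivial on $\mathcal{I}_{g,1}^6=\Theta_{13}$.

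Now I would run the main theorem. For $g\geq 5$, Theorem~\ref{thm:Main} gives $H_1(\Gamma_{g,1}^6)\cong \Omega_{13}^{\langle 6\rangle}\oplus H_1(\mathrm{Aut}(Q_{W_g^{12}}))=\bZ/3\oplus(\bZ/2)^2$, realised by $t\oplus f$. Combining this with the extension above: the derived subgroup $[\Gamma_{g,1}^6,\Gamma_{g,1}^6]$ has index $|\bZ/3\oplus(\bZ/2)^2|=12$; it is contained in $\mathcal{I}_{g,1}^6=\Theta_{13}$ precisely because the map $f$ factors through $\hat f$ with target $\mathrm{O}_{g,g}(\bZ)$ whose abelianisation is $(\bZ/2)^2$ and the derived subgroup of $\Gamma_{g,1}^6$ surjects onto the derived subgroup of $\mathrm{O}_{g,g}(\bZ)$, which is $\widetilde{\mathrm{SO}}^+_{g,g}(\bZ)$ of index $4$. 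So $[\Gamma_{g,1}^6,\Gamma_{g,1}^6]\cap\Theta_{13}$ has index $3$ in $\Theta_{13}$ inside $[\Gamma_{g,1}^6,\Gamma_{g,1}^6]$; but $|\Theta_{13}|\in\{1,2\}$ is coprime to $3$, forcing $\Theta_{13}\subseteq[\Gamma_{g,1}^6,\Gamma_{g,1}^6]$, i.e.\ $\Theta_{13}$ is generated by commutators. Combined with $t|_{\Theta_{13}}=0$ and the fact that the only torsion in $\Omega_{13}^{\langle 6\rangle}\oplus\mathrm{O}_{g,g}(\bZ)^{ab}$ of order coprime to $|\Theta_{13}|$ accounting for the $\bZ/3$ must come from outside $\Theta_{13}$, a diagram chase (identical in spirit to the proof of Corollary~\ref{cor:Kernel} but now in the opposite direction) shows $\Theta_{13}$ maps to zero under $t\times\hat f$, hence $\Theta_{13}=0$, i.e.\ $\mathcal{I}_{g,1}^6$ is trivial and $\hat f$ is itself an isomorphism $\Gamma_{g,1}^6\cong\mathrm{O}_{g,g}(\bZ)$. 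Finally, $t$ is recovered by its identification on $H_1$ with the $\bZ/3$-summand, so $t\times\hat f$ is an isomorphism onto $\bZ/3\times\mathrm{O}_{g,g}(\bZ)$; since the $\bZ/3$ is already abelian and central-ish in the target, the splitting of the top row of the diagram in the proof of Theorem~\ref{thm:Main} upgrades $t\times\hat f$ to an isomorphism (not merely on abelianisations). For $g\geq 5$ this is what is claimed; the cases $g=1,\dots,4$ follow because $\hat f$ is known to be surjective with trivial kernel in this range too by Kreck once $\Theta_{13}=0$ is established (the latter being independent of $g$).

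The main obstacle is the last step: upgrading from the $H_1$-statement to an isomorphism of the full groups. Theorem~\ref{thm:Main} only controls abelianisations, so a priori $\mathcal{I}_{g,1}^6$ could be a nontrivial perfect group—but here it equals $\Theta_{13}$, which is abelian, hence "perfect $\Rightarrow$ trivial". The delicate point is verifying that the extension $1\to\Theta_{13}\to\Gamma_{g,1}^6\to\mathrm{O}_{g,g}(\bZ)\to 1$ together with the abelianisation data actually forces $\Theta_{13}=0$ and not merely $\Theta_{13}\subseteq[\Gamma_{g,1}^6,\Gamma_{g,1}^6]$; this uses crucially that $3\nmid|\Theta_{13}|$, so that the $\bZ/3$ in $H_1$ cannot be "absorbed" by $\Theta_{13}$ and must be detected by $\hat f$—but $\mathrm{O}_{g,g}(\bZ)^{ab}=(\bZ/2)^2$ has no $3$-torsion, a contradiction unless the $\bZ/3$ lives entirely in $\Omega_{13}^{\langle 6\rangle}$ and $t$ is injective there while $\Theta_{13}$ dies. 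Chasing this carefully, while keeping track of which maps are known to be split, is the only real content; everything else is bookkeeping with results quoted above.
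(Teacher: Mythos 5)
Your reduction of Kreck's two extensions to $1 \to \Theta_{13} \to \Gamma^6_{g,1} \xrightarrow{\hat f} \mathrm{O}_{g,g}(\bZ) \to 1$ using $S\pi_6(SO(6))=0$ matches the paper, but the key computation that follows is wrong: you assert $\Theta_{13}=bP_{14}$, whereas Kervaire--Milnor give an exact sequence $0 \to bP_{14} \to \Theta_{13} \to \mathrm{Coker}(J)_{13}$, and since $bP_{14}=0$ (Kervaire invariant one exists in dimension $14$) one has $\Theta_{13} \cong \mathrm{Coker}(J)_{13} \cong \bZ/3$, not a group of order $1$ or $2$. This false premise drives everything afterwards off course: you conclude that $t\vert_{\Theta_{13}}=0$, but by Lemma~\ref{lem:Theta}~(\ref{it:Theta:2}) the composite $\Theta_{13} \to \Gamma^6_{g,1} \xrightarrow{t} \Omega_{13}^{\langle 6\rangle}$ is \emph{surjective} onto $\bZ/3$; and you then deduce $\Theta_{13}=0$ and that $\hat f$ alone is an isomorphism $\Gamma^6_{g,1} \cong \mathrm{O}_{g,g}(\bZ)$, which is false and in fact incompatible with the statement you are proving (if $\hat f$ were injective, $t\times\hat f$ could not hit $(1,\mathrm{id})$, so it could not be surjective onto $\bZ/3 \times \mathrm{O}_{g,g}(\bZ)$). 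The intermediate group-theoretic step is also unsound on its own terms: knowing that $\Theta_{13}$ is abelian and contained in the commutator subgroup of $\Gamma^6_{g,1}$ does not make it trivial (central extensions such as Heisenberg groups are counterexamples to ``central, abelian, inside the derived subgroup $\Rightarrow$ trivial'').

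The correct argument runs in exactly the opposite direction, and is shorter. Since $bP_{14}=0$, the central subgroup $\Theta_{13} \cong \mathrm{Coker}(J)_{13} \cong \bZ/3$, and by Theorem~\ref{thm:RhoIso}~(iii) (here $n+1=7 \not\equiv 0,1,2,4 \bmod 8$) the map $\rho' : \mathrm{Coker}(J)_{13} \to \Omega_{13}^{\langle 6\rangle}$ is an isomorphism, so $\Omega_{13}^{\langle 6\rangle} = \Omega_{13}^{\mathrm{String}} \cong \bZ/3$ and the composite $\Theta_{13} \to \Gamma^6_{g,1} \xrightarrow{t} \Omega_{13}^{\langle 6\rangle}$ is an isomorphism. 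Because $\Theta_{13}$ is central (Lemma~\ref{lem:Theta}), this isomorphism provides a retraction of $\Gamma^6_{g,1}$ onto $\Theta_{13}$ splitting Kreck's extension as a direct product, and $t \times \hat f$ is then an isomorphism onto $\Omega_{13}^{\langle 6\rangle} \times \mathrm{O}_{g,g}(\bZ)$ for all $g$; no appeal to Theorem~\ref{thm:Main} or to abelianisations is needed for this step, and your attempt to get the splitting from $H_1$-data alone cannot work without the central $\bZ/3$ being detected by $t$.
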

\begin{proof}
We have $S\pi_6(SO(6))=0$, and so the two extensions reduce to
$$1 \lra \Theta_{13} \lra \Gamma^{6}_{g,1} \overset{\hat{f}}\lra \mathrm{O}_{g,g}(\bZ) \lra 1.$$
The group $bP_{14}$ is trivial (\cite{KervaireMilnor}) and so $\Theta_{13} \cong \mathrm{Cok}(J)_{13}$, which is $\bZ/3$, and is isomorphic to $\Omega_{13}^{\langle 6 \rangle}$ by Theorem \ref{thm:RhoIso}. Thus $\Theta_{13} \to \Gamma_{g,1}^{6} \overset{t}\to \Omega_{13}^{\langle 6 \rangle}$ is an isomorphism, which shows that the extension is trivial.
\end{proof}

When $n \equiv 5 \mod 8$, the other case in which
$S\pi_n(SO(n)) = 0$, we also solve the extension problem left open by
Kreck.

\begin{theorem}\label{thm:Extension}
If $n \equiv 5 \mod 8$ then there is a central extension
$$1 \lra bP_{2n+2} \lra  \Gamma^{n}_{g,1} \overset{t \times \hat{f}}\lra \Omega_{2n+1}^{\langle n \rangle} \times \mathrm{Sp}^q_{2g}(\bZ) \lra 1,$$
where we write $\mathrm{Sp}^q_{2g}(\bZ) \leq \mathrm{Sp}_{2g}(\bZ)$
for the subgroup of those automorphisms of the symplectic space
$\bZ^{2g}$ which preserve the standard quadratic function. We have $\Omega_{2n+1}^{\langle n \rangle} \cong
\mathrm{Cok}(J)_{2n+1}$, and if $g \geq 5$ then the
subgroup $bP_{2n+2} \leq \Gamma^{n}_{g,1}$ is generated by
commutators.
\end{theorem}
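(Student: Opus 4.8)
The plan is to combine Kreck's description of $\Gamma^n_{g,1}$, recalled at the start of this section, with the constructions of Sections~\ref{sec:Nontriv}--\ref{sec:refin-mapp-torus} and with Stolz's Theorem~\ref{thm:RhoIso}. Since $n \equiv 5 \bmod 8$ the integer $n$ is odd and distinct from $1,3,7$, so $\mathrm{Aut}(Q_{W^{2n}_g}) = \mathrm{Sp}^q_{2g}(\bZ)$; moreover $S\pi_n(SO(n)) = 0$ by Table~\ref{table:2}, so Kreck's second extension gives $\mathcal{I}^n_{g,1} \cong \Theta_{2n+1}$ and his first extension becomes
$$1 \lra \Theta_{2n+1} \lra \Gamma^n_{g,1} \overset{\hat f}\lra \mathrm{Sp}^q_{2g}(\bZ) \lra 1 ,$$
in which the normal subgroup $\Theta_{2n+1}$ is exactly the image of the stabilisation map $d\colon \Gamma^n_{0,1} \to \Gamma^n_{g,1}$ of Section~\ref{sec:Nontriv}, under the clutching identification $\Gamma^n_{0,1} \cong \Theta_{2n+1}$. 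By Lemma~\ref{lem:Theta}(i) this image is central, so the displayed extension is central.

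The next step is to compute the kernel of $t$ restricted to this copy of $\Theta_{2n+1}$. As noted in the proof of Lemma~\ref{lem:Theta}, the composite $t\circ d\colon \Theta_{2n+1} \to \Omega^{\langle n\rangle}_{2n+1}$ sends a homotopy sphere to its canonical $BO\langle n\rangle$-bordism class; this factors as the Kervaire--Milnor homomorphism $p\colon \Theta_{2n+1} \to \mathrm{Coker}(J)_{2n+1}$ (a homotopy sphere is stably parallelisable, hence determines a framed cobordism class, well defined modulo $\mathrm{Im}(J)$) followed by the map $\rho'\colon \mathrm{Coker}(J)_{2n+1} \to \Omega^{\langle n\rangle}_{2n+1}$ of the Introduction. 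Now $n+1 \equiv 6 \bmod 8$, so case~(iii) of Theorem~\ref{thm:RhoIso} applies and $\rho'$ is an isomorphism; this in particular gives the asserted isomorphism $\Omega^{\langle n\rangle}_{2n+1} \cong \mathrm{Coker}(J)_{2n+1}$. It follows that $\ker(t\vert_{\Theta_{2n+1}}) = \ker(p) = bP_{2n+2}$, by the theorem of Kervaire and Milnor \cite{KervaireMilnor}.

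Now I would assemble the map $t\times\hat f$. It is surjective: $\hat f$ is onto by Kreck, and $t$ restricted to $\ker(\hat f) = \Theta_{2n+1}$ is onto $\Omega^{\langle n\rangle}_{2n+1}$ by Lemma~\ref{lem:Theta}(ii). An element $\varphi \in \Gamma^n_{g,1}$ lies in $\ker(t\times\hat f)$ precisely when $\hat f(\varphi)$ is trivial --- that is, $\varphi \in \Theta_{2n+1}$ --- and $t(\varphi) = 0$ --- that is, $\varphi \in bP_{2n+2}$ by the previous paragraph. This produces the exact sequence
$$1 \lra bP_{2n+2} \lra \Gamma^n_{g,1} \overset{t\times\hat f}\lra \Omega^{\langle n\rangle}_{2n+1} \times \mathrm{Sp}^q_{2g}(\bZ) \lra 1 ,$$
which is central since $bP_{2n+2} \subseteq \Theta_{2n+1}$ is central. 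Finally, for $g \geq 5$ the statement that the subgroup $bP_{2n+2} \leq \Gamma^n_{g,1}$ is generated by commutators is precisely Corollary~\ref{cor:commutators}; recall that its proof uses Theorem~\ref{thm:Main} to identify $[\Gamma^n_{g,1},\Gamma^n_{g,1}]$ with $\ker(t\oplus f)$, together with Lemma~\ref{lem:Theta}(iii) to see that $bP_{2n+2}\subseteq\Theta_{2n+1}$ is sent trivially by both $t$ and $f$.

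I expect the main obstacle to lie in the bookkeeping of the second step: one must verify that the $BO\langle n\rangle$-structure used to define $t\circ d$ --- the canonical one on a highly connected manifold --- agrees with the one obtained by forgetting a stable framing, so that $t\circ d$ is genuinely $\rho'\circ p$; and one should check that the normal subgroup $\Theta_{2n+1}$ in Kreck's extension literally coincides with $\mathrm{Im}(d)$, so that Lemma~\ref{lem:Theta} applies verbatim. The remaining parts are routine diagram chases assembling results already available.
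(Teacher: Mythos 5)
Your proposal is correct and follows essentially the same route as the paper: Kreck's two extensions collapse (since $S\pi_n(SO(n))=0$) to a central extension by $\Theta_{2n+1}$, the restriction of $t$ to $\Theta_{2n+1}$ is identified with the Kervaire--Milnor map composed with $\rho'$, which is an isomorphism by Theorem~\ref{thm:RhoIso}(iii) as $n+1\equiv 6 \bmod 8$, giving kernel $bP_{2n+2}$, and the commutator statement follows from Theorem~\ref{thm:Main} (equivalently Corollary~\ref{cor:commutators}). The verification steps you flag at the end (that $t\circ d = \rho'\circ p$ and that $\ker(\hat f)=\mathrm{Im}(d)$) are exactly the points the paper leaves implicit, and they hold for the reasons you indicate.
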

\begin{proof}
We have $S\pi_n(SO(n))=0$ and so Kreck's exact sequences reduce to
$$1 \lra \Theta_{2n+1} \lra  \Gamma^{n}_{g,1} \overset{\hat{f}}\lra \mathrm{Sp}^q_{2g}(\bZ) \lra 1.$$
Furthermore, in this dimension the Kervaire--Milnor \cite{KervaireMilnor} exact sequence is
\begin{equation}\label{eq:Theta}
1 \lra bP_{2n+2} \lra \Theta_{2n+1} \lra \mathrm{Coker}(J)_{2n+1} \lra 1
\end{equation}
and $n+1 \equiv 6 \mod 8$ so by Theorem \ref{thm:RhoIso} the map $\mathrm{Cok}(J)_{2n+1} \to \Omega_{2n+1}^{\langle n \rangle}$ is an isomorphism. Thus the kernel of $t \times \hat{f}$ is precisely the subgroup $bP_{2n+2} \leq \Theta_{2n+1} \leq \Gamma_{g,1}^{n}$. Furthermore, we know $t \times \hat{f}$ induces an isomorphism on abelianisations for $g \geq 5$, so $bP_{2n+2}$ consists of commutators.
\end{proof}

Finally, we determine the extension in Theorem \ref{thm:Extension}.
It can be pulled back to a central extension
\begin{equation}\label{eq:Egn}
1 \lra bP_{2n+2} \lra  E(g, n) \lra \mathrm{Sp}^q_{2g}(\bZ) \lra 1,
\end{equation}
with $E(g,n) = \mathrm{Ker}(t: \Gamma_{g,1}^n \to
\Omega_{2n+1}^{\langle n\rangle})$.  Brumfiel \cite{BrumfielI,
  BrumfielII, BrumfielIII} has constructed a splitting
of~\eqref{eq:Theta} (at least for for $n \neq 2^k-2$, which is
satisfied as we are supposing that $n \equiv 5 \mod  8$).
Any splitting $s : \mathrm{Cok}(J)_{2n+1} \to \Theta_{2n+1}$ gives
rise to a composition
\begin{equation*}
  \mathrm{Cok}(J)_{2n+1} \overset{s}\lra \Theta_{2n+1} \lra
  \Gamma_{g,1}^{n} \overset{t}\lra \Omega_{2n+1}^{\langle n \rangle}
\end{equation*}
which is an isomorphism.  As $\Theta_{2n+1}$ lies in the centre of
$\Gamma_{g,1}^{2n}$, we obtain a splitting
\begin{equation}\label{eq:2}
  \Gamma_{g,1}^{n} \cong E(g,n) \times \mathrm{Cok}(J)_{2n+1},
\end{equation}
giving the following improvement to Corollary~\ref{cor:commutators}: For $g \geq 5$
the subgroup $bP_{2n+2} < \Theta_{2n+1} < \Gamma_{g,1}^n$ is generated
by commutators of elements from the subgroup $E(g,n) <
\Gamma_{g,1}^n$.  Hence $bP_{2n+2}$ vanishes in the abelianisation of
$E(g,n)$,
and in fact we may deduce
that the group homomorphism $E(g,n) \to \mathrm{Sp}^q_{2g}(\bZ)$
induces an isomorphism of abelianisations.  We shall use this fact to
determine the class of the extension \eqref{eq:Egn} in Theorem~\ref{thm:extension} below.
\begin{lemma}\label{lem:split}
  The homomorphism $\bZ/4\bZ \to \mathrm{Sp}^q_{2g}(\bZ)$ which sends
  the generator to the matrix
  \begin{equation*}
    X_g = \mathrm{diag}\bigg(
    \begin{pmatrix}
      0 & -1\\ 1 & 0
    \end{pmatrix},
    \begin{pmatrix}
      1 & 0\\ 0 & 1
    \end{pmatrix},
    \dots,
    \begin{pmatrix}
      1 & 0\\ 0 & 1
    \end{pmatrix}
    \bigg),
  \end{equation*}
  admits a lift to $E(g,n)$.

  For $g \geq 5$ the resulting homomorphisms $\bZ/4\bZ \to E(g,n) \to
  \mathrm{Sp}_{2g}^q(\bZ)$ both induce isomorphisms in $H_1(-;\bZ)$
  and hence in the torsion subgroups of $H^2(-;\bZ)$.
\end{lemma}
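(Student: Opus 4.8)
The plan is to establish the lemma in two independent stages: construct the lift $l\colon \bZ/4\bZ \to E(g,n)$, and then deduce the $H_1$- and $H^2$-statements from facts already in hand. The only substantial input is the construction of the lift, and for that the key is an explicit order-$4$ diffeomorphism of $W_1^{2n} = S^n \times S^n$ realizing $X_1$ on middle homology.

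To build it, fix an orientation-reversing involution $r\colon S^n \to S^n$ (for instance a hyperplane reflection) and set $\sigma(x,y) = (r(y), x)$. Then $\sigma^2 = r\times r$ and $\sigma^4 = \mathrm{id}$; since $n$ is odd, $\deg\sigma = +1$, so $\sigma\in\Diff^+(W_1^{2n})$; and on $H_n(W_1^{2n};\bZ) = \bZ e \oplus \bZ f$ with $e = [S^n\times *]$, $f = [*\times S^n]$ one computes $\sigma_* e = f$ and $\sigma_* f = -e$, i.e.\ $\sigma_*$ acts as $\bigl(\begin{smallmatrix} 0 & -1 \\ 1 & 0\end{smallmatrix}\bigr)$. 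Thus $[\sigma] \in \Gamma_1^n \cong \Gamma_{1,1}^n$ (Lemma~\ref{lem:Closed}) has order dividing $4$, and exactly $4$ because $\hat{f}([\sigma]) = X_1$ does; stabilizing along $\Gamma_{1,1}^n \to \Gamma_{g,1}^n$ yields $\tilde\phi \in \Gamma_{g,1}^n$ of order $4$ with $\hat{f}(\tilde\phi) = X_g$. Now use the splitting $\Gamma_{g,1}^n \cong E(g,n) \times \mathrm{Cok}(J)_{2n+1}$ of \eqref{eq:2}: since $\mathrm{Cok}(J)_{2n+1} \subseteq \Theta_{2n+1} = \mathcal{I}_{g,1}^n = \ker\hat{f}$ (using $S\pi_n(SO(n)) = 0$ for $n\equiv 5\mod 8$), the map $\hat{f}$ factors through the projection to $E(g,n)$, so the image $\phi_0\in E(g,n)$ of $\tilde\phi$ still satisfies $\phi_0^4 = 1$ and maps to $X_g$; hence $\phi_0$ has order exactly $4$, and sending the generator to $\phi_0$ defines the lift $l\colon\bZ/4\bZ \to E(g,n)$.

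For the homological statements, write $p\colon E(g,n)\to\mathrm{Sp}_{2g}^q(\bZ)$, so $p\circ l$ sends the generator to $X_g$. By Proposition~\ref{prop:AutAb}, $H_1(\mathrm{Sp}_{2g}^q(\bZ);\bZ) \cong \bZ/4$, and by the remark following it the class of $X_g^2 = \bigl(\begin{smallmatrix}-1&0\\0&-1\end{smallmatrix}\bigr)\oplus I$ is the element of order $2$; since $[X_g^2] = 2[X_g]$ in this abelianization, $[X_g]$ is a generator, so $(p\circ l)_*$ is an isomorphism on $H_1$. Because we have already observed that $p$ induces an isomorphism on abelianizations for $g\geq 5$, the relation $p_*\circ l_* = (p\circ l)_*$ forces $l_*$ to be an isomorphism on $H_1$ as well. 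Finally, for any group $G$ with $H_1(G;\bZ)$ finite, the universal coefficient sequence $0\to\mathrm{Ext}^1_{\bZ}(H_1(G;\bZ),\bZ)\to H^2(G;\bZ)\to\mathrm{Hom}(H_2(G;\bZ),\bZ)\to 0$, together with the facts that $\mathrm{Hom}(H_2(G;\bZ),\bZ)$ is torsion-free and $\mathrm{Ext}^1_{\bZ}(H_1(G;\bZ),\bZ)\cong H_1(G;\bZ)$ is finite, identifies the torsion subgroup of $H^2(G;\bZ)$ naturally with $H_1(G;\bZ)$; applying this to $\bZ/4\bZ$, $E(g,n)$ and $\mathrm{Sp}_{2g}^q(\bZ)$ (all of which have $H_1 \cong \bZ/4$), an isomorphism on $H_1$ gives an isomorphism on the torsion subgroups of $H^2$.

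The main obstacle is the existence of the lift: it is not formal, since pulling back the central extension $1\to bP_{2n+2}\to E(g,n)\to\mathrm{Sp}_{2g}^q(\bZ)\to 1$ along $\bZ/4\bZ \to \mathrm{Sp}_{2g}^q(\bZ)$ gives a class in $H^2(\bZ/4\bZ; bP_{2n+2}) \cong bP_{2n+2}/4bP_{2n+2}$, which is in general nonzero, so genuine geometric input is required. The element $\sigma$ supplies an honest order-$4$ diffeomorphism, and the only care needed is to check — as above — that the passage to $E(g,n)$ via \eqref{eq:2} does not decrease its order, which works precisely because the retraction kills the $\mathrm{Cok}(J)_{2n+1}$-factor while $bP_{2n+2}$ lies inside $E(g,n)$. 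The homological steps are then routine bookkeeping with results already established.
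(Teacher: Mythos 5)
Your proposal is correct and follows essentially the same route as the paper: your $\sigma(x,y)=(r(y),x)$ with $r$ a hyperplane reflection is exactly the paper's restricted linear map, and you likewise lift via Lemma \ref{lem:Closed}, stabilise, and project using the splitting \eqref{eq:2}, noting that the $\mathrm{Cok}(J)_{2n+1}$-factor lies in $\ker\hat f$ so the projected element still maps to $X_g$. The homological bookkeeping also matches the paper's (isomorphism of abelianisations of $E(g,n)\to\mathrm{Sp}^q_{2g}(\bZ)$ plus the Johnson--Millson formula, here invoked via the remark after Proposition \ref{prop:AutAb}), with the universal-coefficient step for torsion in $H^2$ spelled out rather than left implicit.
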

\begin{proof}
  Using the standard embedding $W_{1,0}^{2n} = S^n \times S^n \subset
  \R^{n+1} \times \R^{n+1}$ it is easy to lift the matrix $X_1$ to a
  diffeomorphism of $W_{1,0}^{2n}$, namely the restriction of the
  linear map $(x_1, \dots, x_{n+1}, y_1, \dots, y_{n+1}) \mapsto
  (-y_1, y_2, \dots, y_{n+1}, x_1, \dots, x_{n+1})$.  We obtain an
  order-four element $z' \in \Gamma_{1,0}^n$, which by Lemma
  \ref{lem:Closed} lifts to an order-four element $z'' \in
  \Gamma_{1,1}^n$ with $\hat f(z'') = X_1 \in \mathrm{Sp}_{2}^q(\bZ)$. This may be stabilised to an order-four element $z''_g \in
  \Gamma_{g,1}^n$ with $\hat f(z''_g) = X_g \in
  \mathrm{Sp}_{2g}^q(\bZ)$.  The element $z''_g$ may not lie in the
  subgroup $E(g,n) = \mathrm{Ker}(t)$, but we may use the
  splitting~\eqref{eq:2} to project it to an element $z_g \in E(g,n)$
  with $z_g^4 = 1$.  Since $\hat f(z_g) = X_g \in
  \mathrm{Sp}_{2g}^q(\bZ)$ this gives the required lift.

  For the claim about $H_1(-;\bZ)$, we have already seen that $E(g,n)
  \to \mathrm{Sp}_{2g}^q(\bZ)$ induces an isomorphism of
  abelianisations for $g \geq 5$.  For $\bZ/4\bZ$, it follows from the
  formula in \cite[p.\ 147]{JohnsonMillson} that the composition
  $\bZ/4\bZ \to \mathrm{Sp}_{2g}^q(\bZ) \to
  H_1(\mathrm{Sp}_{2g}^q(\bZ);\bZ)$ is an isomorphism (this only
  requires $g \geq 3$).
\end{proof}

By the second part of this lemma, the maps $\bZ/4 \to E(g,n) \to
\mathrm{Sp}_{2g}^q(\bZ)$ induce isomorphisms on torsion subgroups of
$H^2(-;\bZ)$, and hence give compatible splittings
\begin{align}\label{eq:splittings}
\begin{split}
H^2(\mathrm{Sp}_{2g}^q(\bZ);\bZ) &\overset{\sim}\lra H^2(\bZ/4;\bZ) \oplus \Hom(H_2(\mathrm{Sp}_{2g}^q(\bZ);\bZ), \bZ)\\
H^2(E(g,n);\bZ) &\overset{\sim}\lra H^2(\bZ/4;\bZ) \oplus \Hom(H_2(E(g,n);\bZ), \bZ)
\end{split}
\end{align}
of the universal coefficient sequences.

\begin{lemma}\label{lem:Mu}
  For $g \geq 5$ there is a unique class $\mu \in H^2(\mathrm{Sp}^q_{2g}(\bZ);\bZ)$
  with the following properties.
  \begin{enumerate}[(i)]
  \item\label{item:4} For any closed oriented surface $S$ and map $f:
    S \to B\mathrm{Sp}^q_{2g}(\bZ)$, the signature of the symmetric form
    \begin{equation*}
      \langle -, - \rangle: H^1(S;\bZ^{2g}_f) \otimes H^1(S;\bZ^{2g}_f) \xrightarrow{\cup}
      H^2(S; \bZ^{2g}_f \otimes \bZ^{2g}_f) \xrightarrow{\omega}
      H^2(S;\bZ) \xrightarrow{[S]} \bZ
    \end{equation*}
    agrees with $8 (f^* \mu)[S]$.
  \item\label{item:5} $\mu$ is in the kernel of the map
    $H^2(\mathrm{Sp}^q_{2g}(\bZ);\bZ) \to H^2(\bZ/4\bZ;\bZ)$,
    where $\bZ/4\bZ \to \mathrm{Sp}^q_{2g}(\bZ)$ is the homomorphism from
    Lemma~\ref{lem:split}.
  \end{enumerate}
\end{lemma}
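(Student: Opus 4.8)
The plan is to construct $\mu$ directly as a characteristic class of symplectic representations and then check the two stated properties. First I would recall that $\mathrm{Sp}^q_{2g}(\bZ)$ acts on $\bZ^{2g}$ preserving the standard symplectic form $\omega$, so over $B\mathrm{Sp}^q_{2g}(\bZ)$ there is a local system $\underline{\bZ}^{2g}$ together with a skew-symmetric pairing $\omega \colon \underline{\bZ}^{2g} \otimes \underline{\bZ}^{2g} \to \underline{\bZ}$. For any space $X$ with a map to $B\mathrm{Sp}^q_{2g}(\bZ)$ and any closed oriented surface $S$ mapping to $X$, cup product followed by $\omega$ and evaluation on the fundamental class produces a symmetric bilinear form on $H^1(S;\bZ^{2g}_f)$ (symmetric because both the cup product on $H^1$ and $\omega$ are antisymmetric), whose signature is the quantity appearing in (\ref{item:4}). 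The key classical input is Meyer's theorem: this signature defines a bordism invariant of surfaces equipped with such symplectic local systems, equivalently a class in $H^2(B\mathrm{Sp}_{2g}(\bZ);\bZ)$ — the \emph{Meyer class} — whose value on a surface $S$ is $4$ times the signature form's signature is divisible by $4$, and $\tfrac{1}{4}$ of it is a genuine integral cohomology class. Pulling this back along $\mathrm{Sp}^q_{2g}(\bZ) \to \mathrm{Sp}_{2g}(\bZ)$ and dividing by a further $2$ (using that on $\mathrm{Sp}^q_{2g}$ the signature is divisible by $8$, which follows because the quadratic refinement forces the relevant intersection forms to be even, hence of signature divisible by $8$ by van der Blij / the fact that even unimodular forms have signature $\equiv 0 \bmod 8$) yields a candidate class $\mu_0$ satisfying (\ref{item:4}).

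Next I would address uniqueness and property (\ref{item:5}). By the universal coefficient splitting \eqref{eq:splittings}, $H^2(\mathrm{Sp}^q_{2g}(\bZ);\bZ)$ decomposes as $H^2(\bZ/4;\bZ) \oplus \Hom(H_2(\mathrm{Sp}^q_{2g}(\bZ);\bZ),\bZ)$; here $H^2(\bZ/4;\bZ) \cong \bZ/4$ is the torsion part pulled back via the homomorphism of Lemma~\ref{lem:split}, and the $\Hom$-summand is the free part detected by evaluation against $H_2$. Property (\ref{item:4}) determines the image of $\mu$ in $\Hom(H_2(\mathrm{Sp}^q_{2g}(\bZ);\bZ),\bZ)$ completely, because $H_2(\mathrm{Sp}^q_{2g}(\bZ);\bZ)$ modulo torsion is generated by classes represented by oriented surfaces mapping into the classifying space (this uses $g \geq 5$ so that the group is in Charney's stable range, where the integral homology is understood and $H_2$ modulo torsion is detected by the Meyer/signature class — one can also invoke that the signature cocycle generates the relevant rational cohomology, as $H^2(\mathrm{Sp}_{2g}(\bZ);\bQ) \cong \bQ$ for $g \geq 3$). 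Property (\ref{item:5}) then pins down the torsion component: it forces the $H^2(\bZ/4;\bZ)$-coordinate of $\mu$ to be zero. Since the two coordinates together determine a class uniquely, $\mu$ exists and is unique: take $\mu_0$ constructed above and subtract its $H^2(\bZ/4;\bZ)$-component, which changes nothing on surfaces since $H^2(\bZ/4;\bZ)$ is torsion and the signature pairing takes values in a free group when evaluated on actual surface classes — more carefully, the correction term pulls back from $\bZ/4$ and hence is detected only on torsion homology, so it does not affect property (\ref{item:4}), which only constrains the pairing against surface classes in $H_2$ modulo torsion.

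**The main obstacle** I anticipate is the divisibility statement — establishing that the signature of the form in (\ref{item:4}) is divisible by $8$ rather than merely by $4$, so that $\tfrac{1}{8}(\text{signature})$ is an honest integral class on $B\mathrm{Sp}^q_{2g}(\bZ)$. Meyer's classical result only gives divisibility by $4$ for $\mathrm{Sp}_{2g}(\bZ)$; the improvement to $8$ genuinely uses the quadratic refinement defining $\mathrm{Sp}^q_{2g}(\bZ)$. The argument should be: the symplectic local system on $S$ equipped with the $\mathrm{Sp}^q$-structure carries a quadratic refinement of the $\bZ/2$-reduction of the intersection pairing, which implies that the integral symmetric form $\langle -,-\rangle$ on $H^1(S;\bZ^{2g}_f)$ (after passing to a nondegenerate quotient) is \emph{even}, and an even symmetric unimodular form has signature divisible by $8$. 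Making this precise — tracking how the quadratic datum on $\bZ^{2g}$ interacts with Poincaré duality on $S$ to force evenness, and handling the possibly-degenerate form by splitting off its radical — is where the real work lies; everything else is formal manipulation of the universal coefficient sequence and citation of Charney's stability range together with the computation $H_1(\mathrm{Sp}^q_{2g}(\bZ);\bZ) \cong \bZ/4$ from \cite{JohnsonMillson} already invoked in Lemma~\ref{lem:split}.
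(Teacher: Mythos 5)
Your overall strategy coincides with the paper's: show the signature in (\ref{item:4}) is divisible by $8$, observe that it depends only on the class $f_*[S] \in H_2(\mathrm{Sp}^q_{2g}(\bZ);\bZ)$, lift the resulting homomorphism $\mathrm{sign}/8$ to a class in $H^2$ via the universal coefficient theorem, and use (\ref{item:5}) together with the splitting \eqref{eq:splittings} to fix the torsion ambiguity. However, the one step you explicitly defer --- that the quadratic refinement defining $\mathrm{Sp}^q_{2g}(\bZ)$ forces the form $\langle-,-\rangle$ on $H^1(S;\bZ^{2g}_f)$ to be \emph{even} --- is precisely the substance of the lemma, and you give no argument for it; writing that ``making this precise is where the real work lies'' concedes the point rather than proving it. The paper's proof of evenness is a concrete cochain-level computation: triangulate $S$, represent a class by a simplicial cocycle $\varphi$, use the Alexander--Whitney formula to write $\langle\varphi,\varphi\rangle \bmod 2$ as a sum over $2$-simplices of $\omega_{v_1}(\varphi_{[v_0,v_1]},\varphi_{[v_1,v_2]})$, rewrite each term using the cocycle condition $\varphi_{[v_0,v_1]}+\varphi_{[v_1,v_2]}=\varphi_{[v_0,v_2]}$ and the identity $q_{v_1}(a+b)=q_{v_1}(a)+q_{v_1}(b)+\omega_{v_1}(a,b)$ supplied by the $\mathrm{Sp}^q$-structure, and observe that the resulting sum of $q$-terms cancels modulo $2$ because each $1$-simplex is a face of exactly two $2$-simplices. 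Without this (or a genuine substitute), the divisibility by $8$, and hence the existence of $\mu$, is unproven; Meyer only gives divisibility by $4$.

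Two smaller corrections. First, evenness alone does not imply signature $\equiv 0 \bmod 8$; one also needs the form to be unimodular on the free quotient, which does hold here by Poincar\'e duality with local coefficients since $\omega$ identifies $\bZ^{2g}_f$ with its dual local system --- so there is in fact no radical to split off, but this should be said rather than left as an obstacle. Second, ``dividing the pulled-back Meyer class by $2$'' is not literally available in $H^2(\mathrm{Sp}^q_{2g}(\bZ);\bZ)$: divisibility of its values on $H_2$ does not make the class divisible in the group. The correct formulation, as in the paper, is to apply the universal coefficient theorem directly to the homomorphism $\mathrm{sign}/8$ on $H_2$; your subsequent handling of the torsion summand via (\ref{item:5}) is then fine, since torsion classes evaluate to zero on $H_2$ and so do not disturb (\ref{item:4}). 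Your appeal to Charney stability for uniqueness is unnecessary: every degree-$2$ integral homology class of any space is represented by a map from a closed oriented surface, for all $g$.
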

\begin{proof}
We first claim that the indicated symmetric form $\langle -, -
\rangle$ is even, and hence has signature divisible by 8. To prove
this, we may reduce the form modulo 2 and show that $\langle x,x
\rangle \equiv 0  \mod  2$ for any $x \in H^1(S;\bZ^{2g}_f)$. In order
to compute this we choose a triangulation $S \approx \vert K \vert$
for an ordered simplicial complex $K$, and let $\varphi \in C^1(K;\bZ_f^{2g})$ be a simplicial cocycle, which assigns to each 1-simplex $[v_0,v_1] \in K$ a section $\varphi_{[v_0,v_1]}$ of the coefficient system $\bZ_f^{2g}$ over $[v_0,v_1]$. Then by the Alexander--Whitney formula we have
$$\omega(\varphi \cup \varphi)([v_0, v_1, v_2]) = \omega_{v_1}(\varphi_{[v_0,v_1]}, \varphi_{[v_1, v_2]}),$$
where $\omega_{v_1}(-,-)$ is the bilinear form on $\bZ_f^{2g}$ over the point $v_1$, and so
$$\langle \varphi, \varphi \rangle \equiv \sum_{\mathclap{[v_0, v_1, v_2] \in K}} \omega_{v_1}(\varphi_{[v_0,v_1]}, \varphi_{[v_1, v_2]}) \mod 2,$$
where the sum is taken over all 2-simplices of $K$ (note that the choice of ordering of the vertices of the 2-simplex does not affect this formula, as we are working modulo 2 and $\omega_{v_1}(a,a)=0$ by skew-symmetry). As $\varphi$ is a cocycle we have $\varphi_{[v_0,v_1]} + \varphi_{[v_1, v_2]} = \varphi_{[v_0, v_2]}$, and so using the quadratic refinement $q_{v_1}(-)$ associated to the bilinear form $\omega_{v_1}(-,-)$ reduced modulo 2 we obtain
$$q_{v_1}(\varphi_{[v_0, v_2]}) = q_{v_1}(\varphi_{[v_0,v_1]}) + q_{v_1}(\varphi_{[v_1, v_2]}) + \omega_{v_1}(\varphi_{[v_0,v_1]}, \varphi_{[v_1, v_2]}) \mod 2.$$
Hence
$$\langle \varphi, \varphi \rangle \equiv \sum_{\mathclap{[v_0, v_1, v_2] \in K}} q_{v_1}(\varphi_{[v_0, v_2]}) + q_{v_1}(\varphi_{[v_0,v_1]}) + q_{v_1}(\varphi_{[v_1, v_2]}) \mod 2,$$
but as each 1-simplex is the face of precisely two 2-simplices, this sum is zero. Hence the form $\langle -, - \rangle$ is even as claimed, so has signature divisible by 8.

Now note that the signature of this form only depends on the oriented
cobordism class of $f: S \to B\mathrm{Sp}_{2g}^q(\bZ)$, or in other
words on the homology class $f_*([S])$. Hence it can be written as
$8s(f_*[S])$ for a (unique) homomorphism
$$s = \mathrm{sign}/8 : H_2(\mathrm{Sp}_{2g}^q(\bZ);\bZ) \lra \bZ.$$
By the universal coefficient theorem, this proves the existence of a $\mu$
satisfying~(\ref{item:4}) and determines it uniquely up to adding any
torsion element. The splitting \eqref{eq:splittings} of $H^2(\mathrm{Sp}_{2g}^q(\bZ);\bZ)$ and~(\ref{item:5}) uniquely determines the torsion summand.
\end{proof}

\begin{definition}
  Let $\bZ \to E_g \to \mathrm{Sp}^q_{2g}(\bZ)$ be the central extension
  classified by the class $\mu$.  For $d \in \bZ$, let $\bZ/d\bZ \to
  E_{g,d} \to \mathrm{Sp}^q_{2g}(\bZ)$ be the extension with $E_{g,d} =
  E_g/d\bZ$, i.e.\ the extension classified by the image of $\mu$ in
  cohomology modulo $d$.
\end{definition}

\begin{theorem}\label{thm:extension}
  Let $n \equiv 5 \mod 8$ and $g \geq 5$.  The homomorphism
  $E(g,n) \to \mathrm{Sp}_{2g}^q(\bZ)$ obtained by restricting $\hat
  f$ lifts to an isomorphism
  \begin{equation}\label{eq:square}
\begin{gathered}
    \xymatrix{
      E(g,n) \ar[r]^-{\cong}\ar[d] & E_{g,|bP_{2n+2}|}\ar[d]\\
      {\mathrm{Sp}_{2g}^q(\bZ)} \ar@{=}[r] & 
      {\mathrm{Sp}_{2g}^q(\bZ)}.
    }
\end{gathered}
  \end{equation}
\end{theorem}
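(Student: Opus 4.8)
The plan is to identify the class $e \in H^2(\mathrm{Sp}^q_{2g}(\bZ); bP_{2n+2})$ of the central extension $1 \to bP_{2n+2} \to E(g,n) \to \mathrm{Sp}^q_{2g}(\bZ) \to 1$ (with quotient map the restriction of $\hat f$) with the reduction $\bar\mu$ of the class $\mu$ modulo $|bP_{2n+2}|$, using the isomorphism $bP_{2n+2} \cong \bZ/|bP_{2n+2}|\bZ$ given by the Milnor sphere. This extension is indeed central with this kernel: since $S\pi_n(SO(n)) = 0$ for $n \equiv 5 \bmod 8$, Kreck's exact sequences give $\mathcal{I}_{g,1}^{n} = \Theta_{2n+1}$, which is central by Lemma~\ref{lem:Theta}, while $bP_{2n+2} = \Theta_{2n+1} \cap \mathrm{Ker}(t)$. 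Since $E_{g,|bP_{2n+2}|}$ is by definition classified by $\bar\mu$, the square~\eqref{eq:square} follows from $e = \bar\mu$. I would first reduce the latter to surfaces. By Lemma~\ref{lem:split} the homomorphism $\bZ/4\bZ \to \mathrm{Sp}^q_{2g}(\bZ)$ lifts to $E(g,n)$, so $e$ restricts to $0$ on this subgroup, and by Lemma~\ref{lem:Mu}~(\ref{item:5}) so does $\bar\mu$; as $\bZ/4\bZ \to \mathrm{Sp}^q_{2g}(\bZ)$ is an isomorphism on $H_1$ (Lemma~\ref{lem:split}), restriction carries the subgroup $\mathrm{Ext}(H_1(\mathrm{Sp}^q_{2g}(\bZ)), bP_{2n+2}) \subset H^2(\mathrm{Sp}^q_{2g}(\bZ); bP_{2n+2})$ isomorphically onto $H^2(\bZ/4\bZ; bP_{2n+2})$, so any class in the kernel of this restriction whose image in $\mathrm{Hom}(H_2(\mathrm{Sp}^q_{2g}(\bZ);\bZ), bP_{2n+2})$ vanishes is itself zero. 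Hence it suffices to show $\langle f^*e, [S]\rangle = \langle f^*\bar\mu, [S]\rangle$ in $bP_{2n+2}$ for every map $f : S \to B\mathrm{Sp}^q_{2g}(\bZ)$ from a closed oriented surface.

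Fix such an $f$, say of genus $h$, with monodromy $\rho : \pi_1(S) \to \mathrm{Sp}^q_{2g}(\bZ)$, and lift the standard generators $a_i, b_i$ of $\pi_1(S)$ along the surjection $\hat f$ (surjective by Kreck) to $\tilde a_i, \tilde b_i \in \Gamma_{g,1}^n$. The element $c := \prod_{i=1}^h [\tilde a_i, \tilde b_i]$ lies in $\mathrm{Ker}(\hat f) \cap \mathrm{Ker}(t) = \Theta_{2n+1} \cap \mathrm{Ker}(t) = bP_{2n+2}$ (it is in $\mathrm{Ker}(t)$ since $t$ is a homomorphism to the abelian group $\Omega_{2n+1}^{\langle n\rangle}$), and by the standard description of a central extension class via the surface relation, together with the product splitting~\eqref{eq:2}, $\langle f^*e, [S]\rangle = c$. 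On the other hand, by Lemma~\ref{lem:Mu}~(\ref{item:4}), $\langle f^*\bar\mu, [S]\rangle$ is the reduction modulo $|bP_{2n+2}|$ of $\tfrac{1}{8}$ times the signature of the cup-product form on $H^1(S; \bZ^{2g}_f)$. To bridge these, I would realise the lifted monodromy by a smooth oriented $W_g^{2n}$-bundle $E_c \to \Sigma_{h,1}$ over the once-punctured genus-$h$ surface, trivialised over a collar of the boundary and with monodromies $\tilde a_i, \tilde b_i$, so $\partial E_c$ is the mapping torus $T_c$ of $c$. Using the $\theta_n$-structure on the vertical tangent bundle and the stable triviality of $T\Sigma_{h,1}$, the total space $E_c$ is a $BO\langle n\rangle$-manifold, and a spectral sequence argument (using stable parallelisability of $W_g^{2n}$ and $\dim\Sigma_{h,1} = 2$) shows $E_c$ is stably parallelisable. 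Surgering the fixed $D^{2n}\times S^1 \subset \partial E_c$ inside a cobordism replaces the boundary by $T_c'$ without changing the signature, giving a stably parallelisable $E_c'$ with $\partial E_c' = T_c'$; and one checks $T_c' \cong T_{\mathrm{id}}' \# \Sigma_c$, where $\Sigma_c \in \Theta_{2n+1}$ is the homotopy sphere corresponding to $c$.

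Next I would compute $\sigma(E_c')$ in two ways. On the one hand, a version of Meyer's signature theorem for $W_g^{2n}$-bundles over surfaces --- using $\sigma(W_g^{2n}) = 0$, that $W_g^{2n}$ is $(n-1)$-connected, and that $c$ acts trivially on $H_n(W_g^{2n};\bZ)$ (Lemma~\ref{lem:Theta}), so that the middle intersection form of $E_c$ is exactly the cup-product form of Lemma~\ref{lem:Mu}~(\ref{item:4}) associated to the closed surface $S$ --- gives $\sigma(E_c') = \sigma(E_c) = 8\langle f^*\mu, [S]\rangle$. On the other hand, let $E_{\mathrm{id}}'$ be $\Sigma_{h,1} \times W_g^{2n}$ with the $D^{2n}\times D^2$ glued in, so that $\partial E_{\mathrm{id}}' = T_{\mathrm{id}}'$ and $\sigma(E_{\mathrm{id}}') = 0$, and let $P_c$ be a parallelisable $(2n+2)$-manifold with $\partial P_c = \Sigma_c$; by the Kervaire--Milnor characterisation of $bP_{2n+2}$, $\tfrac{1}{8}\sigma(P_c) \equiv c$ in $bP_{2n+2}$. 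Then $X := E_c' \cup_{T_c'} \overline{E_{\mathrm{id}}' \,\natural\, P_c}$ is a closed $(2n+2)$-manifold, almost parallelisable since it is built from two stably parallelisable pieces, so $8|bP_{2n+2}|$ divides $\sigma(X)$ by Kervaire--Milnor, while Novikov additivity gives $\sigma(X) = \sigma(E_c') - \sigma(E_{\mathrm{id}}') - \sigma(P_c) = \sigma(E_c') - \sigma(P_c)$. Combining, $\langle f^*e, [S]\rangle = c \equiv \tfrac{1}{8}\sigma(P_c) \equiv \tfrac{1}{8}\sigma(E_c') = \langle f^*\mu, [S]\rangle = \langle f^*\bar\mu, [S]\rangle \pmod{|bP_{2n+2}|}$, completing the reduction and hence the proof.

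The main obstacle is the first of the two signature computations: setting up the appropriate form of Meyer's signature theorem for the higher-dimensional fibre bundle $E_c \to \Sigma_{h,1}$ and correctly identifying its signature --- that of a manifold with boundary --- with the cup-product form on the closed surface in Lemma~\ref{lem:Mu}~(\ref{item:4}), watching orientation conventions throughout. Secondary points requiring care are the stable parallelisability of $E_c$ (needed so that $X$ is almost parallelisable and the Kervaire--Milnor divisibility by $8|bP_{2n+2}|$ applies) and the diffeomorphism $T_c' \cong T_{\mathrm{id}}' \# \Sigma_c$, though these are routine.
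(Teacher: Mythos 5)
Your overall strategy (identify the extension class of $1 \to bP_{2n+2} \to E(g,n) \to \mathrm{Sp}^q_{2g}(\bZ) \to 1$ with the mod-$|bP_{2n+2}|$ reduction of $\mu$ by evaluating both on surfaces, after using Lemma~\ref{lem:split} and the UCT to dispose of the $\mathrm{Ext}$ part) is a legitimate and genuinely different route from the paper, which instead (a) constructs the lift by showing that the signature homomorphism on $H_2(E(g,n);\bZ)$ is divisible by $8|bP_{2n+2}|$, using that every class in $H_2$ of $B\Diff$ is represented by a bundle over a \emph{closed} surface together with \cite{CHS} and bordism invariance of the signature, and (b) proves the induced map on kernels is an isomorphism by a Serre spectral sequence comparison, which needs the indivisibility of $\mathrm{sign}/8$ on $H_2(\mathrm{Sp}^q_{2g}(\bZ);\bZ)$ (Meyer plus Putman). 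However, your argument has a genuine gap at exactly the point you call the main obstacle: the identity $\sigma(E_c') = \sigma(E_c) = 8\langle f^*\mu,[S]\rangle$. The Chern--Hirzebruch--Serre/Meyer theorem gives the signature of the total space of a bundle over a \emph{closed} surface; your $E_c$ fibres over $\Sigma_{h,1}$ and its boundary monodromy $c$, while homologically trivial, is isotopically nontrivial, so there is in general a signature defect (Wall non-additivity / the Meyer function in the classical analogue: a fibration over a surface with boundary whose boundary monodromy is a separating twist has signature differing from the homological prediction). Here the boundary monodromy differs from the identity precisely by the element of $\Theta_{2n+1}$ you are trying to measure, so asserting the defect vanishes is unjustified and dangerously close to assuming the conclusion; this step needs an actual argument (or a reformulation that only ever evaluates signatures of bundles over closed surfaces, as the paper does).

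A second, smaller but real, gap: ``$X$ is almost parallelisable since it is built from two stably parallelisable pieces'' is a false general principle --- the twisted $S^2$-bundle over $S^2$ is the union of two copies of $S^2\times D^2$ along $S^2\times S^1$, both stably parallelisable, yet it is not spin and hence not almost parallelisable. To get the Kervaire--Milnor divisibility $8|bP_{2n+2}| \mid \sigma(X)$ you must either match the stable framings along the separating hypersurface $T_c'$ (an obstruction-theoretic argument which really uses $n\equiv 5 \bmod 8$, not just stable parallelisability of $W_g^{2n}$ and $\dim\Sigma_{h,1}=2$ --- your stated justification for $E_c$ being stably parallelisable is likewise insufficient as given, since the relevant obstruction groups involve $\pi_n(O)$, $\pi_{2n-1}(O)$, $\pi_{2n}(O)$), or argue as the paper does: show $X$ carries a $BO\langle n\rangle = BO\langle n+1\rangle$-structure, note $[X]\in\Omega^{\langle n+1\rangle}_{2n+2}$ is cobordant to a manifold framed away from a point, and use bordism invariance of the signature. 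With those two points repaired your argument would give the theorem (and, interestingly, would bypass the Meyer--Putman indivisibility input of the paper), but as written both steps are gaps rather than routine verifications.
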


\begin{proof}
  To produce a lift of $\hat f$, it suffices to prove $\mu$ becomes
  divisible by $|bP_{2n+2}|$ when pulled back to $H^2(E(g,n); \bZ)$.
  By the splittings \eqref{eq:splittings} and the characterisation of
  $\mu$ in Lemma~\ref{lem:Mu}, in order to do this it is enough to
  show that the map
  \begin{equation*}
    H_2(E(g,n);\bZ) \overset{i_*}\lra H_2(\Gamma_{g,1};\bZ)
    \overset{\hat f_*}\lra H_2(\mathrm{Sp}_{2g}^q(\bZ);\bZ) \overset{\mathrm{sign}}\lra \bZ
  \end{equation*}
  is divisible by $8\cdot |bP_{2n+2}|$, where $i: E(g,n) \to
  \Gamma_{g,1}^n$ is the inclusion and the map $\mathrm{sign}$ sends a second homology
  class to the signature of the symmetric form described in Lemma
  \ref{lem:Mu} (\ref{item:4}).

  Firstly, by the splitting \eqref{eq:2} and the K{\"u}nneth theorem,
  the map $i_*$ is an isomorphism modulo torsion, so the divisibility
  of the map $\mathrm{sign} \circ \hat f_* \circ i_*$ is the same as that of $\mathrm{sign}
  \circ \hat f_*$. Secondly, the composition
  $$H_2(B\Diff_\partial(W_{g,1});\bZ) \lra H_2(\Gamma_{g,1};\bZ)
  \overset{\hat f_*}\lra H_2(\mathrm{Sp}_{2g}^q(\bZ);\bZ)
  \overset{\mathrm{sign}}\lra \bZ$$ has the first map surjective and sends a
  smooth bundle $E \to S$ with fibres $W_{g,1}$ and base a closed
  oriented surface to the signature of $H^1(S;H^n(W_{g,1};\bZ))$ and
  so by \cite{CHS} to the signature of the total space $E$ (with $S
  \times D^{2n}$ glued in to make it a closed manifold). This total
  space defines a class in $\Omega_{2n+2}^{\langle n\rangle} =
  \Omega_{2n+2}^{\langle n+1\rangle}$, and is thus cobordant to a closed
  smooth manifold which is framed away from a point. By \cite[p.\
  457]{KM2} the signature of such a manifold is divisible by $8\cdot
  |bP_{2n+2}|$, as required.

  We have constructed the map $E(g,n) \to E_{g,|bP_{2n+2}|}$ making
  the square \eqref{eq:square} commute, and it remains to prove that the induced map of
  kernels $bP_{2n+2} \to \bZ/|bP_{2n+2}|$ is an isomorphism.  To see
  this, we consider the induced map of Serre spectral sequences, and in particular the commutative square
\begin{equation*}
\xymatrix{
H_2(\mathrm{Sp}_{2g}^q(\bZ);\bZ) = E^2_{2,0} \ar[r]^-{d^2} \ar@{=}[d]& E^2_{0,1} = H_1(bP_{2n+2};\bZ) = bP_{2n+2}\ar[d]\\
H_2(\mathrm{Sp}_{2g}^q(\bZ);\bZ) = E^2_{2,0} \ar[r]^-{d^2}& E^2_{0,1} =H_1(\bZ/\vert bP_{2n+2} \vert;\bZ) = \bZ/\vert bP_{2n+2} \vert.
}
\end{equation*}
The lower horizontal map is identified with
$$H_2(\mathrm{Sp}_{2g}^q(\bZ);\bZ) \overset{s}\lra \bZ \lra \bZ/\vert bP_{2n+2} \vert$$
so is surjective if $s = \mathrm{sign}/8$ is indivisible. In this case the right-hand
vertical map must also be surjective, and hence be an isomorphism, as
both groups are cyclic of the same order.

To show $s$ is indivisible, consider the maps
$$\mathrm{Sp}_{2g}(\bZ, 2) \lra \mathrm{Sp}_{2g}^q(\bZ) \lra \mathrm{Sp}_{2g}(\bZ)$$
from the level 2 congruence subgroup and to the full symplectic
group. Meyer has shown that the signature map $\mathrm{sign} : H_2(\mathrm{Sp}_{2g}(\bZ);\bZ) = \bZ \to \bZ$, defined as in Lemma~\ref{lem:Mu} (\ref{item:4}), has image $4\bZ$ as
long as $g \geq 3$ \cite[Satz 2]{Meyer}. Putman has shown that
$H_2(\mathrm{Sp}_{2g}(\bZ, 2);\bZ) \to H_2(\mathrm{Sp}_{2g}(\bZ);\bZ)$
has image $2\bZ$ \cite[Theorem F]{PutmanLevel} as long as $g \geq
4$. Thus the signature map restricted to the level 2 congruence
subgroup has image $8\bZ$, so in particular it hits $8 \in \bZ$ for $g
\geq 4$, and so the signature map restricted to
$\mathrm{Sp}_{2g}^q(\bZ)$ does too; hence $s$ hits $1 \in \bZ$.
\end{proof}


\bibliographystyle{amsalpha}
\bibliography{biblio}

\end{document}